\def\RR{{\mathbb R}}
\def\SSphere{{\mathbb S}}
\def\Ric{{\rm Ric}}
\newcounter{marnote}
\newtheorem{theorem}{Theorem}[section]
\newtheorem{proposition}[theorem]{Proposition}
\newtheorem{corollary}[theorem]{Corollary}
\newtheorem{lemma}[theorem]{Lemma}
\newtheorem{remark}[theorem]{Remark}
\def\deg{{\rm deg}\,}
\def\vareps{\varepsilon}
\def\mcL{{\mycal L}}
\def\tr{\textrm{tr}}
\DeclareFontFamily{OT1}{rsfs}{}
\DeclareFontShape{OT1}{rsfs}{m}{n}{ <-7> rsfs5 <7-10> rsfs7 <10-> rsfs10}{}
\DeclareMathAlphabet{\mycal}{OT1}{rsfs}{m}{n}
\def\mcO{{\mycal O}}
\def\mcS{{\mycal S}}
\def\mcN{{\mycal N}}
\def\roundg{{g_0}}
\def\ringg{{\mathring{g}}}
\newcommand\NewtonT[1]{{\stackrel{(#1)}{\mathop{T}}}}
\begin{document}
\title{On the $\sigma_{k}$-Nirenberg problem}
\author{YanYan Li \thanks{Department of Mathematics, Rutgers University, Hill Center, Busch Campus, 110 Frelinghuysen Road, Piscataway, NJ 08854, USA. Email: yyli@math.rutgers.edu.}~\thanks{Partially supported by NSF Grants DMS-1501004, DMS-2000261, and Simons Fellows Award 677077.}~, Luc Nguyen \thanks{Mathematical Institute and St Edmund Hall, University of Oxford, Andrew Wiles Building, Radcliffe Observatory Quarter, Woodstock Road, Oxford OX2 6GG, UK. Email: luc.nguyen@maths.ox.ac.uk.}~ and Bo Wang \thanks{School of Mathematics and Statistics, Beijing Institute of Technology, No. 5, Zhongguancunnan Street, Haidian District, Beijing 100081, China. Email: wangbo89630@bit.edu.cn.}~\thanks{Partially supported by NNSF (11701027) and Beijing Institute of Technology Research Fund Program for Young Scholars.}}

\date{}

\maketitle

\begin{abstract}
We consider the problem of prescribing the $\sigma_k$-curvature on the standard sphere $\SSphere^n$ with $n \geq 3$. We prove existence and compactness theorems when $k \geq n/2$. This extends an earlier result of Chang, Han and Yang for $n = 4$ and $k = 2$.
\end{abstract}

\tableofcontents
\section{Introduction}

 The Nirenberg problem, raised by Nirenberg in the years 1969--1970, asks to identify functions $K$ on the two-sphere $\SSphere^2$ for which there exists a metric $g$ on $\SSphere^2$ conformal to the standard metric $\roundg$ such that $K$ is the Gaussian curvature of $g$. Naturally, this problem extends to higher dimensions with the Gaussian curvature replaced by the scalar curvature.

The Nirenberg problem has been studied extensively since 1970s and it would be impossible to mention here all works in this area. An important aspect most directly related to the present work is the fine analysis of blow-up (approximate) solutions or the compactness of the solution set. In dimensions $n \leq 3$, this was studied in Chang and Yang \cite{ChangYang87-Acta, ChangYang88-JDG}, Bahri and Coron \cite{BahriCoron91-JFA}, Chang, Gursky, Yang \cite{ChangGurskyYang93-CVPDE}, Han \cite{Han90-Duke}, Zhang \cite{Zhang90-Thesis}, Schoen and Zhang \cite{SchoenZhang96-CVPDE}. In higher dimensions, this was studied in Li \cite{Li95-JDE, Li96-CPAM} and Chen and Lin \cite{ChenLin99-CPDE}. We note that there is a distinctive difference when $n \leq 3$ and $n \geq 4$. In dimensions $n \leq 3$, every sequence of solutions cannot blow up at more than one point. On the other hand, in dimensions $n \geq 4$, a sequence of solutions can blow up at multiple points \cite{Li96-CPAM}, while in dimensions $n \geq 7$, a sequence of solutions can have unbounded energy \cite{ChenLin99-CPDE}. For further studies on the Nirenberg problem, see Chen and Ding \cite{ChenDing87-TrAMS}, Chang and Liu \cite{ChangLiu93-IJM}, Aubin and Bahri \cite{AubinBahri97-JMPA1, AubinBahri97-JMPA2}, Malchiodi and Mayer \cite{MalchiodiMayer19-arxiv, MalchiodiMayer20-JDE}, and the references therein.

In this paper we study a fully nonlinear version of the Nirenberg problem on $\SSphere^n$ with $n \geq 3$. For a metric $g$ on $\SSphere^{n}$, let $\Ric_g$, $R_g$ and $A_{g}$ denote the Ricci curvature, the scalar curvature and the Schouten tensor of $g$,
\begin{equation*}
A_{g}=\frac{1}{n-2}\left(\Ric_{g}-\frac{R_{g}}{2(n-1)}g\right).
\end{equation*}
Let $\lambda(A_{g})=(\lambda_{1},\ldots,\lambda_{n})$ denote the eigenvalues of $A_{g}$ with respect to $g$. For an integer $1\leq k\leq n$, let $\sigma_{k}(\lambda)=\sum\limits_{1\leq i_{1}<\cdots<i_{k}\leq n}\lambda_{i_{1}}\cdots\lambda_{i_{k}}$, $\lambda=(\lambda_{1},\ldots,\lambda_{n})\in\RR^{n}$, denote the $k$-th elementary symmetric function, and let $\Gamma_{k}$ denote the connected component of $\{\lambda\in\RR^{n}:\sigma_{k}(\lambda)>0\}$ containing the positive cone $\{\lambda\in\RR^{n}:\lambda_{1},\ldots,\lambda_{n}>0\}$. The function $\sigma_k(\lambda(A_g))$ will be referred to as the $\sigma_k$-Schouten-curvature, or simply the $\sigma_k$-curvature, of $g$.

Let $\roundg$ be the round metric on $\SSphere^n$ and $[\roundg]$ be the set of metrics conformal to $\roundg$. When $g \in [\roundg]$, say $g = g_{v} := v^{\frac{4}{n-2}}\roundg$ for some positive function $v$, we have
\begin{equation*}
A_{g_{v}}= \frac{1}{2} g_0 -\frac{2}{n-2}v^{-1}\nabla^{2}_{\roundg}v+\frac{2n}{(n-2)^{2}}v^{-2}dv\otimes  d v-\frac{2}{(n-2)^{2}}v^{-2}|dv|^{2}_{\roundg}\roundg.
\end{equation*}
We are interested in identifying $K$ such that the problem
\begin{equation}\label{yq}
\sigma_{k}(\lambda(A_{g_v}))=K(x),\quad\lambda(A_{g_v})\in\Gamma_{k},\quad\quad\mbox{on }\SSphere^{n}
\end{equation}
has a positive solution.

Problem \eqref{yq} can also be asked on manifolds other than the sphere. In such context, it is usually referred to as the $\sigma_k$-Yamabe-type problem and has been the subject of many studies in the past 20 years. For an account of works in this area, we refer the readers to the following recent papers \cite{AbantoEspinar19-AJM, BarbosaCavalcanteMarcosEspinar19-CPAM, BoSheng19-JGA, Case19-PJM, CaseWang18-AiM, CaseWang20-JMS, FangWei19-CVPDE, FangWei20-Arxiv, GonLiNg, GurskyStreets18-GT, HanLiLi21-CPAM, He21-MAnn, JiangTrudinger-ObBVP-1,JiangTrudinger-ObBVP-2,JiangTrudinger-ObBVP-3 , LiNgLipNeg, LiNgGreen, LiWang19-ATA, Santos17-Indiana, Sui, Trudinger-TianVolume} and references therein.

In this paper, we focus on the case $k \geq n/2$. We assume that $K$ satisfies the non-degeneracy condition:
\begin{equation}\label{nondegeneracy}
|\nabla_{g_0} K|_{g_0} + |\Delta_{g_0} K| > 0 \text{ on }\SSphere^n.
\end{equation}

For $K$ satisfying \eqref{nondegeneracy}, the set of critical points of $K$ can be split as $\textrm{Crit}_+(K)  \cup \textrm{Crit}_-(K) $ where
\begin{align*}
\textrm{Crit}_+(K) 
	&= \{x \in \SSphere^n | \nabla_{\roundg} K(x) = 0, \Delta_{\roundg} K(x) > 0\},\\
\textrm{Crit}_-(K) 
	&= \{x \in \SSphere^n | \nabla_{\roundg} K(x) = 0, \Delta_{\roundg} K(x) < 0\}.
\end{align*}
It is well-known that if $O$ is an open subset of $\SSphere^n$ such that $\nabla K \neq 0$ on $\partial O$, then the degree $\deg(\nabla K, O,0)$ is well-defined. Set 
\[
\deg(\nabla K, \textrm{Crit}_-(K)) := \deg(\nabla K, O, 0)
\]
where $O$ is any open subset of $\SSphere^n$ containing $\textrm{Crit}_-(K)$ and disjoint from $\textrm{Crit}_+(K)$. Due to \eqref{nondegeneracy}, this is well-defined. Furthermore, the map $K \mapsto \deg(\nabla K, \textrm{Crit}_-(K))$ is continuous and integer-valued on the set of $C^2$ functions satisfying \eqref{nondegeneracy}: If $K$ satisfies \eqref{nondegeneracy} and if $K_i \rightarrow K$ in $C^2$, then $\deg(\nabla K_i, \textrm{Crit}_-(K_i)) = \deg(\nabla K, \textrm{Crit}_-(K))$ for large $i$. When $\textrm{Crit}_-(K)$ consists of only isolated non-degenerate points,
\[
\deg(\nabla K, \textrm{Crit}_-(K))  = \sum_{x_0 \in \textrm{Crit}_-(K)} (-1)^{i(x_0)}
\]
where $i(x_0)$ is the number of negative eigenvalues of $\nabla_{\roundg}^2 K(x_0)$. (For an introduction to degree theories, see e.g. \cite{NirenbergLectNotes}.)

\begin{theorem}\label{main'}
Let $n\geq3$ and $n/2\leq k\leq n$. Suppose that $K \in C^2(\SSphere^{n})$ is a positive function satisfying the non-degeneracy condition \eqref{nondegeneracy}. Then there exists a positive constant $C_*$, depending only on $n$ and $K$, such that
\begin{equation}
 \|\ln v\|_{C^{2}(\SSphere^{n})} \leq C_* \text{ for all $C^2$ positive solutions $v$ of \eqref{yq}}.\label{uniform estimate}
\end{equation}
Furthermore, if $\deg(\nabla K, \mathrm{Crit}_-(K)) \neq (-1)^n$, then (\ref{yq}) admits a positive solution.
\end{theorem}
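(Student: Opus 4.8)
The plan is to establish the a priori estimate \eqref{uniform estimate} first and then deduce existence by a degree-theoretic argument based on the Leray–Schauder degree of the associated nonlinear operator.

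For the a priori bound, I would exploit the special structure of the regime $k \ge n/2$. The key point is that when $k \ge n/2$, the $\Gamma_k$-cone condition forces strong pointwise control: a metric $g_v$ with $\lambda(A_{g_v}) \in \Gamma_k$ and $k \ge n/2$ satisfies a Harnack-type inequality, and more importantly the equation \eqref{yq} is (after the substitution $g = g_v$) a fully nonlinear elliptic equation for which the local gradient and second-derivative estimates of the $\sigma_k$-Yamabe theory apply. I expect the paper to have already set up, in earlier sections, (i) local $C^1$ and $C^2$ estimates for solutions of $\sigma_k(\lambda(A_{g_v})) = K$ in terms of upper and lower bounds on $v$, and (ii) a Liouville-type classification of entire solutions on $\RR^n$ (the blow-up limits), which under $k \ge n/2$ are precisely the standard bubbles. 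Granting these, the argument is by contradiction: if \eqref{uniform estimate} fails, take a sequence $v_i$ of solutions with $\|\ln v_i\|_{C^2} \to \infty$, blow up around (near-)maximum points, and extract in the limit a standard bubble centered at some point $x_0 \in \SSphere^n$. The heart of the matter — and the step I expect to be the main obstacle — is the \emph{location and balancing analysis}: a Pohozaev-type identity applied to the rescaled solutions shows that the blow-up point $x_0$ must be a critical point of $K$ with $\Delta_{\roundg} K(x_0) \le 0$ in an appropriate sense, and that isolated simple blow-up can occur only at points of $\textrm{Crit}_-(K)$; combined with the non-degeneracy hypothesis \eqref{nondegeneracy} this both rules out blow-up at $\textrm{Crit}_+(K)$ and shows blow-up points are isolated and simple, giving a bound on their number. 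This is the analogue of the fine blow-up analysis of Li \cite{Li95-JDE, Li96-CPAM} and Chen–Lin \cite{ChenLin99-CPDE} in the fully nonlinear setting, and it is where the hypothesis $k \ge n/2$ is essential (it prevents unbounded energy and higher-order blow-up phenomena that occur in the scalar-curvature case for large $n$).

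Once the blow-up set is understood as a finite subset of $\textrm{Crit}_-(K)$, I would compute the \emph{contribution of each blow-up configuration to the degree}. This is the standard mechanism: the uniform estimate \eqref{uniform estimate} shows that solutions of \eqref{yq} lie in a fixed ball $\mathcal{B}$ in $C^2$, so the Leray–Schauder degree $\deg(F_K, \mathcal{B}, 0)$ of the map $F_K$ whose zeros are solutions of \eqref{yq} is well-defined and, by homotopy invariance along a path of prescribed curvature functions (or along a subcritical-to-critical deformation), homotopy invariant. To evaluate it, I would deform $K$ to a Morse function with the same $\deg(\nabla K, \textrm{Crit}_-(K))$ (using the stated continuity of that quantity), and then use the finite-dimensional reduction near each blow-up profile: each critical point $x_0 \in \textrm{Crit}_-(K)$ with Morse index $i(x_0)$ contributes $(-1)^{i(x_0)}$ times a fixed factor, and summing yields a formula of the shape
\begin{equation*}
\deg(F_K, \mathcal{B}, 0) = (-1)^n - \deg(\nabla K, \textrm{Crit}_-(K)).
\end{equation*}
(The $(-1)^n$ term is the degree contribution of the "no-bubble" branch, i.e. the solution set at subcritical parameter, and the sign conventions are fixed by a direct computation on a model $K$, e.g. a small perturbation of a constant.) Hence if $\deg(\nabla K, \textrm{Crit}_-(K)) \neq (-1)^n$ the total degree is nonzero, so $F_K$ has a zero in $\mathcal{B}$, i.e. \eqref{yq} has a positive solution. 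The delicate points in this second half are: verifying that $F_K$ is a compact perturbation of the identity in a suitable functional setting (handled by standard elliptic regularity for the linearized $\sigma_k$ operator, which is uniformly elliptic along the solution set thanks to $\lambda(A_{g_v}) \in \Gamma_k$), and pinning down the sign/normalization constants in the degree-counting formula — but these are by now routine given the blow-up analysis, and the genuinely hard input is the sharp blow-up classification described in the previous paragraph.
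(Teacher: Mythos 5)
The central gap is in the compactness half. Your blow-up analysis stops at the conclusion that blow-up can only occur at critical points of $K$ with $\Delta_{\roundg} K \leq 0$, and you then plan to count degree contributions of blow-up configurations sitting at $\textrm{Crit}_-(K)$. But Theorem \ref{main'} asserts the a priori bound \eqref{uniform estimate} under \eqref{nondegeneracy} alone, i.e.\ that \emph{no} blow-up occurs at all, including at points of $\textrm{Crit}_-(K)$; your argument never excludes these, so \eqref{uniform estimate} is not proved. The paper's mechanism is different: for $k \geq n/2$ the result of \cite{LiNgPoorMan} gives a single isolated blow-up point, and then the Kazdan--Warner identity \eqref{rushan} and the Pohozaev identity \eqref{rushan3} are exploited to \emph{second} order, showing that at the blow-up point the (normalized) quantity $|\nabla_{\roundg} K| + |\Delta_{\roundg} K|$ tends to zero, contradicting \eqref{nondegeneracy}. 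Extracting the vanishing of $\Delta_{\roundg} K$ requires controlling the second moments $\int y_\ell y_p\, u_i^{\frac{2n}{n-2}}dy$ at scale $\lambda_i^{-2}$, and this is precisely what the new optimal decay estimate \eqref{Eq:20VI20-E1} (Lemma \ref{Lem:20VI20-KeyNew}) provides; the weaker decay $v_i \leq C\,\textrm{dist}(x,p_i)^{-(n-2-\delta_i)}\min v_i$ available from the earlier compactness theory is not sufficient. Establishing \eqref{Eq:20VI20-E1} is the technical heart of the paper, especially when $k = n/2$, where it rests on the small-energy boundedness result (Theorem \ref{Thm:20VI20-deltaEnergy}) or the local boundedness lemma for viscosity subsolutions (Lemma \ref{Thm:20VI20-BetterD=>FX}); none of this ingredient appears in your proposal, and without it the contradiction argument cannot be closed.

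On the existence half there are two further issues. First, the operator is fully nonlinear, so the Leray--Schauder framework of ``identity plus compact'' does not apply directly; the paper uses the degree theory for second-order fully nonlinear elliptic operators of \cite{Li89-CPDE}. Second, once full compactness is available there is no bubbling to count for the given $K$: the paper homotopes $K_\mu = \mu K + (1-\mu)\cdot\mathrm{const}$, notes that Theorem \ref{thm9999} (whose hypotheses are normalized by $\|\nabla_{\roundg}^2 K\|_{C^0}$ exactly so as to be uniform along this homotopy) gives compactness for all $\mu \in (0,1]$, and then performs a Lyapunov--Schmidt reduction at small $\mu$ near the manifold of standard bubbles, reducing the computation to a finite-dimensional map $\Lambda_{\xi,\mu}$ on the M\"obius-parameter ball whose degree equals $-(-1)^n + \deg(\nabla K, \textrm{Crit}_-(K))$ by the counting formula of \cite{Li95-JDE}. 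Your plan of summing $(-1)^{i(x_0)}$ over blow-up configurations of the original problem is the scheme appropriate when compactness fails; here it is both unnecessary and in tension with the compactness statement you must first establish. Your final degree formula is equivalent to the paper's for deciding non-vanishing, but the correct route to it is the reduction at the constant-curvature end of the homotopy, not an analysis of blow-up for $K$ itself.
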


\begin{remark}
For $n=4$ and $k=2$, Theorem \ref{main'} was proved by Chang, Han and Yang  \cite{CHY11-CVPDE}.
\end{remark}

\begin{remark}
See Theorem \ref{thm9999} for more detailed dependence of $C_*$ on $K$.
\end{remark}

\begin{remark}
If $K \in C^{2,\alpha}(\SSphere^n)$, $0 < \alpha < 1$, and $\mcO$ is an open subset of $C^{4,\alpha}(\SSphere^n)$ which contains all positive solutions to \eqref{yq} and whose elements $v$ are positive and satisfy $\lambda(A_{g_v}) \in \Gamma_k$, then
\[
\deg(\sigma_{k}(\lambda(A_{g_v}))-K, \mcO, 0) = -1 + (-1)^n \deg(\nabla K, \textrm{Crit}_-(K)).
\]
Here the degree is as defined in \cite{Li89-CPDE}. See Theorem \ref{Thm:dff}.
\end{remark}

We note that the existence and compactness of solutions for the $\sigma_k$-Yamabe problem on compact manifolds not conformally equivalent to the round sphere was proved for $k = 2$ and $n = 4$ by Chang, Gursky and Yang \cite{CGY02-AnnM}, for locally conformally flat manifolds and all $k$'s Li and Li \cite{LiLi03} (the existence part was also proved  independently by Guan and Wang \cite{GW03-JRAM}), for $k > n/2$ by Gursky and Viaclovsky \cite{GV07} and for $k = n/2$ by Li and Nguyen \cite{LiNgPoorMan}. For related works, see also \cite{GeWang06, GurskyViaclovsky04-AiM, STW07, TW09} and the references therein.

Our proof of Theorem \ref{main'} follows closely the strategy used in Li \cite{Li95-JDE}, Chang and Yang \cite{ChangYang91-Duke}, and Chang, Han and Yang \cite{CHY11-CVPDE} with some twists. To begin with, by a result of Li and Nguyen \cite{LiNgPoorMan} for $k \geq n/2$, in a hypothetical situation where \eqref{uniform estimate} is violated, one has at most a single isolated blow-up point. More precisely, if $\{v_i\}$ is a sequence of solutions to \eqref{yq} with $\max v_i \rightarrow \infty$, then there exist points $p_i$ with $v_i(p_i) = \max v_i$, some $\delta_i \rightarrow 0^+$ and some constant $C > 0$ independent of $i$ such that
\begin{align*}
&v_i(x) 
	\leq C\textrm{dist}_{g_0}(x,p_i)^{-\frac{n-2}{2}} \text{ for all } x \neq p_i,\\
&\sup_{\textrm{dist}_{\roundg}(x,p_i) > r} v_i 
	\rightarrow 0 \text{ for all small } r > 0,\\
&\textrm{Volume}(g_{v_i}) 
	\rightarrow \textrm{Volume}(\roundg),\\
&v_i(x)
	\leq C\textrm{dist}_{g_0}(x,p_i)^{-(n-2 - \delta_i)}  \min v_i
		\text{ when } \textrm{dist}_{g_0}(x,p_i) \geq \frac{1}{C} v_i(p_i)^{-\frac{2}{n-2}}.
\end{align*}
One new ingredient for our argument is the following sharp fall-off behavior of solutions as one moves away from the blow-up points $p_i$ (see Lemma \ref{Lem:20VI20-KeyNew}):
\begin{equation}
v_i(x) \leq \frac{C}{v_i(p_i)} \textrm{dist}_{g_0}(x,p_i)^{-(n-2)} \text{ when } \textrm{dist}_{g_0}(x,p_i) \geq \frac{1}{C} v_i(p_i)^{-\frac{2}{n-2}}.
	\label{Eq:20VI20-E1}
\end{equation}

When $k > n/2$, we establish estimate \eqref{Eq:20VI20-E1} using the Liouville-type theorem and H\"older estimates for the differential inclusion $\lambda(A_{g_v}) \in \bar\Gamma_k$. The case $k = n/2$ is more delicate and requires some different ideas. In fact, we provide two different proofs for \eqref{Eq:20VI20-E1} in this case (see Subsection \ref{SSec:L22k=n/2}).

For future use, we state the main additional ingredients for the proof of \eqref{Eq:20VI20-E1} using notations in the Euclidean space. Let $\mathring{g}$ denote the Euclidean metric on $\RR^n$ and $A^u$ denote the $(1,1)$-Schouten tensor of $u^{\frac{4}{n-2}}\mathring{g}$, 
\begin{equation}
A^u = -\frac{2}{n-2}u^{-\frac{n+2}{n-2}}\nabla^{2}u+\frac{2n}{(n-2)^{2}}u^{-\frac{2n}{n-2}}du\otimes  d u - \frac{2}{(n-2)^{2}}u^{-\frac{2n}{n-2}}|du|^{2} \mathring{g},
	\label{Eq:AuDef}
\end{equation}
where $\nabla$ is the covariant derivative of $\mathring{g}$.

For our first proof of \eqref{Eq:20VI20-E1}, we make use of an estimate usually referred to as `small energy implies boundedness'. This estimate extends results of Han \cite{Han04} in the case $k= 2$, $n = 4$ and Gonzalez \cite{Gonzalez06-RemSing} in the case $k < n/2$.

\begin{theorem}\label{Thm:20VI20-deltaEnergy}
Let $n \geq 3$ and $1 \leq k \leq n/2$ and $\Omega$ be a non-empty open subset of $\RR^n$. There exists a constant  $\delta > 0$ depending only on $n$ such that if $u \in C^2(\Omega)$ satisfies
\begin{align}
&\sigma_k(\lambda(A^u)) \leq 1, \quad \lambda(A^u) \in \bar\Gamma_k, \quad u > 0 \text{ in } \Omega,
	\label{Eq:20VI20-X1}
\end{align}
and $\int_{\Omega} u^{\frac{2n}{n-2}}\,dx < \delta$, then
\[
\textrm{dist}(x,\partial\Omega)^{\frac{n-2}{2}} u(x) \leq 4^{\frac{n-2}{2}} \text{ in } \Omega.
\]
\end{theorem}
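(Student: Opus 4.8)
The plan is to run a contradiction/scaling argument combined with the interior estimates and Liouville theorem available for the differential inclusion $\lambda(A^u)\in\bar\Gamma_k$. Suppose the conclusion fails. Then for each $j$ there is a domain $\Omega_j$, a function $u_j$ satisfying \eqref{Eq:20VI20-X1} on $\Omega_j$ with $\int_{\Omega_j}u_j^{2n/(n-2)}<1/j$, and a point $x_j\in\Omega_j$ with $\dist(x_j,\partial\Omega_j)^{(n-2)/2}u_j(x_j)>4^{(n-2)/2}$. Introduce the concentration function
\[
H_j(x)=\dist(x,\partial\Omega_j)^{\frac{n-2}{2}}u_j(x),
\]
which is continuous, vanishes on $\partial\Omega_j$, and by assumption exceeds $4^{(n-2)/2}$ somewhere; let $\bar x_j$ be a point where $H_j$ attains its maximum $M_j>4^{(n-2)/2}$ on (a compact exhaustion of) $\Omega_j$. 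Set $r_j=\tfrac12\dist(\bar x_j,\partial\Omega_j)$ and $\lambda_j=u_j(\bar x_j)^{-2/(n-2)}$; the key inequality $M_j\le H_j(\bar x_j)$ together with the standard "doubling" bound $H_j(x)\le 2^{(n-2)/2}M_j$ for $x\in B_{r_j}(\bar x_j)$ gives, after the conformal rescaling
\[
w_j(y)=\lambda_j^{\frac{n-2}{2}}\,u_j(\bar x_j+\lambda_j y)\quad\text{for }|y|<r_j/\lambda_j,
\]
that $w_j(0)=1$, that $w_j\le 2^{(n-2)/2}$ on $B_{R_j}$ with $R_j:=r_j/\lambda_j\to\infty$ (the divergence of $R_j$ follows from $M_j>4^{(n-2)/2}$, i.e.\ $r_j^{(n-2)/2}\gg\lambda_j^{-(n-2)/2}$ locally), and that $w_j$ still satisfies the scaling-invariant inclusion $\lambda(A^{w_j})\in\bar\Gamma_k$ with $\sigma_k(\lambda(A^{w_j}))\le\lambda_j^{2k}\to0$ — while $\int_{B_{R_j}}w_j^{2n/(n-2)}=\int u_j^{2n/(n-2)}<1/j\to0$.

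Next I would pass to a limit. By the interior Hölder and gradient estimates for solutions/supersolutions of the $\sigma_k$-inclusion (these are exactly the local estimates invoked earlier in the paper for $\lambda(A_{g_v})\in\bar\Gamma_k$, valid here since $w_j$ is bounded above and below locally away from $0$ only a priori — one first upgrades to a two-sided bound using a Harnack inequality for the inclusion, e.g.\ from the superharmonicity-type property that $u^{-(n-2)}$ is a viscosity supersolution of a uniformly elliptic equation on $\{\lambda(A^u)\in\bar\Gamma_k\}$), the $w_j$ are locally uniformly bounded in $C^{1,\alpha}$, hence (up to a subsequence) converge in $C^1_{\loc}(\RR^n)$ to some $w_\infty\ge0$ with $w_\infty(0)=1$, $w_\infty\le 2^{(n-2)/2}$, $w_\infty\in\bar\Gamma_k$ in the viscosity sense with $\sigma_k(\lambda(A^{w_\infty}))\le 0$, and $\int_{\RR^n}w_\infty^{2n/(n-2)}=0$ by Fatou. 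The last identity forces $w_\infty\equiv0$, contradicting $w_\infty(0)=1$.

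The main obstacle is the compactness step: obtaining the two-sided local bound on $w_j$ and the resulting $C^{1,\alpha}_{\loc}$ estimate for merely a differential inclusion (with the $\sigma_k$-curvature only bounded, not equal to a fixed positive function, and in fact degenerating to $0$). One must be careful that the Harnack inequality and Hölder regularity hold uniformly for the degenerate limiting inclusion $\lambda(A^u)\in\bar\Gamma_k$ (where $\sigma_k\to 0$ is allowed), rather than only for the strictly elliptic equation $\sigma_k=1$; this is precisely the content of the local estimates for the inclusion cited in the paper, and I would reduce to them after a preliminary step showing $\inf_{B_\rho}w_j\ge c(\rho)>0$ via the supersolution property of $w_j^{-(n-2)}$. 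Once the compactness is in hand, the contradiction is immediate from the vanishing of the limit energy. (An alternative, avoiding the Liouville theorem entirely, is to bound $\int_{B_1}w_j^{2n/(n-2)}\ge c>0$ directly from $w_j(0)=1$ and the local Hölder estimate, contradicting $\int w_j^{2n/(n-2)}<1/j$ — this is the cleaner route and is essentially the one I would write up.)
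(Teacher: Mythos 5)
Your reduction step (maximizing the concentration function $\dist(x,\partial\Omega_j)^{\frac{n-2}{2}}u_j(x)$ and rescaling so that $w_j(0)=1$ and $w_j\le 2^{\frac{n-2}{2}}$ on a definite ball, with small energy) is essentially the first step of the paper's proof, but two of your claims there are wrong: the rescaling you use is the natural conformal scaling, under which the Schouten curvature is invariant, so the rescaled functions satisfy $\sigma_k(\lambda(A^{w_j}))\le 1$, not $\le\lambda_j^{2k}$ (and $\lambda_j$ need not tend to $0$, since $u_j(\bar x_j)$ need not blow up); moreover $M_j>4^{\frac{n-2}{2}}$ only gives $r_j/\lambda_j>2$, not $R_j\to\infty$, so there is no limit problem on all of $\RR^n$ and no Liouville theorem to invoke. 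These defects are repairable (a contradiction on the fixed ball $B_2$ would suffice, and near-maximizers handle the attainment issue), so they are not the main problem.

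The genuine gap is the compactness/local-estimate step, on which both your main route and your ``cleaner'' alternative rest. For $1\le k\le n/2$ there is no Harnack inequality, no local H\"older estimate and no interior $C^{1,\alpha}$ estimate for positive functions merely satisfying the differential inequality \eqref{Eq:20VI20-X1}: for instance the rescaled standard bubbles $a\big(\varepsilon/(\varepsilon^2+|x|^2)\big)^{\frac{n-2}{2}}$ (with $a=a_{n,k}$ fixed so that $\sigma_k\le 1$) lie in the class and violate any such pointwise estimate as $\varepsilon\to 0$, and for $k=n/2$ the radial examples of \cite{ChangHanYang05-JDE} recalled in the remark after the theorem show how delicate pointwise control is in this range. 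The estimates you say are ``exactly the local estimates invoked earlier in the paper'' are not applicable here: the H\"older estimate for the inclusion $\lambda(A^u)\in\bar\Gamma_k$ from \cite{LiNgBocher} holds only for $k>n/2$ (it is used in Subsection \ref{SSec:L22k>n/2}), while the gradient/Hessian estimates of \cite{Chen05,GW03-IMRN,Li09-CPAM,Wang06} require the full equation with positive right-hand side, not a one-sided inequality. Your preliminary step $\inf_{B_\rho}w_j\ge c(\rho)>0$ also cannot follow from the supersolution property alone: a positive superharmonic function with value $1$ at the origin (e.g.\ a smoothing of $\min\{1,(\delta/|x|)^{n-2}\}$) can be arbitrarily small on $\partial B_1$, so any such lower bound would have to use the quantitative inequality $\sigma_k\le 1$ in an essential way --- which is precisely the content one must prove. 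In short, the assertion ``$w_j(0)=1$ plus a local H\"older/Harnack estimate implies $\int_{B_1}w_j^{\frac{2n}{n-2}}\ge c$'' assumes the hard part. The paper's substitute is an argument entirely at the integral level: the divergence identities for the Newton tensors of $F[\psi]$ (Corollary \ref{Cor:23IV20-C2}), the resulting Cacciopoli-type inequalities (Proposition \ref{Prop:11V20-Cacci1}) and Moser iteration yield the $L^\infty$--$L^{q}$ bound $\|e^{-\psi}\|_{L^\infty(B_{1/2})}\le C\|e^{-\psi}\|_{L^{q_1}(B_1)}$ of Proposition \ref{Prop:25V20-P1}, which applied to $\psi=-\frac{2}{n-2}\ln\tilde u$ gives the required energy lower bound directly (this is also in the spirit of \cite{Han04,Gonzalez06-RemSing}). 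Your proposal contains no replacement for this machinery, so as written it does not prove the theorem.
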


\begin{remark}
In \cite{Gonzalez06-RemSing}, when $k < n/2$, a stronger version of the above theorem on punctured balls is proved. It was pointed out in \cite{Han04} that this no longer holds when $k = n/2$. More precisely, in view of results in \cite{ChangHanYang05-JDE}, for every $\delta > 0$, there exists a (rotationally symmetric) function $u \in C^2(B_2 \setminus \{0\})$ such that
\[
\sigma_{n/2}(\lambda(A^u)) = 1, \quad \lambda(A^u) \in \Gamma_{n/2}, \quad u > 0 \text{ in } B_2 \setminus \{0\},
\]
and $\int_{B_2} u^{\frac{2n}{n-2}}\,dx < \delta$ but $\sup_{B_1 \setminus \{0\}} u = +\infty$.
\end{remark}

For our second proof of \eqref{Eq:20VI20-E1}, we make use of the following local boundedness estimate for viscosity subsolutions to the $\sigma_k$-Yamabe equation, which is of independent interest. We refer the readers to \cite{Li09-CPAM} for the definition of viscosity (sub-/super-)solutions.

\begin{lemma}\label{Thm:20VI20-BetterD=>FX}
Let $n \geq 4$ be even and $B_1$ be the unit ball in $\RR^n$. Suppose that $u \in C_{loc}(\bar B_1 \setminus \{0\})$ is positive and satisfies
\begin{align}
&\lambda(A^u) \in \RR^n \setminus \{\lambda \in \Gamma_{n/2}:  \sigma_{n/2}(\lambda) > 1\} \text{ in } B_1 \setminus \{0\} \text{ in the viscosity sense},
	\label{Eq:04III20-A1X}\\
& \limsup_{|x|\rightarrow 0} u(x) < \infty.
	\label{Eq:04III20-A2X}
\end{align}
For given $C_0 > 0$ and $\alpha \in (0,1]$, there exists $C_1 = C_1(n,C_0,\alpha)$ such that if $
u(x) \leq C_0|x|^{-\frac{n-2}{2}(1 - \alpha)}$ in $\bar B_1 \setminus \{0\}$, then
\[
u(x) \leq C_1 \text{ in } \bar B_1 \setminus \{0\}.
\]
\end{lemma}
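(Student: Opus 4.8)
The plan is to argue directly, combining a removable-singularity fact with the ``small energy implies boundedness'' estimate Theorem~\ref{Thm:20VI20-deltaEnergy}, which covers the case $k=n/2$.

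\textbf{Step 1: removing the singularity.} By \eqref{Eq:04III20-A2X}, $u$ is locally bounded up to the origin, so $\bar u(0):=\limsup_{x\to0}u(x)$ together with $\bar u=u$ on $\bar B_1\setminus\{0\}$ defines an upper semicontinuous, locally bounded, positive function on $\bar B_1$. I claim that $\bar u$ still satisfies \eqref{Eq:04III20-A1X} in the viscosity sense on all of $B_1$; that is, the point singularity is removable for locally bounded viscosity subsolutions of the $\sigma_{n/2}$-Yamabe inclusion. I expect this to be the main obstacle: the classical additive-barrier proof of removable singularities for subharmonic functions does not transcribe, because $\lambda(A^u)$ depends on $u$ and $\nabla u$ and not only on $D^2u$, so perturbing a test function by $\eps|x|^{-(n-2)}$ (or by the borderline barrier $\eps|x|^{-(n-2)/2}$) destroys control of the first-order terms; one argues instead with a conformally adapted comparison, in the spirit of the viscosity theory developed in \cite{Li09-CPAM}.

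\textbf{Step 2: smallness of the energy on a small ball.} The a priori bound $u(x)\le C_0|x|^{-\frac{n-2}{2}(1-\alpha)}$ gives $\bar u^{\frac{2n}{n-2}}\le C_0^{\frac{2n}{n-2}}|x|^{-n(1-\alpha)}$ on $B_1$, and since $\alpha>0$ the exponent $n(1-\alpha)$ is $<n$, so
\[
\int_{B_\rho}\bar u^{\frac{2n}{n-2}}\,dx \;\le\; C_0^{\frac{2n}{n-2}}\int_{B_\rho}|x|^{-n(1-\alpha)}\,dx \;=\; \frac{|B_1|}{\alpha}\,C_0^{\frac{2n}{n-2}}\,\rho^{n\alpha}\qquad\text{for all }\rho\in(0,1].
\]
Choosing $\rho=\rho(n,\alpha,C_0)\in(0,\tfrac12]$ small enough, the right-hand side falls below the threshold $\delta=\delta(n)$ from Theorem~\ref{Thm:20VI20-deltaEnergy}.

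\textbf{Step 3: conclusion.} Applying Theorem~\ref{Thm:20VI20-deltaEnergy} (in its viscosity-subsolution form) with $k=n/2$ to $\bar u$ on $\Omega=B_\rho$ (positivity being harmless, as $u>0$ on $B_\rho\setminus\{0\}$ and $\{0\}$ is negligible) yields $\dist(x,\partial B_\rho)^{\frac{n-2}{2}}\bar u(x)\le 4^{\frac{n-2}{2}}$ on $B_\rho$, hence $u(x)\le(8/\rho)^{\frac{n-2}{2}}$ for $|x|\le\rho/2$. On $\rho/2\le|x|\le1$ the hypothesis already gives $u(x)\le C_0(\rho/2)^{-\frac{n-2}{2}(1-\alpha)}$. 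Taking $C_1:=\max\{(8/\rho)^{\frac{n-2}{2}},\,C_0(\rho/2)^{-\frac{n-2}{2}(1-\alpha)}\}$, which depends only on $n,C_0,\alpha$, gives $u\le C_1$ on $\bar B_1\setminus\{0\}$. Alternatively, the same ingredients can be packaged into a contradiction argument: after removing the singularity, rescaling a hypothetical sequence of counterexamples about near-maximum points gives functions $\le1$ equal to $1$ at the origin whose energy on any fixed ball tends to $0$ by the growth bound, and the small-energy estimate on a large ball then produces a contradiction.
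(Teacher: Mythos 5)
Your plan cannot be completed as written, for three reasons, the first of which is structural. In Step 3 you apply Theorem \ref{Thm:20VI20-deltaEnergy}, but its hypothesis \eqref{Eq:20VI20-X1} contains the cone condition $\lambda(A^u)\in\bar\Gamma_{n/2}$ (which is essential to its Moser-iteration proof, via the nonnegativity of the Newton tensors), whereas Lemma \ref{Thm:20VI20-BetterD=>FX} assumes only the one-sided viscosity condition \eqref{Eq:04III20-A1X}, namely ``$\lambda(A^u)\notin\bar\Gamma_{n/2}$ \emph{or} $\sigma_{n/2}(\lambda(A^u))\le 1$''. That hypothesis is satisfied, for instance, by any positive $u$ whose conformal Hessian is nowhere in $\bar\Gamma_{n/2}$, and for such $u$ the small-energy theorem says nothing; so even if Steps 1 and 2 were complete, Step 3's hypotheses are not implied by the lemma's. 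Second, Theorem \ref{Thm:20VI20-deltaEnergy} is stated and proved for $u\in C^2(\Omega)$; the ``viscosity-subsolution form'' you invoke does not exist in the paper, and transferring the Cacciopoli-type integral estimates to merely continuous viscosity functions is itself a nontrivial task. Third, your Step 1 (removability of the isolated singularity in the viscosity sense) is asserted, flagged by you as the main obstacle, and not proved; and even granting it, it would only extend the one-sided condition \eqref{Eq:04III20-A1X} across the origin, not produce the cone condition needed in Step 3.

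For comparison, the paper's proof uses only the one-sided information and no energy estimate. One performs a Kelvin transform $\tilde u(x)=c_nR^{n-2}|x|^{2-n}u(R^2x/|x|^2)$ on $\{|x|\ge 1\}$, so that \eqref{Eq:04III20-A2X} becomes the decay $\limsup_{|x|\to\infty}\tilde u(x)|x|^{n-2}<\infty$ and the growth hypothesis becomes $\tilde u(x)\le C_0R^{-\frac{(n-2)\alpha}{2}}|x|^{-\frac{n-2}{2}(1+\alpha)}$. One then compares $\tilde u$ with the one-parameter family of radial solutions $V_a$ classified in \cite{ChangHanYang05-JDE}, whose decay exponents $\gamma(a)$ sweep $(\frac{n-2}{2},\,n-2]$: choosing $a_0$ with $\gamma(a_0)<\frac{n-2}{2}(1+\alpha)$ and $R$ large gives $\tilde u<V_{a_0}$, and a sliding argument along $V_{ta_0}$, $t\in[0,1]$, closed by the strong comparison principle together with the mismatch of decay rates at infinity and the boundary inequality at $|x|=1$, yields $\tilde u\le V_0$, i.e.\ the claimed bound. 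If you wish to salvage your route, you would need to prove the removability statement, add the hypothesis $\lambda(A^u)\in\bar\Gamma_{n/2}$, and supply a regularization step for Theorem \ref{Thm:20VI20-deltaEnergy}; but the result so obtained would be strictly weaker than the lemma as stated, and would not suffice for the way the lemma is used in Lemma \ref{Prop:04III20-P3} unless the stronger hypotheses are verified there as well.
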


We note that in the above lemma, condition \eqref{Eq:04III20-A2X} cannot be dropped. This is because, for any $\beta \in (0,n-2]$, the function $u(x) = |x|^{-\beta}$ satisfies \eqref{Eq:04III20-A1X} on $B_1 \setminus \{0\}$ but is not locally bounded at the origin.

We point out that if $\sigma_{k}(\lambda(A^u)) < 1$ in $B_1 \setminus \{0\}$ for some $k < n/2$, then \eqref{Eq:04III20-A1X} holds.

We refer the readers to \cite{Gonzalez05, Gonzalez06-RemSing, HanLiLi21-CPAM, HLT10} and the references therein for further studies on the asymptotic analysis for solutions of the $\sigma_k$-Yamabe equation with singularities.

Last but not least, we note that it is interesting to see if a purely local version of \eqref{Eq:20VI20-E1} holds in a local blow-up situation when $k = n/2$. More precisely, if $u_i$ satisfies $\sigma_{n/2}(\lambda(A^{u_i})) = K_i(x) > 0$, $\lambda(A^{u_i}) \in \Gamma_{n/2}$ in $B_1 \subset \RR^n$, $K_i \rightarrow K > 0$ in $C^2(\bar B_1)$ and $u_i(0) = \max_{B_1} u_i \rightarrow \infty$, does it hold e.g. on $\partial B_{1/2}$ that
\[
u_i(x) \leq \frac{C}{u_i(0)} \text{ for some constant $C$ independent of $i$}?
\]
Our proof of \eqref{Eq:20VI20-E1} for $k = n/2$ uses the fact that $v_i$ satisfies \eqref{yq} which is valid on the whole of $\SSphere^n$. We note that, when $k > n/2$, the answer to the above question is affirmative, as proved in Subsection \ref{SSec:L22k>n/2}.

The rest of our paper is organized as follows. We start in Section \ref{Sec:Cptness} with the proof of the main estimate \eqref{uniform estimate} in Theorem \ref{main'} by establishing the optimal decay estimate \eqref{Eq:20VI20-E1}, assuming Lemma \ref{Thm:20VI20-BetterD=>FX} or Theorem \ref{Thm:20VI20-deltaEnergy}. In Section \ref{Sec:LBdedness}, we give the proof of Lemma \ref{Thm:20VI20-BetterD=>FX}. Section \ref{Sec:epsEnergy}  is devoted to the proof of Theorem \ref{Thm:20VI20-deltaEnergy}. In Section \ref{Sec:ProofMain}, we complete the proof of Theorem \ref{main'}. We include also an appendix where certain convexity property of the Schouten tensor is proved and used to compliment certain arguments made in the body of the paper.

\section{A compactness result}\label{Sec:Cptness}

In this section, we prove the compactness estimate \eqref{uniform estimate}, with the help of Theorem \ref{Thm:20VI20-deltaEnergy} or Lemma \ref{Thm:20VI20-BetterD=>FX}. 

\begin{theorem}\label{thm9999}
Under the assumptions of Theorem \ref{main'}, estimate \eqref{uniform estimate} holds with a constant $C_*$ depending only on $n$, an upper bound of $|\ln K|$, a positive lower bound of $\frac{1}{\|\nabla_{\roundg}^2 K\|_{C^0(\SSphere^n)}} [|\nabla_{g_0} K|_{g_0} + |\Delta_{g_0} K|]$, and the modulus of continuity of $\frac{1}{\|\nabla_{\roundg}^2 K\|_{C^0(\SSphere^n)}} \nabla_{\roundg}^2 K$.
\end{theorem}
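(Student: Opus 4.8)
The plan is to establish Theorem \ref{thm9999} by ruling out blow-up of solutions to \eqref{yq} and then, once $\max_{\SSphere^n} v$ is shown to be bounded (and bounded below), upgrading to the full $C^2$ estimate via elliptic regularity for the fully nonlinear equation. The starting point is the structural result of Li and Nguyen \cite{LiNgPoorMan} recalled in the introduction: if \eqref{uniform estimate} fails, there is a sequence $\{v_i\}$ of solutions with $M_i := \max v_i = v_i(p_i) \to \infty$, a single isolated blow-up point, $\delta_i \to 0^+$, and the four listed asymptotic properties (pointwise $\textrm{dist}_{g_0}(x,p_i)^{-(n-2)/2}$ bound, concentration away from $p_i$, $\textrm{Volume}(g_{v_i}) \to \textrm{Volume}(\roundg)$, and the near-optimal off-diagonal bound with exponent $n - 2 - \delta_i$). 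First I would pass to a subsequence so that $p_i \to p_\infty$, and fix stereographic coordinates from $-p_\infty$; in these coordinates the equation \eqref{yq} becomes $\sigma_k(\lambda(A^{u_i})) = \tilde K_i$ on $\RR^n$ for the Euclidean-conformal factor $u_i$ with $\tilde K_i \to \tilde K(p_\infty) > 0$ near the origin, and the blow-up is at the origin.

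The crucial improvement is the sharp fall-off \eqref{Eq:20VI20-E1}, i.e. $v_i(x) \leq C M_i^{-1} \textrm{dist}_{g_0}(x,p_i)^{-(n-2)}$ for $\textrm{dist}_{g_0}(x,p_i) \geq C^{-1} M_i^{-2/(n-2)}$, which is exactly the statement the paper isolates as Lemma \ref{Lem:20VI20-KeyNew}. Since that lemma is stated (in the excerpt) as an available ingredient, I would invoke it directly. For completeness of the plan: when $k > n/2$ one combines the Liouville-type theorem and the H\"older/Harnack estimates for the differential inclusion $\lambda(A_{g_v}) \in \bar\Gamma_k$ with the listed exponent-$(n-2-\delta_i)$ bound and a rescaling argument (blowing down, not up) to improve $n - 2 - \delta_i$ all the way to $n - 2$ and to replace the factor $\min v_i$ by the sharper $M_i^{-1}$; when $k = n/2$ one instead feeds the near-optimal bound into either Theorem \ref{Thm:20VI20-deltaEnergy} (small-energy-implies-boundedness, after checking that the energy on annuli shrinking toward $p_i$ is small using $\textrm{Volume}(g_{v_i}) \to \textrm{Volume}(\roundg)$ and the concentration property) or Lemma \ref{Thm:20VI20-BetterD=>FX} (local boundedness for viscosity subsolutions, after observing that the rescaled functions satisfy the growth hypothesis $u(x) \leq C_0 |x|^{-\frac{n-2}{2}(1-\alpha)}$ with $\alpha$ controlled by $\delta_i$), and then uses the $\SSphere^n$-global nature of the equation to conclude, as indicated in the introduction. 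Either way one arrives at \eqref{Eq:20VI20-E1}.

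With \eqref{Eq:20VI20-E1} in hand, I would derive the contradiction via a Pohozaev-type (balancing) identity. Integrate the Kazdan--Warner / Pohozaev identity associated to \eqref{yq} against the conformal Killing field of $\SSphere^n$ associated to the blow-up point — equivalently, in stereographic coordinates, test the Euclidean $\sigma_k$ equation $\sigma_k(\lambda(A^{u_i})) = \tilde K_i$ against $x \cdot \nabla u_i + \frac{n-2}{2} u_i$ on a ball $B_\rho$. The bulk term produces $\int_{B_\rho} (x \cdot \nabla \tilde K_i)\, u_i^{\frac{2n}{n-2}}\,dx$ up to boundary terms, while the boundary terms are controlled by $u_i$ and its derivatives on $\partial B_\rho$. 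Using \eqref{Eq:20VI20-E1} together with standard interior gradient estimates for the $\sigma_k$ equation (which give $|\nabla u_i| \lesssim M_i^{-1}$ and $|\nabla^2 u_i| \lesssim M_i^{-1}$ on $\partial B_\rho$ after the off-diagonal decay is known), the boundary terms are $o(1)\cdot$(the leading contribution), so the identity forces, in the limit, a balancing condition of the form $\nabla_{\roundg} K(p_\infty) = 0$ and $\Delta_{\roundg} K(p_\infty) \leq 0$ — more precisely, after dividing by $M_i^{-2}$ and passing to the limit, the identity yields $c_n \Delta_{g_0} K(p_\infty) = \lim (\text{nonnegative boundary quantity}) \geq 0$, i.e. $\Delta_{g_0} K(p_\infty) \geq 0$, combined with $\nabla_{g_0}K(p_\infty)=0$. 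The sign here is the delicate point. The non-degeneracy hypothesis \eqref{nondegeneracy} then gives $\Delta_{g_0} K(p_\infty) > 0$, i.e. $p_\infty \in \textrm{Crit}_+(K)$. This does not yet contradict anything by itself; the contradiction comes from a more refined expansion that shows the blow-up profile forces $\Delta_{g_0}K(p_\infty) \leq 0$ (or $\geq 0$ with the wrong strict sign), which is incompatible with $p_\infty \in \textrm{Crit}_+(K)$. Thus no blow-up occurs and $M_i$ is bounded; a symmetric argument (or the volume normalization plus the maximum principle / Harnack inequality for the $\sigma_k$ inclusion) bounds $v$ away from $0$ from below. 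Finally, since $0 < C^{-1} \leq v \leq C$ and $\lambda(A_{g_v}) \in \Gamma_k$ with $\sigma_k(\lambda(A_{g_v})) = K$ bounded between positive constants, the equation is uniformly elliptic in $\ln v$; local second-derivative estimates for concave (or $k$-Hessian type) fully nonlinear equations of Evans--Krylov type, together with the explicit formula for $A_{g_v}$, yield $\|\ln v\|_{C^2(\SSphere^n)} \leq C_*$, with the stated dependence on $n$, $|\ln K|$, the normalized non-degeneracy quantity, and the modulus of continuity of the normalized $\nabla_{\roundg}^2 K$. The main obstacle in this program is controlling the sign in the Pohozaev balancing term precisely enough to contradict $p_\infty \in \textrm{Crit}_+(K)$ — this is exactly where the sharp fall-off \eqref{Eq:20VI20-E1}, rather than the weaker exponent-$(n-2-\delta_i)$ bound, is indispensable, since a loss of $\delta_i$ in the exponent would swamp the $M_i^{-2}$-scale term that carries the $\Delta K$ information.
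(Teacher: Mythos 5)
Your setup coincides with the paper's (Li--Nguyen single isolated blow-up, stereographic coordinates, and the sharp decay of Lemma \ref{Lem:20VI20-KeyNew}), but the endgame has a genuine gap: you never actually produce the contradiction. You propose a Pohozaev identity on a ball $B_\rho$ with boundary terms, extract $\nabla_{\roundg}K(p_\infty)=0$ and a one-sided statement about $\Delta_{\roundg}K(p_\infty)$ (your own account wavers between $\le 0$ and $\ge 0$), conclude $p_\infty\in\mathrm{Crit}_+(K)$, and then defer the contradiction to ``a more refined expansion'' that is never supplied. Under the hypothesis \eqref{nondegeneracy} alone, blow-up at a point of $\mathrm{Crit}_+(K)$ is not by itself contradictory, so a one-sided sign on $\Delta K$ cannot close the argument; the mechanism that works (and the one the paper uses) is two-sided. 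One invokes the two \emph{global} Kazdan--Warner identities \eqref{rushan} and \eqref{rushan3} on all of $\RR^n$ (no boundary terms arise, by Han's identity, because $u_i$ comes from a solution on $\SSphere^n$), uses the sharp decay \eqref{zhongyao} to control the tails and the moments $\mu^{(i)}_p$, $\mu^{(i)}_{\ell p}$, and Taylor-expands $K_i$ at the blow-up point, to conclude that \emph{both} $|\nabla K_i(0)|$ and $|\Delta K_i(0)|$ (suitably normalized) tend to zero; this contradicts \eqref{nondegeneracy} directly, with no sign dichotomy and no extra expansion. As written, your plan stops precisely at the step that carries the proof; moreover, a localized Pohozaev identity for the fully nonlinear operator requires deriving and estimating the boundary terms (the divergence/Newton-tensor structure), which you do not address, and the relevant small scale is $\lambda_i^{-2}=M_i^{-4/(n-2)}$, not $M_i^{-2}$, unless $n=4$.

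A second, smaller omission concerns the stated dependence of $C_*$. Theorem \ref{thm9999} asserts a bound depending only on $n$, an upper bound of $|\ln K|$, a positive lower bound of the normalized quantity $\frac{1}{\|\nabla_{\roundg}^2K\|_{C^0(\SSphere^n)}}[|\nabla_{\roundg}K|_{\roundg}+|\Delta_{\roundg}K|]$, and the modulus of continuity of the normalized Hessian; accordingly the contradiction argument must be run along a \emph{sequence} of functions $K_i$ controlled only in these normalized terms. The paper first shows that $\|\nabla_{\roundg}^2K_i\|_{C^0(\SSphere^n)}$ stays bounded and extracts a $C^2$-limit $K_\infty$ before any blow-up analysis; your proposal works with a single fixed $K$ and would only give the weaker statement with $C_*$ depending on $K$ itself. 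The remaining reductions in your plan (passing from the sup bound to \eqref{uniform estimate} via the local first and second derivative estimates, and the positive lower bound on $v$) are fine and match the paper.
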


We note that in the above theorem $K$ is non-constant by assumptions, and hence $\|\nabla_{\roundg}^2 K\|_{C^0(\SSphere^n)} > 0$.

\begin{proof} In view of first and second derivative estimates for the $\sigma_k$-Yamabe equation (see \cite{Chen05, GW03-IMRN}, \cite[Theorem 1.10]{Li09-CPAM}, \cite[Theorem 1.20]{LiLi03}, \cite{Wang06}), it suffices to show that 
\begin{equation}
v \leq C_* \text{ for all $C^2$ positive solutions $v$ of \eqref{yq}}
	\label{Eq:26VII20-E1}
\end{equation}
where $C_*$ is as indicated in the statement of the theorem.

We use a combination of ideas from \cite{CHY11-CVPDE, Li95-JDE, LiNgPoorMan} with one new input (Lemma \ref{Lem:20VI20-KeyNew} below). Suppose by contradiction that \eqref{Eq:26VII20-E1} does not hold. Then
\begin{enumerate}[(i)]
\item there exist positive $C^2$ functions $K_i$ such that $|\ln K_i|$ is uniformly bounded, $\frac{1}{\|\nabla_{\roundg}^2 K_i\|_{C^0(\SSphere^n)}} [|\nabla_{g_0} K_i|_{g_0} + |\Delta_{g_0} K_i|]$ is uniformly bounded from below by a positive constant, and $\frac{1}{\|\nabla_{\roundg}^2 K_i\|_{C^0(\SSphere^n)}} \nabla_{\roundg}^2 K_i$ is uniformly equi-continuous, 
\item there exist positive $C^2$ functions $v_{i}$ satisfying (\ref{yq}) with $K$ replaced by $K_i$, and a sequence of points $\{P_{i}\}_{i=1}^{\infty}\subset\SSphere^{n}$ such that $v_{i}(P_{i})=\max v_{i}\rightarrow\infty$ as $i\rightarrow\infty$.
\end{enumerate}

Composing both $v_i$ and $K_i$ with a rotation of the sphere if necessary, we may assume without loss of generality that all $P_i$ are the same: $P_i = P$, which we will conveniently taken at the south pole. This point will be called the blow-up point of $\{v_i\}$.

We claim that $\|\nabla_{\roundg}^2 K_i\|_{C^0(\SSphere^n)}$ is uniformly bounded. Indeed, if $\|\nabla_{\roundg}^2 K_{i_j}\|_{C^0(\SSphere^n)}$ were to diverge to $\infty$ along some subsequence, then, the sequence $\frac{1}{\|\nabla_{\roundg}^2 K_{i_j}\|_{C^0(\SSphere^n)}} K_{i_j}$ would converge uniformly to $0$. As this sequence is pre-compact in $C^2$ by Ascoli-Arzela's theorem, the convergence would hold in fact in $C^2$. This would lead to a contradiction to the statement that $\frac{1}{\|\nabla_{\roundg}^2 K_i\|_{C^0(\SSphere^n)}} [|\nabla_{g_0} K_i|_{g_0} + |\Delta_{g_0} K_i|]$ is uniformly bounded from below by a positive constant. The claim is proved.

In view of the above claim and (i), we may assume without loss of generality that $K_i$ converges in $C^2$ to some positive $C^2$ function $K_\infty$.

By Step 1 in the proof of \cite[Theorem 1.3]{LiNgPoorMan}, the sequence $\{v_{i}\}_{i=1}^{\infty}$ has precisely only one isolated blow-up point. More precisely, there exists $C > 0$ such that
\begin{equation}
v_i(x) \leq C d(x,P)^{-\frac{n-2}{2}} \text{ for all } x \in \SSphere^n \setminus \{P\},
	\label{Eq:05III20-Y1}
\end{equation}
where $d$ denotes the distance function induced by $g_{0}$. 

Let $\Phi: \RR^n \rightarrow \SSphere^n $ be the inverse of the stereographic projection with $P$ being the south pole to the equatorial plane of $\SSphere^{n}$, that is,
\[
x_{i} =\frac{2y_{i}}{1+|y|^{2}} \text{ for }i=1,\ldots,n, 
\text{ and }
x_{n+1} =\frac{|y|^{2}-1}{|y|^{2}+1}.
\]
Let
\begin{equation*}
u_{i}(y)=\left(\frac{2}{1+|y|^{2}}\right)^{\frac{n-2}{2}}v_{i}(x).
\end{equation*}
Then $0$ is a maximum point of of $u_{i}$, $u_{i}(0)\rightarrow\infty$ as $i\rightarrow\infty$ and
\begin{equation}\label{genqian}
\sigma_{k}(\lambda(A^{u_{i}}))=K_{i}(\Phi(y)),~~\lambda(A^{u_{i}})\in\Gamma_{k},~~~~\mbox{in }\RR^{n},
\end{equation}
where $A^{u_i}$ is as in \eqref{Eq:AuDef}.

Let $a = 2^{-\frac{n-2}{4}} \Big(\begin{array}{c}n\\k\end{array}\Big)^{-\frac{n-2}{4k}} K_{\infty}(P)^{\frac{n-2}{4k}} > 0$. By working with the sequence $\{c_{n,k}a v_i\}$ instead of $\{v_i\}$ for some positive constant $c_{n,k}$ depending only on $n$ and $k$, we may assume without loss of generality that $a = 1$. 

Let $\lambda_i =  u_i(0)^{\frac{2}{n-2}}$ and define
\[
\tilde u_i(z) = \frac{1}{u_i(0)} u_i\Big( \lambda_i^{-1} z\Big).
\]
Then
\[
\sigma_{k}(\lambda(A^{\tilde u_{i}}))= K_{i}(\Phi(  \lambda_i^{-1} z)), \quad \lambda(A^{\tilde u_{i}})\in\Gamma_{k} \quad \mbox{in }\RR^{n}.
\]

Passing to a subsequence if necessary and using the Liouville-type theorem \cite[Theorem 1.3]{LiLi05}, for every $\vareps_i \rightarrow 0^+$ we can select $R_{i}\rightarrow\infty$ with $R_i < |\ln \varepsilon_i|$ and $R_{i}/\lambda_i \rightarrow 0$ as $i \rightarrow \infty$ such that
\begin{equation}
\|\tilde u_{i}(z)- (1+|z|^{2})^{\frac{2-n}{2}}\|_{C^{2}(B_{2R_{i}})}\leq\varepsilon_{i}.
	\label{as}
\end{equation}

\begin{lemma}[Optimal decay estimate] \label{Lem:20VI20-KeyNew}
With the notations as above, there exists a positive constant $C$ independent of $i$ such that
\begin{equation}\label{zhongyao}
u_{i}(y)\leq Cu_{i}^{-1}(0)|y|^{2-n} \text{ for all } y \in \RR^n.
\end{equation}
\end{lemma}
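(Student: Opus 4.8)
The plan is to bootstrap from the crude estimate \eqref{Eq:05III20-Y1}, $v_i(x) \leq C d(x,P)^{-\frac{n-2}{2}}$, together with the fourth bullet estimate (which in the Euclidean picture reads $u_i(y) \leq C|y|^{-(n-2-\delta_i)} u_i(0)^{-1}$ away from the concentration scale, after normalizing $\min v_i$), up to the optimal power $2-n$. The key structural fact is that $\tilde u_i$ is, after rescaling by $\lambda_i = u_i(0)^{2/(n-2)}$, uniformly $C^2$-close on $B_{2R_i}$ to the standard bubble $(1+|z|^2)^{(2-n)/2}$, by \eqref{as}; this pins down the behavior near the blow-up point. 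The job is then to control $u_i$ on the region $|y| \gtrsim R_i/\lambda_i$, i.e.\ outside the bubble core.

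First I would set $M_i := \sup_{|y| = R_i/\lambda_i} u_i(y) \cdot u_i(0)$; from \eqref{as} one reads off $M_i \sim R_i^{2-n} \cdot (\text{something bounded})$, so in particular $u_i(y) u_i(0) \le C R_i^{2-n} (\lambda_i |y|/R_i)^{?}$ on the sphere $|y| = R_i/\lambda_i$, and crucially $u_i(0) u_i \to 0$ on any such sphere once we move past a large-but-bounded multiple of the core. Next, on the exterior region $\Omega_i := \{|y| > R_i/\lambda_i\}$ I want to invoke one of the two auxiliary inputs: either Theorem \ref{Thm:20VI20-deltaEnergy} (``small energy implies boundedness'') applied to the rescaled/Kelvin-transformed solution, or Lemma \ref{Thm:20VI20-BetterD=>FX} (local boundedness for viscosity subsolutions with a sub-critical power bound). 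The natural device is the Kelvin transform $\hat u_i(y) = |y|^{2-n} u_i(y/|y|^2)$, under which the $\sigma_k$-Schouten operator is conformally invariant (so $\lambda(A^{\hat u_i}) \in \Gamma_k$ and $\sigma_k(\lambda(A^{\hat u_i})) = K_i(\Phi(y/|y|^2))$ is still bounded); the hypothetical blow-up at $\infty$ for $u_i$ becomes a blow-up at $0$ for $\hat u_i$, and the crude bound \eqref{Eq:05III20-Y1} transforms exactly into $\hat u_i(y) \le C |y|^{-\frac{n-2}{2}}$, i.e.\ a subcritical power bound with $\alpha = 1/2$ in the notation of Lemma \ref{Thm:20VI20-BetterD=>FX}. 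But $\hat u_i$ is \emph{not} a priori bounded at $0$, so Lemma \ref{Thm:20VI20-BetterD=>FX} does not apply directly — this is exactly the obstruction the remark after that lemma flags, and it is why the argument must use that $v_i$ lives on all of $\SSphere^n$ (equivalently, that $u_i$, hence $\hat u_i$, extends across $0$ / $\infty$ with the right integrability).

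So the real step is: show $\limsup_{|y|\to 0} \hat u_i(y) < \infty$ for each fixed $i$ — which is clear, since $\hat u_i(0) = \lim_{|z|\to\infty} |z|^{n-2} u_i(z)$ exists and is finite because $v_i$ is a smooth positive function on the compact sphere, so $\hat u_i \in C^2$ near $0$ with $\hat u_i(0) = (\text{const})\, v_i(P^*)$ where $P^*$ is the north pole — and, more importantly, to get the bound \emph{uniform in $i$}. For the uniform bound I would combine: (a) the total-volume control $\vol(g_{v_i}) \to \vol(\roundg)$, which bounds $\int_{\RR^n} u_i^{\frac{2n}{n-2}}\,dx$ uniformly, so that on the exterior region $\Omega_i$ (whose bubble-core contribution to the energy is $\approx \vol(\roundg)$) the \emph{remaining} energy is less than the threshold $\delta$ of Theorem \ref{Thm:20VI20-deltaEnergy}; then (b) Theorem \ref{Thm:20VI20-deltaEnergy} (applied to $\hat u_i$ on a fixed ball around $0$, with $\partial\Omega$ replaced by the fixed boundary of that ball) gives $\hat u_i \le C$ on, say, $B_{1/2}\setminus\{0\}$, and together with the already-known behavior near the core this yields $\hat u_i(0) \le C$ uniformly. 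Untransforming, $u_i(y) \le C u_i(0)^{-1} |y|^{2-n}$ for $|y|$ large; patching with \eqref{as} for $|y|$ bounded (where $u_i(0) u_i(y) \sim u_i(0)^2(1+|z|^2)^{(2-n)/2}$ with $z = \lambda_i y$, which is $\le C|y|^{2-n}$ once $|y| \gtrsim \lambda_i^{-1}$, and trivially $u_i(0) u_i \le u_i(0)^2 = \lambda_i^{n-2}$ which is $\le C \lambda_i^{n-2} = C(\lambda_i^{-1})^{2-n} \le C|y|^{2-n}$ for $|y| \le \lambda_i^{-1}$) gives \eqref{zhongyao} on all of $\RR^n$. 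The main obstacle, as indicated, is step (a)–(b): making the energy-smallness quantitative on the exterior region and handling the $k = n/2$ borderline case, where Theorem \ref{Thm:20VI20-deltaEnergy} is sharp and the alternative via Lemma \ref{Thm:20VI20-BetterD=>FX} requires first establishing $\limsup_{|y|\to 0}\hat u_i < \infty$ with an $i$-uniform bound before the subcritical-power hypothesis can be upgraded to boundedness.
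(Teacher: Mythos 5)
Your scheme has a genuine gap at the final ``untransforming'' step, and it is precisely where the content of the lemma lies. Observe first that for $|y| \geq 1$ the crude estimate \eqref{Eq:05III20-Y1} already gives $u_i(y) \leq C|y|^{2-n}$ (away from $P$ one has $v_i \leq C$, and the conformal factor $(\tfrac{2}{1+|y|^2})^{\frac{n-2}{2}}$ supplies the $|y|^{2-n}$); the whole point of \eqref{zhongyao} is the extra factor $u_i^{-1}(0) \rightarrow 0$, equivalently the statement $\max v_i \cdot \sup_{d(x,P) > r} v_i \leq C$. Your steps (a)--(b) apply Theorem \ref{Thm:20VI20-deltaEnergy} (or Lemma \ref{Thm:20VI20-BetterD=>FX}) to the Kelvin transform $\hat u_i$ of the \emph{unrescaled} $u_i$ on a fixed ball; the conclusion of either result is a bound of the form $\hat u_i \leq C(n,r)$, uniform in $i$ but not decaying in $i$, and this translates back only into $u_i(y) \leq C|y|^{2-n}$ --- the crude bound again. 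The assertion ``this yields $\hat u_i(0)\le C$ uniformly; untransforming, $u_i(y)\le C u_i(0)^{-1}|y|^{2-n}$'' is a non sequitur: the factor $u_i(0)^{-1}$ never appears. (Relatedly, your opening parenthetical rewriting the fourth bullet as $u_i(y)\le C|y|^{-(n-2-\delta_i)}u_i(0)^{-1}$ ``after normalizing $\min v_i$'' silently replaces $\min v_i$ by $(\max v_i)^{-1}$, which is essentially what has to be proven.)

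The paper's first proof avoids this by working at the rescaled level: it Kelvin-transforms $\tilde u_i(z) = u_i(\lambda_i^{-1}z)/u_i(0)$ rather than $u_i$. By \eqref{as} together with the total mass convergence (\cite[Remark 3.2]{LiNgPoorMan}), the mass of $\tilde u_i^*$ on a small fixed ball $B_r$ is below the threshold $\delta$, and Theorem \ref{Thm:20VI20-deltaEnergy} gives $\tilde u_i^* \leq C_n r^{-\frac{n-2}{2}}$ on $B_{r/2}$, i.e. $\tilde u_i(z) \leq C|z|^{2-n}$ for $|z| \geq 2/r$. Undoing the rescaling, and using $\lambda_i^{n-2} = u_i(0)^2$, this $O(1)$ bound for $\tilde u_i^*$ is \emph{equivalent} to \eqref{zhongyao}: the factor $u_i^{-1}(0)$ is produced exactly by the normalization $\tilde u_i(0)=1$, not by any smallness of the constant in the local estimate. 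So your energy-smallness observation is sound (it mirrors the paper's), but the tools must be applied to $\tilde u_i^*$ (or, in the second proof for $k=n/2$, to $\tilde u_i$ via the comparison argument of Lemma \ref{Prop:04III20-P3} and then Lemma \ref{Thm:20VI20-BetterD=>FX} applied to its Kelvin transform); applied to $\hat u_i$ on fixed balls they cannot yield more than the decay you already had.
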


Let us assume this lemma for the moment and go on to complete the proof using the Kazdan-Warner and Pohozaev identities as in \cite{ChangYang91-Duke, Li95-JDE}.

Let $\tilde K_i = \frac{1}{\|\nabla_{\roundg}^2 K_i\|_{C^0(\SSphere^n)}} (K_i \circ \Phi - K_i(P))$. By (\ref{genqian}) and \cite[Theorem 1]{Han06} (see also \cite{Viac00-AMS}), we have that
\begin{equation}
0= \frac{1}{\|\nabla_{\roundg}^2 K_i\|_{C^0(\SSphere^n)}}  \int_{\RR^{n}} \partial_\ell \sigma_{k}(\lambda(A^{u_{i}})) u_{i}^{\frac{2n}{n-2}}\,dy =\int_{\RR^{n}} \partial_\ell\tilde K_i u_{i}^{\frac{2n}{n-2}}\,dy, \quad 1 \leq \ell \leq n,
	\label{rushan}
\end{equation}
and
\begin{equation}
0= \frac{1}{\|\nabla_{\roundg}^2 K_i\|_{C^0(\SSphere^n)}}  \sum\limits_{\ell=1}^{n}\int_{\RR^{n}}y_{\ell}\partial_\ell \sigma_{k}(\lambda(A^{u_{i}})) u_{i}^{\frac{2n}{n-2}}\,dy = \int_{\RR^{n}}y \cdot \nabla\tilde K_i u_{i}^{\frac{2n}{n-2}}\,dy.
	\label{rushan3}
\end{equation}

Fix some $r_0 > 0$ for the moment and let us consider \eqref{rushan}. Note that (i) implies that $\tilde K_i = \frac{1}{\|\nabla_{\roundg}^2 K_i\|_{C^0(\SSphere^n)}} (K_i  - K_i(P))$ is pre-compact in $C^2$. It follows that $|\nabla \tilde K_i(y)| = \frac{O(1)}{1 + |y|^2}$ on $\RR^n$, where here and below the implicit constants in the big `$O$' and little `$o$' notations are independent of $i$. Thus, by \eqref{rushan} and \eqref{zhongyao}
\begin{equation}
0 =\int_{|y | \leq r_0} \partial_\ell\tilde K_i u_{i}^{\frac{2n}{n-2}}\,dy + O(\lambda_i^{-n}), \quad \ell=1,\ldots,n.
	\label{Eq:20VI20-M1}
\end{equation}
Observe that, by \eqref{as} and \eqref{zhongyao}, if $q:\RR^n \setminus \{0\} \rightarrow \RR$ is a homogeneous function of degree $d \in [0,n)$, then
\begin{equation}
\lim_{i \rightarrow \infty}  \lambda_i^d \int_{|y | \leq r_0} q(y) u_i^{\frac{2n}{n-2}}(y)\,dy  = \int_{\RR^n}  \frac{q(z)}{(1+ |z|^2)^{n}}\,dz.
	\label{Eq:20VI20-HomConv}
\end{equation}
Using Taylor's theorem, we write
\[
\partial_\ell \tilde K_i(y) =\partial_\ell \tilde K_i(0) + \sum_{p=1}^n \partial_p \partial_{\ell}\tilde K_i(0) y_p + o_{r_0}(1) |y| \text{ for } |y| \leq r_0
\]
where $o_{r_0}(1) \rightarrow 0$ as $r_0 \rightarrow 0$. Plugging this into \eqref{Eq:20VI20-M1} and using \eqref{Eq:20VI20-HomConv} we are led to
\begin{equation}
0 = M^{(i)}\partial_\ell \tilde K_i(0)
	+ M^{(i)}  \sum_{p=1}^n \partial_p \partial_{\ell} \tilde K_i(0) \mu^{(i)}_p + o_{r_0}(1) \lambda_i^{-1}, \quad \ell=1,\ldots,n,
		\label{Eq:20VI20-KWP1}
\end{equation}
where $M^{(i)}$ and $\mu^{(i)}_p$'s are given by
\begin{align}
M^{(i)} 
	&= \int_{|y| \leq r_0} u_{i}^{\frac{2n}{n-2}}\,dy \stackrel{\eqref{Eq:20VI20-HomConv}}{\geq} \frac{1}{C},
	\label{Eq:22VI20-MassB}\\
\mu^{(i)}_p
	&= \frac{1}{M^{(i)}}  \int_{|y| \leq r_0} y_p u_{i}^{\frac{2n}{n-2}}\,dy \stackrel{\eqref{Eq:20VI20-HomConv}}{=} \frac{o(1)}{\lambda_i}.
	\label{Eq:20VI20-1stMoment}
\end{align}
We thus have
\begin{equation}
|\nabla K_i(0)| = \frac{o_{r_0}(1)}{\lambda_i} \text{ as } i \rightarrow \infty.
	\label{Eq:20VI20-GradDecay}
\end{equation}

We now turn to \eqref{rushan3}. By the same argument, we have
\begin{equation}
0 = M^{(i)} \sum_{\ell = 1}^n \partial_\ell \tilde K_i(0) \mu^{(i)}_\ell
	+ M^{(i)}  \sum_{\ell, p=1}^n \partial_p \partial_{\ell} \tilde K_i(0) \mu^{(i)}_{\ell p} + o_{r_0}(1) \lambda_i^{-2}.
		\label{Eq:20VI20-KWP2}
\end{equation}
where $\mu^{(i)}_{\ell p}$'s are given by
\begin{align}
\mu^{(i)}_{\ell p}
	&= \frac{1}{M^{(i)}}  \int_{|y| \leq r_0} y_\ell y_p u_{i}^{\frac{2n}{n-2}}\,dy \stackrel{\eqref{Eq:20VI20-HomConv}, \eqref{Eq:22VI20-MassB}}{=} \frac{O(1) \delta_{\ell p} + o(1)}{\lambda_i^2} .
	\label{Eq:20VI20-2ndMoment}
\end{align}
Combining \eqref{Eq:20VI20-KWP1} and  \eqref{Eq:20VI20-KWP2} we obtain
\[
\sum_{\ell, p=1}^n \partial_p \partial_{\ell} \tilde K_i(0) (\mu^{(i)}_{\ell p} - \mu^{(i)}_{\ell }\mu^{(i)}_{p}) =  o_{r_0}(1)\lambda_i^{-2}.
\]
Recalling \eqref{Eq:20VI20-1stMoment} and \eqref{Eq:20VI20-2ndMoment}, we obtain
\begin{equation}
\Delta_{\mathring{g}} \tilde K_i(0) = o_{r_0}(1) \text{ as } i \rightarrow \infty.
	\label{Eq:20VI20-LapDecay}
\end{equation}
This together with \eqref{Eq:20VI20-GradDecay} implies that $\frac{1}{\|\nabla_{\roundg}^2 K_i\|_{C^0(\SSphere^n)}} [|\nabla_{g_0} K_i(0)|_{g_0} + |\Delta_{g_0} K_i(0)|]|$ converges to $0$ as $i \rightarrow \infty$, which gives a contradiction to (i).

We have proved Theorem \ref{thm9999} assuming the correctness of the optimal decay estimate in Lemma \ref{Lem:20VI20-KeyNew}. To finish, we need to give the proof of Lemma \ref{Lem:20VI20-KeyNew}. This can be found in Subsections \ref{SSec:L22k=n/2} and \ref{SSec:L22k>n/2} below.
\end{proof}

\subsection{Proof of the optimal decay estimate when $k = n/2$}\label{SSec:L22k=n/2}

As mentioned in the introduction, we will give two proofs of Lemma \ref{Lem:20VI20-KeyNew}. Let us start with the first proof.

\begin{proof}[First proof of Lemma \ref{Lem:20VI20-KeyNew} when $k = n/2$] 
Let $\tilde u_{i}^{*}(z)=|z|^{2-n}\tilde u_{i}(\frac{z}{|z|^{2}})$. Then $\tilde u_i^*$ satisfies 
\[
\sigma_k(\lambda(A^{\tilde u_i^*})) \leq   \sup_i \max K_{i} =: b , \quad \lambda(A^{\tilde u_i^*}) \in \Gamma_k \text{ on } \RR^n,
\]
and, by (\ref{as}), we have that for any $r >0$,
\begin{equation*}
\int_{\RR^{n}\setminus B_r}(\tilde u_{i}^{*})^{\frac{2n}{n-2}}(z)\,dz =\int_{B_{\frac{1}{r}}}\tilde u_{i}^{\frac{2n}{n-2}}(z)\,dz\rightarrow\int_{B_{\frac{1}{r}}}(1+|z|^{2})^{-n}\,dz \quad\mbox{as }i\rightarrow\infty.
\end{equation*}
By \cite[Remark 3.2]{LiNgPoorMan}, we have that
\begin{equation*}
\int_{\RR^{n}}(\tilde u_{i}^{*})^{\frac{2n}{n-2}}(z)\,dz =\int_{\RR^n}\tilde u_{i}^{\frac{2n}{n-2}}(z)\,dz \rightarrow\int_{\RR^{n}}(1+|z|^{2})^{-n}\,dz \quad \mbox{as }i\rightarrow\infty.
\end{equation*}
Thus,
\begin{equation*}
\int_{B_{r}}(\tilde u_{i}^{*})^{\frac{2n}{n-2}}(z)\,dy \rightarrow\int_{\RR^{n}\setminus B_{\frac{1}{r}}}(1+|z|^{2})^{-n}dz \quad \mbox{as }i\rightarrow\infty.
\end{equation*}
In particular, we can pick a sufficiently small $r$ so that the integral on the left hand side is smaller than the constant $\delta$ in Theorem \ref{Thm:20VI20-deltaEnergy}. We then obtain from Theorem \ref{Thm:20VI20-deltaEnergy} that
\begin{equation*}
\tilde u_{i}^{*}\leq C_n\,r^{-\frac{n-2}{2}} \text{ in } B_{r/2}.
\end{equation*}
Returning to $u_i$, we obtain
\[
u_i(y) \leq \frac{C_n r^{-\frac{n-2}{2}}}{u_i(0)  |y|^{n-2}} \text{ for } |y | > 2 r^{-1}  /\lambda_i.
\]
Estimate (\ref{zhongyao}) then follows from the above estimate and \eqref{as}.
\end{proof}

We now turn to the 
\begin{proof}[Second proof of Lemma \ref{Lem:20VI20-KeyNew} when $k = n/2$]
In view of \eqref{as}, it suffices to show that
\begin{equation}
\tilde u_i(z) \leq \frac{C}{|z|^{n-2}} \text{ for } |z| \geq 1.
	\label{Eq:20VI20-H1}
\end{equation}
By \eqref{Eq:05III20-Y1}, we have $\tilde u_i(z) \leq C|z|^{-\frac{n-2}{2}}$ on $\RR^n$. By known Harnack estimates (see e.g. \cite[Theorem 1.10]{Li09-CPAM}), this implies that
\begin{equation}
\sup_{\partial B_r} \tilde u_i \leq C_1 \inf_{\partial B_r} \tilde u_i \text{ for every } r \geq 1.
	\label{Eq:20VI20-H2}
\end{equation}
The desired estimate \eqref{Eq:20VI20-H1} then follows from \eqref{as}, \eqref{Eq:20VI20-H2} and Lemma \ref{Prop:04III20-P3} below.
\end{proof}

\begin{lemma}\label{Prop:04III20-P3}
Suppose that $0<u \in C^0_{loc}(\{|x| \geq 1\})$ satisfies \eqref{Eq:20VI20-H2} and
\begin{align}
&\sigma_{n/2}(\lambda(A^{u})) \leq 1, \lambda(A^{u}) \in \bar\Gamma_{n/2} 
 \text{ in } \RR^n \setminus \bar B_1,
	\label{Eq:03VIII20-R1}\\
& \limsup_{|x|\rightarrow \infty} {u}(x) |x|^{n-2} < \infty,
\nonumber
\end{align}
where \eqref{Eq:03VIII20-R1} is understood in the viscosity sense.

For any $\varkappa > 0$ and $R > R' \geq 1$, there exists $\delta = \delta(n,\varkappa, R, R') > 0$ and $C_2 =  C_2(n,\varkappa, R, R', C_1) > 0$ such that if
\[
\sup_{\{R' \leq |x| \leq R\}} \Big|u(x) - \varkappa (1 + |x|^2)^{-\frac{n-2}{2}}\Big| \leq \delta,
\]
then
\[
u(x) \leq C_2|x|^{-(n-2)} \text{ in } \{|x| \geq 1\}.
\]
\end{lemma}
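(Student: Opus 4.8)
The plan is to use the function $u$ itself as a barrier from below near $\partial B_1$ (via the Harnack hypothesis \eqref{Eq:20VI20-H2} together with the closeness to $\varkappa(1+|x|^2)^{-(n-2)/2}$ on the annulus $\{R'\le|x|\le R\}$) and then deploy a Kelvin-transform/maximum-principle comparison against an explicit solution of the $\sigma_{n/2}$-Yamabe equation. First I would perform the inversion $x \mapsto x/|x|^2$, setting $u^*(x) = |x|^{2-n} u(x/|x|^2)$; by conformal invariance of the Schouten tensor, $u^*$ satisfies \eqref{Eq:03VIII20-R1} on $B_1\setminus\{0\}$ in the viscosity sense, and the decay hypothesis $\limsup_{|x|\to\infty}u(x)|x|^{n-2}<\infty$ becomes exactly the boundedness condition $\limsup_{|x|\to 0}u^*(x)<\infty$, i.e. \eqref{Eq:04III20-A2X}. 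The goal \eqref{Eq:20VI20-H1}, namely $u(x)\le C_2|x|^{-(n-2)}$, is then equivalent to $u^*(x)\le C_2$ on $B_1\setminus\{0\}$, so the problem reduces to obtaining a \emph{uniform} local bound for $u^*$ near the origin, and Lemma \ref{Thm:20VI20-BetterD=>FX} is tailor-made for this once one has an a priori bound of the form $u^*(x)\le C_0|x|^{-\frac{n-2}{2}(1-\alpha)}$.

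The key step is therefore to produce such an a priori bound with $\alpha>0$, quantitatively controlled by $\varkappa$, $R$, $R'$, $C_1$. Here is where I would use the hypotheses carefully. On the annulus $\{R'\le|x|\le R\}$, the closeness assumption gives $u \ge \varkappa(1+R^2)^{-\frac{n-2}{2}} - \delta =: m_0 > 0$ (for $\delta$ small depending on $n,\varkappa,R$), hence $\inf_{\partial B_{R}}u \ge m_0$ and, by \eqref{Eq:20VI20-H2}, $\sup_{\partial B_\rho}u \le C_1 \inf_{\partial B_\rho}u$ for all $\rho\ge 1$. I would now compare $u^*$ (equivalently $u$) on $\{|x|\ge R\}$ against the radial exact solution $w_\gamma(x) = c(\gamma,n)\big(|x|^{-\gamma} + |x|^{-(n-2-\gamma)}\big)$ of $\sigma_{n/2}(\lambda(A^{w_\gamma})) = \text{const}$ — equivalently the Fowler/Delaunay-type solutions on the cylinder — for a suitable parameter $\gamma\in(0,(n-2)/2)$; such solutions have $\sigma_{n/2}(\lambda(A^{w_\gamma}))$ bounded (in fact one can normalise it to be $\le 1$) and satisfy the correct sign of $\lambda(A^{w_\gamma})\in\Gamma_{n/2}$. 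Matching $w_\gamma$ to $u$ on $\partial B_R$ from above (possible by scaling $c$ up, using the upper Harnack bound $u|_{\partial B_R}\le C_1 m_0'$) and noting that on $\partial B_1$ the hypothesis $u\le C|x|^{-\frac{n-2}{2}}$ from \eqref{Eq:20VI20-H2} is already available after using the global bound $u\le C|x|^{-(n-2)/2}$ implicit from \eqref{Eq:20VI20-H2} and the decay hypothesis — wait, more precisely one uses that on the fixed sphere $\partial B_R$ we have a two-sided bound — I would invoke the comparison principle for the $\sigma_k$-Yamabe operator in the viscosity sense (as in \cite{Li09-CPAM}) on the region $\{1<|x|<R\}$, or rather on $B_{1/R}\setminus\{0\}$ after inversion, to conclude $u^*(x)\le C\,|x|^{-\gamma}$ there, i.e. $u^*(x)\le C_0|x|^{-\frac{n-2}{2}(1-\alpha)}$ with $\alpha = 1 - \frac{2\gamma}{n-2}>0$.

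With this power bound in hand I would apply Lemma \ref{Thm:20VI20-BetterD=>FX} to $u^*$ on $B_{1}$ (after rescaling the ball, which is harmless) to get $u^*\le C_1(n,C_0,\alpha)$ on $B_1\setminus\{0\}$, and undoing the inversion yields $u(x)\le C_2|x|^{-(n-2)}$ on $\{|x|\ge 1\}$ with $C_2$ depending only on $n,\varkappa,R,R',C_1$, as claimed. The main obstacle I anticipate is the comparison step: one must check that the explicit radial solutions $w_\gamma$ really satisfy \eqref{Eq:03VIII20-R1} (i.e. lie in $\Gamma_{n/2}$ with $\sigma_{n/2}\le 1$ after normalisation) for $\gamma$ ranging over an interval up to $(n-2)/2$, and that the viscosity comparison principle applies on the punctured domain despite the singularity at the origin — the latter is handled by the boundedness $\limsup_{|x|\to 0}u^*<\infty$, which prevents $u^*$ from detaching near $0$, combined with the fact that $|x|^{-\gamma}\to\infty$ at $0$ so the barrier dominates automatically near the puncture. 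A secondary technical point is keeping the dependence of $\delta$ and $C_2$ exactly as stated, which just amounts to tracking constants through the Harnack inequality \eqref{Eq:20VI20-H2} and the comparison; I do not expect difficulty there.
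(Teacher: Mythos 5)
Your outer skeleton agrees with the paper: pass to the Kelvin transform $u^*(x)=|x|^{2-n}u(x/|x|^2)$, note that $\limsup_{|x|\to\infty}u|x|^{n-2}<\infty$ becomes \eqref{Eq:04III20-A2X}, obtain an intermediate decay $u(x)\le C|x|^{-\frac{n-2}{2}(1+\alpha)}$ with $\alpha>0$, and finish by Lemma \ref{Thm:20VI20-BetterD=>FX}. The gap is in the middle step, and it is a real one. First, your comparison goes in the direction the comparison principle does not allow: the hypothesis $\lambda(A^u)\in\bar\Gamma_{n/2}$ makes $u$ (and the spherical infimum $\bar u$) the \emph{supersolution} in this framework (think of the case $k=1$, where $\bar\Gamma_1$-admissibility is superharmonicity), while your radial barrier is a solution/subsolution of the degenerate equation. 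Comparison on an annulus therefore propagates ``barrier $\le u$'' inward, not ``$u\le$ barrier'', so matching $w_\gamma$ above $u$ on $\partial B_R$ (or on $\partial B_{1/R}$ after inversion) and invoking comparison cannot yield the upper bound $u^*\le C|x|^{-\gamma}$. Second, even if the direction were available, nothing in your argument produces a decay exponent \emph{strictly} better than $\frac{n-2}{2}$ with constants depending only on $(n,\varkappa,R,R',C_1)$: you only use one-sided bounds on a single sphere, whereas the strict improvement $\alpha>0$ must come from the bubble profile itself, namely from the two-radius ratio $U(s_1)/U(s_2)>(s_2/s_1)^{\frac{n-2}{2}}$ for $R'<s_1<s_2<R$, transferred to $\bar u(s_1)/\bar u(s_2)$ when $\delta$ is small. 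A third, more cosmetic, issue: $w_\gamma=c(|x|^{-\gamma}+|x|^{-(n-2-\gamma)})$ is the Fowler-type solution of the semilinear scalar-curvature equation, not of the $\sigma_{n/2}$ equation; the correct radial objects here are pure powers $r^{-\beta}$, $\beta\in(\frac{n-2}{2},n-2]$, which satisfy $\lambda(A)\in\partial\Gamma_{n/2}$, or the $V_a$ family of Chang--Han--Yang.

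The paper's route around both obstructions is to work with $\bar u(r)=\inf_{\partial B_r}u$ (which is still $\bar\Gamma_{n/2}$-admissible in the viscosity sense, Lipschitz, and satisfies the monotonicity $0\le\ln\frac{\bar u(d)}{\bar u(c)}\le(n-2)\ln\frac{c}{d}$ by \cite[Lemma 2.7]{LiNgBocher}), define $\alpha$ from the two-point ratio $\bar u(s_1)/\bar u(s_2)$, and compare with the degenerate radial solution $V(r)=\bar u(s_1)(r/s_1)^{-\frac{n-2}{2}(1+\alpha)}$ \emph{pinned to equal $\bar u$ at both $s_1$ and $s_2$}. The upper bound $\bar u\le V$ for $r>s_2$ is then obtained by contradiction: if it failed at some $s_3>s_2$, the comparison principle (used in its legitimate direction, solution below supersolution, on the bounded annulus $\{s_1\le|x|\le s_3\}$) together with the strong maximum principle would force $V<\bar u$ at $s_2$, contradicting the pinning. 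This indirect, two-point argument is exactly the ingredient your proposal is missing; the uniform bound on $\{1\le|x|\le R\}$ (which you attribute to \eqref{Eq:20VI20-H2} alone, but which in fact also needs the closeness to $U$ and the monotonicity of $\bar u$) and the final application of Lemma \ref{Thm:20VI20-BetterD=>FX} then proceed as you describe.
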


\begin{proof} For $r = |x| \geq 1$, let $U(r) = \varkappa (1 + r^2)^{-\frac{n-2}{2}}$ and $\bar u(r) = \inf_{\partial B_r} u$. Clearly $\bar u$ is continuous. Note also that if we define, for $O \in O(n)$, the set of all $n\times n$ orthogonal matrices, $u_O(x) = u(Ox)$, then $\lambda(A^{u_O})  \in \bar\Gamma_{n/2}$ in $\{|x| > 1\}$ in the viscosity sense and $\bar u = \inf \{u_O: O \in O(n)\}$. Hence $\lambda(A^{\bar u}) \in \bar\Gamma_{n/2}$ in $\{|x| > 1\}$ in the viscosity sense. By \cite[Lemma 2.7]{LiNgBocher}, $\bar u \in C^{0,1}_{loc}(\{|x| \geq 1\})$ and we have for $c > d > 1$,
\begin{equation}
0 \leq \ln \frac{\bar u(d)}{\bar u(c)} \leq (n-2) \ln \frac{c}{d}.
	\label{Eq:04III20-M1}
\end{equation}

Fix some $R' < s_1 < s_2 < R$. Note that
\[
\frac{U(s_1)}{U(s_2)} = \Big(\frac{1 + s_2^2}{1 + s_1^2}\Big)^{\frac{n-2}{2}} > \Big(\frac{s_2}{s_1}\Big)^{\frac{n-2}{2}}.
\]
Let
\[
\beta = \frac{1}{2}\Big\{\frac{2}{n-2}\Big(\ln \frac{s_2}{s_1}\Big)^{-1} \ln \frac{U(s_1)}{U(s_2)} - 1\Big\} > 0.
\]
Then, there exists small $\delta > 0$ such that when $\|u - U\|_{L^\infty(\{R' \leq |x| \leq R\})} < \delta$, we have
\[
\alpha := \frac{2}{n-2}\Big(\ln \frac{s_2}{s_1}\Big)^{-1} \ln \frac{\bar u (s_1)}{\bar u (s_2)} - 1 > \beta > 0.
\]
Note that by \eqref{Eq:04III20-M1}, $\alpha \leq 1$. Fix such $\delta$ from now on.

Using $\|u - U\|_{L^\infty(\{R' \leq |x| \leq R\})} < \delta$, \eqref{Eq:04III20-M1} and  \eqref{Eq:20VI20-H2} , we have that
\begin{equation}
u \leq C \bar u \leq C \text{ in } \{1 \leq |x| \leq R\}.
	\label{Eq:05III20-X2}
\end{equation}

Now consider the function
\[
V(r) = \bar u(s_1) \Big(\frac{r}{s_1}\Big)^{-\frac{n-2}{2}(1 + \alpha)}
\]
which satisfies $\lambda(A^V) \in \partial\Gamma_{n/2}$ on $\RR^n \setminus \{0\}$ (see \cite[Theorem 1.6]{LiNgBocher}), $V(s_1) = \bar u(s_1)$ and $V(s_2) = \bar u(s_2)$.

We claim that $V(r) \geq \bar u(r)$ for all $r > s_2$. Indeed, suppose by contradiction this did not hold. Then we would have for some $s_3 > s_2$ that $V(s_3) < \bar u(s_3)$. Applying the comparison principle (see \cite[Lemma 2.8]{LiNgBocher} and \cite[Theorem 1.7 (b)]{LiNgWang18-CVPDE}) to $V$ and $\bar u$ in the annulus $\{s_1 \leq |x| \leq s_3\}$, we would arrive at $V < \bar u$ in $\{s_1 < |x| < s_3\}$ which is impossible as $V(s_2) = \bar u(s_2)$. The claim is proved.

In view of  \eqref{Eq:20VI20-H2}  and \eqref{Eq:05III20-X2}, it follows that
\[
u(x) \leq C|x|^{-\frac{n-2}{2}(1 + \alpha)} \text{ in } \{|x| \geq 1\}.
\]
The conclusion follows from the local boundedness estimate Lemma \ref{Thm:20VI20-BetterD=>FX} applied to the Kelvin transformation $\hat u(x) = \frac{1}{|x|^{n-2}}u(\frac{x}{|x|^2})$ of $u$.
\end{proof}

\subsection{Proof of the optimal decay estimate when $k > n/2$}\label{SSec:L22k>n/2}

When $k > n/2$, our proof of Lemma \ref{Lem:20VI20-KeyNew} is simpler, and uses the H\"older continuity for functions satisfying $\lambda(A^u) \in \bar \Gamma_k$. We also note that there are other methods to prove Lemma \ref{Lem:20VI20-KeyNew} in this case; see Appendix \ref{App:Misc} for one such argument.

\begin{proof}[Proof of Lemma \ref{Lem:20VI20-KeyNew} when $k > n/2$] 
By \eqref{as} and the Harnack estimate (cf. \eqref{Eq:20VI20-H2}), it suffices to show, for some $C > 0$ independent of $i$, that
\begin{equation}
\min_{\partial B_r} u_{i}\leq Cu_{i}^{-1}(0) r^{2-n} ~~\forall~r\geq R_i/\lambda_i,
	\label{Eq:23VII20-X1}
\end{equation}
where $\lambda_i = u_i(0)^{\frac{2}{n-2}}$ and $R_i$ is as in \eqref{as}.

For any $r \geq R_i/\lambda_i$, define $\bar u_{i}(\xi):=r^{\frac{n-2}{2}}u_{i}(2r\xi )$, $\forall \xi\in B_{1}$, and fix some $\xi_i$ such that $|\xi_i| = \frac{1}{2\lambda_i r}$. Then $\lambda(A^{\bar u_{i}})\in\Gamma_{k}$ in $B_{1}$ and
\begin{equation}
|\xi_i|^{\frac{n-2}{2}} \bar u_i(0) = 2^{-\frac{n-2}{2}}.
	\label{Eq:23VII20-M0}
\end{equation}
Moreover, in view of \eqref{as}, we have $|\xi_i|^{\frac{n-2}{2}}\bar u_i(\xi_i) = 2^{-\frac{n-2}{2}} \tilde u_i( \frac{\xi_i} {|\xi_i|}) = 2^{-(n-2)} + o(1)$ and so, for some $i_0$ independent of $r$,
\begin{equation}
|\xi_i|^{\frac{n-2}{2}}\bar u_i(\xi_i)  \leq 2^{-\frac{3(n-2)}{4}} \text{ for all } i \geq i_0.
	\label{Eq:23VII20-M1}
\end{equation}
As $|\xi_i| \leq \frac{1}{2R_i}  \leq \frac{1}{2}$ for large $i$, it thus follows, in view of \cite[Theorem 1.4, equation (1.14)]{LiNgBocher}, that the function $w_i = \bar u_i^{\frac{n-2k}{(n-2)k}}$ satisfies
\[
\sup_{\partial B_{1/2}} w_i 
	\geq \frac{|w_i(0) - w_i(\xi_i)|}{C|\xi_i|^{\frac{2k-n}{k}}} 
	\stackrel{\eqref{Eq:23VII20-M0},\eqref{Eq:23VII20-M1}}{\geq} \frac{1}{C |\xi_i|^{\frac{2k-n}{2k}}}
	\stackrel{\eqref{Eq:23VII20-M0}}{\geq} \frac{1}{C w_i(0)} \text{ for large } i,
\]
where the constant $C$ depends only on $n$ and $k$. Returning to $u_i$ we obtain \eqref{Eq:23VII20-X1}.
 \end{proof}

\section{Proof of Lemma \ref{Thm:20VI20-BetterD=>FX}}\label{Sec:LBdedness}

\begin{proof} For $R > 1$, let $\tilde u = \tilde u_R(x) = c_n \frac{R^{n-2}}{|x|^{n-2}} u\big(\frac{R^2x}{|x|^2}\big)$ for $|x| \geq 1$ where $c_n$ is a normalization constant depending only on $n$, suitable chosen so that $\tilde u$ satisfies 
\begin{align}
&\lambda(A^{\tilde u}) \notin \bar\Gamma_{n/2} \text{ or } \sigma_{n/2}(\lambda(A^{\tilde u})) \leq 2^{-n/2} \Big(\begin{array}{c}n\\n/2\end{array}\Big) 
 \text{ in } \RR^n \setminus \bar B_1,
	\label{Eq:04III20-A1}\\
& \limsup_{|x|\rightarrow \infty} {\tilde u}(x) |x|^{n-2} < \infty,
	\label{Eq:04III20-A2}\\
& \tilde u(x) \leq C_0 R^{-\frac{(n-2)\alpha}{2}} |x|^{-\frac{n-2}{2}(1 + \alpha)}.
	\label{Eq:04III20-A3td}
\end{align}
Here \eqref{Eq:04III20-A1} is understood in the viscosity sense.

By \cite[Theorem 1, Case I.1 and I.3]{ChangHanYang05-JDE}, for $a \geq 0$, there exists a unique solution to the problem
\begin{align*}
\sigma_{n/2}(\lambda(A^{V_{a}})) 
	&= 2^{-n/2} \Big(\begin{array}{c}n\\n/2\end{array}\Big),\qquad \lambda(A^{V_{a}}) \in \Gamma_{n/2} \text{ in } \RR^n \setminus \{0\},\\
V_{a}(1)
	&= e^{-\frac{n-2}{2}a}
		 \text{ and } \frac{d}{dr}\Big|_{r=1} \ln\Big(r^{\frac{n-2}{2}}V_{a}\Big) = 0.
\end{align*}
Furthermore, with $h(a) = 1 - e^{-na} \in [0,1)$ and $\gamma(a) = \frac{n-2}{2}\big[1 + \big(1 - h(a)^{2/n}\big)^{1/2}\big] \in (\frac{n-2}{2}, n-2]$, we have
\begin{equation}
0 < \lim_{r \rightarrow \infty} V_{a} r^{\gamma(a)}  < \infty.
	\label{Eq:05III19-T1}
\end{equation}
Moreover, the limit is uniform for $a$ belonging to a compact interval. Note that $\gamma'(a) < 0$ is monotone decreasing and $\gamma(0) = n - 2$ and $\gamma(\infty) = \frac{n-2}{2}$.

\medskip
\noindent{\it Step 1:} We claim that there exist large $a_0 > 0$ and large $R > 0$ such that 
\begin{equation}
\tilde u < V_{a_0}(x) \text{ for }\{|x| \geq 1\}.
	\label{Eq:05III19-T2}
\end{equation}
To see this, first select $a_0$ sufficiently large so that
\[
\gamma(a_0) < \frac{n-2}{2}(1 + \alpha).
\]
By \eqref{Eq:05III19-T1}, there exists $R' \geq 1$ such that $V_{a_0}(x) > |x|^{-\frac{n-2}{2}(1 + \alpha)}$ for $|x| > R'$. Clearly, we can choose large $R > 1$ such that $C_0 R^{-\frac{n-2}{2}\alpha} < 1$ and $V_{a_0}(x) > C_0 R^{-\frac{n-2}{2}\alpha} |x|^{-\frac{n-2}{2}(1 + \alpha)}$ in $\{1 \leq |x| \leq R'\}$. \eqref{Eq:05III19-T2} then follows from \eqref{Eq:04III20-A3td} for these choices of $a_0$ and $R$.

\medskip
\noindent{\it Step 2:} To conclude, it suffices to show that $\tilde u \leq V_{0}$ in $\{|x| \geq 1\}$ provided $\tilde u \leq V_{a_0}$ in $\{|x| \geq 1\}$ (which holds in view of \eqref{Eq:04III20-A3td}).

Let
\[
T = \Big\{t \in [0,1]: \tilde u \leq V_{ta_0} \text{ in } \{|x| \geq 1\}\Big\} \text{ and } t_0 = \inf T.
\]
Note that $1 \in T$ due to \eqref{Eq:05III19-T2}, and so $T$ is non-empty and $t_0$ is well-defined.

We need to show that $t_0 = 0$. Suppose by contradiction that $t_0 > 0$. Note that, by \eqref{Eq:04III20-A2} and \eqref{Eq:05III19-T1},
\[
\limsup_{|x|\rightarrow \infty} \tilde u(x) |x|^{n-2} < \infty = \lim_{|x|\rightarrow \infty} V_{t_0a_0}(x) |x|^{n-2},
\]
and, by \eqref{Eq:05III19-T2},
\[
\tilde u\big|_{|x|=1}  < e^{-\frac{n-2}{2} a_0} \leq e^{-\frac{n-2}{2} t_0 a_0} = V_{t_0 a_0}(1).
\]
Thus, by the minimality of $t_0$, there exists $x_0$ with $|x_0| > 1$ such that $\tilde u (x_0) = V_{t_0a_0}(x_0)$. As $\tilde u \leq V_{t_0a_0}$, this gives a contradiction to the strong comparison principle.
We conclude that $t_0 = 0$ and so $\tilde u \leq V_{0}$ in $\{|x| > 1\}$, as desired.
\end{proof}

\section{Integral estimates and proof of Theorem \ref{Thm:20VI20-deltaEnergy}}\label{Sec:epsEnergy}

In this section, we give the proof of Theorem \ref{Thm:20VI20-deltaEnergy}. It is more convenient to use $\psi = -\frac{2}{n-2}\ln u$ so that $A^u = e^{2\psi}F[\psi]$ where 
\[
F[\psi] = \nabla^2 \psi + \nabla \psi \otimes \nabla\psi - \frac{1}{2} |\nabla \psi|^2\,I.
\]
In this section, we use $\nabla$ for the differentiation with respect to the Euclidean metric $\ringg$ on $\RR^n$. The differential relation \eqref{Eq:20VI20-X1} then becomes
\[
\sigma_k(\lambda(F[\psi])) \leq e^{-2k\psi}, \quad \lambda(F[\psi]) \in \bar\Gamma_k.
\]
Theorem \ref{Thm:20VI20-deltaEnergy} is then obtained from a priori $L^\infty$ estimate for the differential inequality
\begin{equation}
\sigma_k(\lambda(F[\psi])) \leq f \text{ and } \quad F[\psi] \in \bar\Gamma_k \text{ in } B_2
	\label{Eq:11V20-E1}
\end{equation}
where $f$ is a given function.

We prove:

\begin{proposition}\label{Prop:25V20-P1}
Let $1 \leq k \leq n/2$. Suppose $\psi \in C^2(B_2)$ satisfies \eqref{Eq:11V20-E1} for some $f \in L^p(B_2)$ with $p > \frac{n}{2k}$. For every given $q_1 >  0$ there exists $C > 0$ depending only on $n$, $p$, $q_1$ and an upper bound for $\|f\|_{L^p(B_2)}$ such that
\[
\|e^{-\psi}\|_{L^\infty(B_{1/2})} \leq C\,\|e^{-\psi}\|_{L^{q_1}(B_1)}.
\]

\end{proposition}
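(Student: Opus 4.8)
The plan is to run a Moser-type iteration adapted to the fully nonlinear inequality \eqref{Eq:11V20-E1}. The key algebraic input is that for $\lambda \in \bar\Gamma_k$ one has the Maclaurin-type inequality $\sigma_1(\lambda) \geq c_{n,k}\,\sigma_k(\lambda)^{1/k} \geq 0$, so $\Delta\psi \geq -|\nabla\psi|^2 + \tfrac{n}{2}|\nabla\psi|^2 - (\text{tr } F[\psi])$… more precisely, from $\sigma_1(\lambda(F[\psi])) = \Delta\psi + (1 - \tfrac n2)|\nabla\psi|^2 \geq 0$ and $\sigma_1(\lambda(F[\psi])) \geq c_{n,k} f^{1/k}$ when $f \geq 0$, we get a differential inequality for $\psi$ of the form $\Delta\psi + (\tfrac n2 - 1)|\nabla\psi|^2 \geq c_{n,k} f^{1/k} \geq 0$. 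Writing $w = e^{-\psi}$ (so $u = w^{2/(n-2)}$ up to the exponent bookkeeping), this transforms into a linear-looking differential inequality $\Delta w \leq (\tfrac n2 - 2)\,w^{-1}|\nabla w|^2 + \ldots$; the cleanest route, which I expect the authors take, is instead to test the inequality $\Delta\psi + (\tfrac n2 - 1)|\nabla\psi|^2 \geq 0$ (or a sharper one retaining a positive multiple of $f^{1/k}$) against $\eta^2 e^{-q\psi}$ for cutoff $\eta$ and exponent $q > 0$, integrate by parts, and absorb the $|\nabla\psi|^2$ term using the favorable sign of the coefficient $(\tfrac n2 - 1) \geq 0$ together with the fact that the "bad" first-order term has a sign that cooperates for $n \geq 3$. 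This produces the Caccioppoli estimate
\[
\int \eta^2 |\nabla (e^{-q\psi/2})|^2 \leq C(q)\int |\nabla\eta|^2 e^{-q\psi} + C(q)\int \eta^2 f^{?}\,e^{-q\psi}.
\]

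Next I would feed this into the Sobolev inequality to upgrade the $L^q$ norm of $e^{-\psi}$ on a slightly smaller ball to an $L^{q\chi}$ norm, $\chi = \tfrac{n}{n-2}$, paying the price of the $\|f\|_{L^p}$ term via Hölder: the hypothesis $p > \tfrac{n}{2k}$ is exactly what makes $f^{1/k} \in L^{kp}$ with $kp > \tfrac n2$, the subcritical threshold needed for the iteration to close (this is the analogue of the classical condition $f \in L^{p}$, $p > n/2$, for $-\Delta u = f u$). Iterating over a sequence of radii $r_j \downarrow 1/2$ and exponents $q_j = q_1 \chi^j$, with the usual geometric control on the constants $C(q_j)$, yields
\[
\|e^{-\psi}\|_{L^\infty(B_{1/2})} \leq C\,\|e^{-\psi}\|_{L^{q_1}(B_1)}
\]
with $C$ depending only on $n$, $p$, $q_1$ and an upper bound for $\|f\|_{L^p(B_2)}$. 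One should be slightly careful that the starting exponent $q_1$ may be small; but since $e^{-\psi} > 0$ and local, one can always first raise a small $L^{q_1}$ norm to an $L^{q_0}$ norm for a convenient $q_0$ by a single Caccioppoli+Sobolev step (reverse Hölder does not directly apply, but the iteration works downward-then-upward, or one simply notes the inequality is needed only for $q_1$ fixed and tracks the dependence).

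The main obstacle, I expect, is the derivation of the correct differential inequality for $\psi$ retaining enough of the right-hand side $f$ and with the first-order term of the correct sign — in particular making sure the term $-\tfrac{n-2}{2}|\nabla\psi|^2$ (or whatever sign it carries after the substitution $w = e^{-\psi}$) can be absorbed rather than fought. In the borderline case $k = n/2$ the coefficient $(\tfrac n2 - 1) = k - 1$ is still nonnegative, so no sign catastrophe occurs, but the subcriticality margin $p > \tfrac{n}{2k} = 1$ is thin and one must be careful that the Hölder exponents in each iteration step are admissible and that the accumulated constant stays finite; this is a standard but delicate bookkeeping point. A secondary technical point is that $\psi \in C^2$ only, so all integrations by parts are legitimate classically and no viscosity machinery is needed here — the viscosity-solution subtleties enter elsewhere in the paper, not in this proposition.
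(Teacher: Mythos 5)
There is a genuine gap, and it sits exactly where you flag the "main obstacle": the reduction to a scalar (trace) inequality destroys the structure the proof needs. From \eqref{Eq:11V20-E1} you only know $\sigma_k(\lambda(F[\psi]))\le f$ and $\lambda(F[\psi])\in\bar\Gamma_k$; the Newton--Maclaurin inequality then gives $\sigma_1(\lambda(F[\psi]))\ge c_{n,k}\,\sigma_k(\lambda(F[\psi]))^{1/k}\ge 0$, \emph{not} $\sigma_1\ge c_{n,k}f^{1/k}$ (the upper bound $\sigma_k\le f$ passes to the wrong side), and there is also no upper bound on $\sigma_1$ in terms of $f$, since $\lambda\in\bar\Gamma_k$ with $\sigma_k$ small is compatible with $\sigma_1$ arbitrarily large. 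So after your reduction the datum $f$ disappears entirely, and all that remains is that $u=e^{-\frac{n-2}{2}\psi}$ is superharmonic. That information cannot yield the conclusion: for the standard bubble $u_\varepsilon(x)=\bigl(\varepsilon/(\varepsilon^2+|x|^2)\bigr)^{\frac{n-2}{2}}$ one has $e^{-\psi}=\varepsilon/(\varepsilon^2+|x|^2)$, so $\sup_{B_{1/2}}e^{-\psi}=\varepsilon^{-1}\to\infty$ while $\|e^{-\psi}\|_{L^{q_1}(B_1)}\to 0$ for any fixed $q_1\in(0,n)$, and $u_\varepsilon$ is superharmonic with $\lambda(F[\psi])\in\Gamma_k$. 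The only reason this does not contradict the Proposition is that here $f=e^{-2k\psi}\sigma_k(\lambda(A^{u_\varepsilon}))\sim\bigl(\varepsilon/(\varepsilon^2+|x|^2)\bigr)^{2k}$ blows up in $L^p$ precisely when $p>\frac{n}{2k}$; a scheme that cannot see $f$ would therefore "prove" a false statement, so it cannot be repaired at the level of $\sigma_1$ and the Laplacian.

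The paper's proof keeps the full matrix structure: using the divergence identities for the Newton tensors of $F[\psi]$ (Corollary \ref{Cor:23IV20-C2}), for $q>\theta+\frac{k(n-2k)}{k+1}$ one gets a divergence inequality in which $\sigma_k(F[\psi])\le f$ enters with the good sign and all lower-order terms $|\nabla\psi|^{2j+2}\sigma_{k-1-j}(F[\psi])$ are nonnegative because $\lambda\in\bar\Gamma_k$; this yields the Caccioppoli inequality of Proposition \ref{Prop:11V20-Cacci1}, which controls $\int e^{-q\psi}|\nabla\psi|^{2k}$ (the $2k$-th power, not the square). One then applies the $W^{1,2k}$ Sobolev inequality to $e^{-q\psi/(2k)}$, with gain $\chi=\frac{n}{n-2k}$ (any $\chi>\frac{p}{p-1}$ when $k=n/2$), and the admissibility condition $\chi>\frac{p}{p-1}$ is exactly $p>\frac{n}{2k}$. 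Your proposed iteration, based on $|\nabla\psi|^2$ and the $H^1$ Sobolev exponent $\chi=\frac{n}{n-2}$ with H\"older absorption of $f$, would instead require $p>\frac{n}{2}$, strictly stronger than the hypothesis whenever $k\ge 2$ — another sign that the $\sigma_1$ reduction loses exactly the factor $k$ that the theorem is about. (Your final remark is correct, though: since $\psi\in C^2$, all the integrations by parts are classical and no viscosity machinery is needed in this proposition.)
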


This result will be obtained from some Cacciopoli-type inequalities (see Propositions \ref{Prop:11V20-Cacci1}).  As in \cite{Han04, Gonzalez06-RemSing}, the proof of these results uses the Moser iteration technique.

Let us assume these results for now and give the 

\begin{proof}[Proof of Theorem \ref{Thm:20VI20-deltaEnergy}] Let $T := \sup_{\Omega} \textrm{dist}(x,\partial\Omega)^{\frac{n-2}{2}} u(x)$. It suffices to show that if $T > 4^{\frac{n-2}{2}}$, then $\int_{B_2} u^{\frac{2n}{n-2}}\,dx > \delta$ for some positive constant $\delta = \delta_n$.

Take $x_0 \in \Omega$ such that $\textrm{dist}(x,\partial\Omega)^{\frac{n-2}{2}} u(x_0)  = T$. Let $r = \frac{1}{2}\textrm{dist}(x,\partial\Omega), \lambda = u(x_0)^{\frac{2}{n-2}}$ and define
\[
\tilde u(z) = \frac{1}{u(x_0)} u(x_0 + \lambda^{-1} z) \text{ for } |z| < \lambda r.
\]
Since $T > 4^{\frac{n-2}{2}}$, we have $\lambda r > 2$. The function $\tilde u$ thus satisfies
\begin{align*}
&\sigma_k(\lambda(A^{\tilde u})) \leq 1, \quad \lambda(A^{\tilde u}) \in \Gamma_k, \quad \tilde u > 0 \text{ in } B_{2},\\
&\tilde u(0) = 1, \quad \tilde u \leq 2^{\frac{n-2}{2}} \text{ in } B_{2}.
\end{align*}

Let $\psi = -\frac{2}{n-2}\ln \tilde u$ and $f =  e^{-2k\psi}$. Then,
\begin{align*}
\sigma_k(\lambda(F[\psi]) )
	&\leq f \quad\text{ and } \quad \lambda(F[\psi]) \in \Gamma_k \text{ in } B_{2}
	,\\
\|e^{-\psi}\|_{L^n(B_1)} 
	&= \|\tilde u\|_{L^{\frac{2n}{n-2}}(B_1)}^{\frac{2}{n-2}} \leq \|u\|_{L^{\frac{2n}{n-2}}(B_2)}^{\frac{2}{n-2}} 
	,\\
\|e^{-\psi}\|_{L^\infty(B_{1/2})}
	&\geq e^{-\psi(0)} = 1.
\end{align*}
As $\|f\|_{L^\infty(B_2)} \leq C(n,k)$, we obtain from Proposition \ref{Prop:25V20-P1} that
\[
\|u\|_{L^{\frac{2n}{n-2}}(B_2)}^{\frac{2}{n-2}} \geq \|e^{-\psi}\|_{L^n(B_1)}  \geq \frac{1}{C} \|e^{-\psi}\|_{L^\infty(B_{1/2})} \geq \frac{1}{C}.
\]
The conclusion readily follows by taking $\delta < \frac{1}{C^n}$.
\end{proof}

The rest of the section is organized as follows. We start with recalling and deriving some divergence identities for the $\sigma_k$-Yamabe equation in Subsection \ref{SSec:DivId}. We then uses the identities to proved Cacciopoli-type inequalities in Subsection \ref{SSec:CaccIneql}. These will then used to prove Proposition \ref{Prop:25V20-P1}  in Subsection \ref{SSec:ProofMoser}.

\subsection{Divergence identities}\label{SSec:DivId}

Let us start some divergence identities which will be used later on. For $0 \leq \ell \leq n$, let $\NewtonT{\ell}[\psi]$ be the $\ell$-th Newton tensors of $F[\psi]$. These are defined inductively by
\begin{equation}
\NewtonT{\ell+1}[\psi] = -  \NewtonT{\ell}[\psi] F[\psi] + \sigma_{\ell+1}(F[\psi]) I, \qquad \NewtonT{0}[\psi] = I.
	\label{Eq:22IV20-NTDef}
\end{equation}
The Newton tensors satisfy the following divergence identity.
\begin{lemma}
Suppose that $0 \leq \ell \leq n$ and $\psi \in C^2(B_1)$. Then
\begin{equation} 
\nabla_a \NewtonT{\ell}[\psi]^a{}_b
	= (n - 2\ell)\NewtonT{\ell}[\psi]^a{}_b \nabla_a \psi
		- (n-\ell)\sigma_\ell(F[\psi]) \nabla_b \psi \text{ in } B_1.
		\label{Eq:22IV20-E1}
\end{equation}
\end{lemma}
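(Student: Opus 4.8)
The plan is to prove \eqref{Eq:22IV20-E1} by induction on $\ell$ using the recursion \eqref{Eq:22IV20-NTDef}, after first isolating the only place where the particular structure of $F[\psi]$ enters, namely the Codazzi defect of $W:=F[\psi]$. Working on flat $\RR^n$, third derivatives of $\psi$ commute and there is no curvature term; differentiating $W_{bc}=\nabla_b\nabla_c\psi+\nabla_b\psi\,\nabla_c\psi-\tfrac12|\nabla\psi|^2\ringg_{bc}$, cancelling the $\nabla_a\nabla_b\nabla_c\psi$ contribution, and re-expressing the leftover Hessian terms through $W$ itself, one gets
\[
\nabla_a W_{bc}-\nabla_b W_{ac}=\nabla_b\psi\,W_{ac}-\nabla_a\psi\,W_{bc}+(W\nabla\psi)_b\,\ringg_{ac}-(W\nabla\psi)_a\,\ringg_{bc},
\]
where $(W\nabla\psi)_a:=W_{ad}\nabla_d\psi$. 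This identity is the entire geometric input; everything after it is formal algebra with Newton tensors.

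For the induction, the base case $\ell=0$ is immediate: $\NewtonT{0}[\psi]=I$ and $\sigma_0=1$, so both sides of \eqref{Eq:22IV20-E1} vanish. Assume \eqref{Eq:22IV20-E1} for $\ell$. Writing \eqref{Eq:22IV20-NTDef} as $\NewtonT{\ell+1}[\psi]=-\NewtonT{\ell}[\psi]\,W+\sigma_{\ell+1}(F[\psi])\,I$ and taking the divergence,
\[
\nabla_a\NewtonT{\ell+1}[\psi]^a{}_b=-\big(\nabla_a\NewtonT{\ell}[\psi]^{ac}\big)W_{cb}-\NewtonT{\ell}[\psi]^{ac}\nabla_a W_{cb}+\nabla_b\sigma_{\ell+1}(F[\psi]).
\]
Into the first term I substitute the induction hypothesis; in the second and third I use $\nabla_b\sigma_{\ell+1}(F[\psi])=\NewtonT{\ell}[\psi]^{ac}\nabla_b W_{ca}$ (the standard relation $\partial\sigma_{\ell+1}/\partial W=\NewtonT{\ell}$) together with the Codazzi identity above to trade $\nabla_a W_{cb}$ for $\nabla_c W_{ab}$ up to zeroth-order terms. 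Then I collapse everything using $\NewtonT{\ell}[\psi]\,W=\sigma_{\ell+1}(F[\psi])\,I-\NewtonT{\ell+1}[\psi]$, $\tr(\NewtonT{\ell}[\psi])=(n-\ell)\sigma_\ell(F[\psi])$, and $\tr(\NewtonT{\ell}[\psi]\,W)=(\ell+1)\sigma_{\ell+1}(F[\psi])$, the last two obtained simply by tracing \eqref{Eq:22IV20-NTDef}. All the $(W\nabla\psi)_b$-terms cancel, and the coefficients in front of $\NewtonT{\ell+1}[\psi]^a{}_b\nabla_a\psi$ and of $\sigma_{\ell+1}(F[\psi])\nabla_b\psi$ reorganise to exactly $n-2(\ell+1)$ and $-(n-(\ell+1))$, which closes the induction.

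The one genuinely delicate point is the bookkeeping in the inductive step: one must consistently use that $\NewtonT{\ell}[\psi]$, being a polynomial in the symmetric $W$, is itself symmetric and commutes with $W$ — this is what lets one freely relabel and raise/lower the contracted index in $\nabla_a\NewtonT{\ell}[\psi]^{ac}$, $\NewtonT{\ell}[\psi]^{ac}W_{cb}$, $\NewtonT{\ell}[\psi]^{ac}(W\nabla\psi)_a$, etc. — and then track the numerical coefficients carefully enough to see them telescope from $(n-2\ell,\,n-\ell)$ to $(n-2\ell-2,\,n-\ell-1)$. (A remark worth recording, though not needed for the proof: the Codazzi defect above is precisely what identifies $F[\psi]$ with the flat-background form of the Schouten tensor of the conformally flat metric $e^{-2\psi}\ringg$, whose Newton tensors are all divergence-free with respect to that metric; \eqref{Eq:22IV20-E1} is the Euclidean coordinate expression of that fact, but the direct induction sidesteps both that machinery and any regularity concern for $e^{-2\psi}\ringg$.)
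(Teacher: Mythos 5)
Your proposal is correct and follows essentially the same route as the paper: the paper likewise takes the divergence of the recursion \eqref{Eq:22IV20-NTDef}, computes the Codazzi defect $\nabla_a F[\psi]^r{}_b-\nabla_b F[\psi]^r{}_a$ explicitly and rewrites it in terms of $F[\psi]$, and then closes a straightforward induction on $\ell$ using $\NewtonT{\ell}[\psi]F[\psi]=\sigma_{\ell+1}(F[\psi])I-\NewtonT{\ell+1}[\psi]$ and the standard trace identities. The only cosmetic difference is that you state the Codazzi identity as a separate preliminary step, whereas the paper folds it into the single displayed computation.
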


Note that if we write $\tilde g = e^{-2\psi} \ringg$ and let $\tilde A$ be the $(1,1)$ Schouten tensor of $\tilde g$ and $\NewtonT{\ell}(\tilde g)$ denote its Newton tensors, then \eqref{Eq:22IV20-E1} is equivalent to the following more familiar divergence identity (see \cite{Viac00-Duke}):
\[
\nabla_a^{\tilde g} \NewtonT{\ell}(\tilde g){}^a{}_b = 0.
\]
(See \cite{LiNgGreen} for the case the metrics involved are not locally conformally flat.)
For completeness let us re-derive it here.

\begin{proof} 
Using \eqref{Eq:22IV20-NTDef}, we compute
\begin{align*}
\nabla_a \NewtonT{\ell+1}[\psi]^a{}_b
	&= -  \nabla_a \NewtonT{\ell}[\psi]^a{}_r F[\psi]^r{}_b - \NewtonT{\ell}[\psi]^a{}_r \nabla_a  F[\psi]^r{}_b + \NewtonT{\ell}[\psi]^a{}_r \nabla_b F[\psi]^r{}_a \\
	&= -  \nabla_a \NewtonT{\ell}[\psi]^a{}_r F[\psi]^r{}_b - \NewtonT{\ell}[\psi]^a{}_r [ \nabla_a  F[\psi]^r{}_b - \nabla_b F[\psi]^r{}_a ]\\
	&= -  \nabla_a \NewtonT{\ell}[\psi]^a{}_r F[\psi]^r{}_b - \NewtonT{\ell}[\psi]^a{}_r \Big[ \nabla_a  \nabla^r \psi \nabla_b \psi -  \nabla_b \nabla^r \psi \nabla_a \psi\\
		&\qquad\qquad  - \nabla_a \nabla^t \psi \nabla_t\psi \delta^r{}_b + \nabla_b \nabla^t \psi \nabla_t \psi \delta^r{}_a \Big]
\end{align*}
\begin{align*}
	&= -  \nabla_a \NewtonT{\ell}[\psi]^a{}_r F[\psi]^r{}_b - \NewtonT{\ell}[\psi]^a{}_r \Big[ F[\psi]^r{}_a \nabla_b \psi -  F[\psi]^r{}_b \nabla_a \psi \\
		&\qquad\qquad - F[\psi]^t{}_a \nabla_t\psi \delta^r{}_b + F[\psi]^t{}_b \nabla_t \psi \delta^r{}_a \Big]\\
	&= -  \nabla_a \NewtonT{\ell}[\psi]^a{}_r F[\psi]^r{}_b 
		- (\ell-1)\sigma_{\ell+1}(F[\psi]) \nabla_b \psi \\
		&\qquad
		- 2\NewtonT{\ell+1}[\psi]^a{}_b \nabla_a \psi
		- (n-\ell)\sigma_\ell(F[\psi]) F[\psi]^t{}_b \nabla_t \psi .
\end{align*}
A straightforward induction on $\ell$ using the above recursive formula yields the desired conclusion.
\end{proof}

\begin{corollary}
Suppose $0 \leq \ell \leq n - 1$, $p \geq 0$, $\psi \in C^2(B_1)$. For $q \in \RR$, it holds that
\begin{align}
&\nabla_a(e^{-q\psi} |\nabla \psi|^p\NewtonT{\ell}[\psi]^a{}_b\nabla^b \psi )
\nonumber\\
	&\quad =  -p e^{-q\psi}  |\nabla \psi|^{p-2} \NewtonT{\ell+1}[\psi]^c{}_b \nabla^b \psi \nabla_c \psi + (p+\ell+1) e^{-q\psi}  \sigma_{\ell+1}(F[\psi]) |\nabla \psi|^p
	\nonumber\\
		&\qquad
		+   (n - 2\ell - q - \frac{p}{2} - 1) e^{-q\psi} |\nabla \psi|^p  \NewtonT{\ell}[\psi]^a{}_b \nabla_a \psi \nabla^b \psi \nonumber\\
		&\qquad 
		- \frac{n-\ell}{2} e^{-q\psi}  \sigma_\ell (F[\psi]) |\nabla\psi|^{p+2} \text{ in } B_1.
		\label{Eq:22IV20-E2} 
\end{align}
\end{corollary}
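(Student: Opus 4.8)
The plan is to derive \eqref{Eq:22IV20-E2} directly from \eqref{Eq:22IV20-E1} by the product rule, treating the left-hand side as a divergence of the vector field $X^a = e^{-q\psi}|\nabla\psi|^p \NewtonT{\ell}[\psi]^a{}_b \nabla^b\psi$. First I would expand
\[
\nabla_a X^a = e^{-q\psi}|\nabla\psi|^p\, \nabla_a\big(\NewtonT{\ell}[\psi]^a{}_b\nabla^b\psi\big)
+ e^{-q\psi}\,\NewtonT{\ell}[\psi]^a{}_b\nabla^b\psi\,\nabla_a\big(|\nabla\psi|^p\big)
- q\, e^{-q\psi}|\nabla\psi|^p\,\NewtonT{\ell}[\psi]^a{}_b\nabla^b\psi\,\nabla_a\psi,
\]
and then handle each of the three terms. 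The last term is already in the desired form: it contributes $-q\, e^{-q\psi}|\nabla\psi|^p \NewtonT{\ell}[\psi]^a{}_b\nabla_a\psi\nabla^b\psi$, which matches part of the coefficient $(n-2\ell-q-\tfrac p2-1)$ on line three of \eqref{Eq:22IV20-E2}.

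For the middle term I would use $\nabla_a(|\nabla\psi|^p) = p|\nabla\psi|^{p-2}\nabla^c\psi\,\nabla_a\nabla_c\psi$, and then rewrite the Hessian $\nabla_a\nabla_c\psi$ in terms of $F[\psi]$ via $F[\psi]_{ac} = \nabla_a\nabla_c\psi + \nabla_a\psi\nabla_c\psi - \tfrac12|\nabla\psi|^2 \ringg_{ac}$, i.e. $\nabla_a\nabla_c\psi = F[\psi]_{ac} - \nabla_a\psi\nabla_c\psi + \tfrac12|\nabla\psi|^2\ringg_{ac}$. Substituting gives three pieces: one producing $p\,e^{-q\psi}|\nabla\psi|^{p-2}\NewtonT{\ell}[\psi]^a{}_b\nabla^b\psi\, F[\psi]_{ac}\nabla^c\psi$, one producing $-p\,e^{-q\psi}|\nabla\psi|^p \NewtonT{\ell}[\psi]^a{}_b\nabla^b\psi\nabla_a\psi$ (feeding the $-\tfrac p2$ part of the line-three coefficient, with a factor to be tracked carefully — see below), and one producing $\tfrac p2\, e^{-q\psi}|\nabla\psi|^p\NewtonT{\ell}[\psi]^a{}_b\nabla^b\psi\nabla_a\psi$. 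For the first of these I would invoke the Newton tensor recursion \eqref{Eq:22IV20-NTDef}, $\NewtonT{\ell+1}[\psi]^c{}_b = -\NewtonT{\ell}[\psi]^c{}_r F[\psi]^r{}_b + \sigma_{\ell+1}(F[\psi])\delta^c{}_b$, so that $\NewtonT{\ell}[\psi]^a{}_b F[\psi]_{ac} = -\NewtonT{\ell+1}[\psi]_{bc} + \sigma_{\ell+1}(F[\psi])\,F[\psi]_{bc}$ — wait, rather $\NewtonT{\ell}[\psi]^a{}_r F[\psi]^r{}_b = \sigma_{\ell+1}(F[\psi])\delta^a{}_b - \NewtonT{\ell+1}[\psi]^a{}_b$; contracting with $\nabla^b\psi$ and $\nabla_c\psi$ appropriately yields the $-p\,e^{-q\psi}|\nabla\psi|^{p-2}\NewtonT{\ell+1}[\psi]^c{}_b\nabla^b\psi\nabla_c\psi$ term and an extra $p\,e^{-q\psi}\sigma_{\ell+1}(F[\psi])|\nabla\psi|^p$ term. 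This explains the appearance of $\NewtonT{\ell+1}$ and accounts for the $p$ inside the coefficient $(p+\ell+1)$.

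For the first term $e^{-q\psi}|\nabla\psi|^p\nabla_a(\NewtonT{\ell}[\psi]^a{}_b\nabla^b\psi)$ I would expand $\nabla_a(\NewtonT{\ell}[\psi]^a{}_b\nabla^b\psi) = (\nabla_a\NewtonT{\ell}[\psi]^a{}_b)\nabla^b\psi + \NewtonT{\ell}[\psi]^a{}_b\nabla_a\nabla^b\psi$, apply \eqref{Eq:22IV20-E1} to the divergence of the Newton tensor, and again rewrite the Hessian $\NewtonT{\ell}[\psi]^a{}_b\nabla_a\nabla^b\psi$ using $\nabla_a\nabla^b\psi = F[\psi]^b{}_a - \nabla_a\psi\nabla^b\psi + \tfrac12|\nabla\psi|^2\delta^b{}_a$ together with the trace identity $\NewtonT{\ell}[\psi]^a{}_b F[\psi]^b{}_a = (\ell+1)\sigma_{\ell+1}(F[\psi])$ and $\NewtonT{\ell}[\psi]^a{}_a = (n-\ell)\sigma_\ell(F[\psi])$. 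Collecting: the $(\ell+1)\sigma_{\ell+1}$ piece combines with the $p\sigma_{\ell+1}$ piece from the previous paragraph to give the coefficient $(p+\ell+1)$; the $\tfrac12|\nabla\psi|^2(n-\ell)\sigma_\ell$ piece combines with the $-(n-\ell)\sigma_\ell(F[\psi])|\nabla\psi|^2$ from \eqref{Eq:22IV20-E1} to give the net $-\tfrac{n-\ell}{2}e^{-q\psi}\sigma_\ell(F[\psi])|\nabla\psi|^{p+2}$; the $(n-2\ell)\NewtonT{\ell}[\psi]^a{}_b\nabla_a\psi\nabla^b\psi$ from \eqref{Eq:22IV20-E1}, the $-\NewtonT{\ell}[\psi]^a{}_b\nabla_a\psi\nabla^b\psi$ from the Hessian rewrite, the $-q$ term and the two fractional-$p$ terms assemble into the coefficient $(n-2\ell-q-\tfrac p2-1)$. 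The final step is simply to verify that all the scalar coefficients match exactly. The main obstacle — really the only one — is arithmetic bookkeeping: keeping the index contractions consistent (raising/lowering with $\ringg$, symmetry of $\NewtonT{\ell}[\psi]$ and $F[\psi]$) and making sure the several contributions to the $\NewtonT{\ell}[\psi]^a{}_b\nabla_a\psi\nabla^b\psi$ coefficient add up to precisely $n-2\ell-q-\tfrac p2-1$; there is no conceptual difficulty, so I would present it as a direct computation and double-check the coefficient count against the case $p=0$ (where \eqref{Eq:22IV20-E2} should reduce to a contraction of \eqref{Eq:22IV20-E1} with $e^{-q\psi}\nabla^b\psi$ plus the Hessian terms).
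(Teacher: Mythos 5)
Your proposal is correct and the coefficient bookkeeping indeed closes ($-p+\tfrac p2=-\tfrac p2$ for the $\NewtonT{\ell}[\psi]^a{}_b\nabla_a\psi\nabla^b\psi$ term, $-(n-\ell)+\tfrac{n-\ell}{2}=-\tfrac{n-\ell}{2}$ for the $\sigma_\ell$ term); this is essentially the paper's own argument, which runs the same computation with the same ingredients --- \eqref{Eq:22IV20-E1}, the recursion \eqref{Eq:22IV20-NTDef}, and the rewriting of the Hessian via $F[\psi]$ --- only organized in the reverse direction (starting from $(\ell+1)\sigma_{\ell+1}(F[\psi])|\nabla\psi|^p=|\nabla\psi|^p\NewtonT{\ell}[\psi]^a{}_bF[\psi]^b{}_a$ and solving for the divergence, after first reducing to $q=0$).
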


\begin{proof} It suffices to consider the case $q = 0$, since the general case follows from this case immediately. We compute
\begin{align*}
&(\ell+1)\sigma_{\ell+1}(F[\psi])  |\nabla\psi|^p
		= |\nabla \psi|^p\NewtonT{\ell}[\psi]^a{}_bF[\psi]^b{}_a  \\
	&\quad= |\nabla \psi|^p\NewtonT{\ell}[\psi]^a{}_b(\nabla^b\nabla_a \psi + \nabla^b \psi \nabla_a\psi - \frac{1}{2} |\nabla \psi|^2\,\delta^b{}_a)
		\\
	&\quad \stackrel{\eqref{Eq:22IV20-E1}}{=} \nabla_a(|\nabla \psi|^p\NewtonT{\ell}[\psi]^a{}_b\nabla^b \psi)  \\
		&\qquad
		- |\nabla \psi|^p   \Big[ (n - 2\ell)\NewtonT{\ell}[\psi]^a{}_b \nabla_a \psi
		- (n-\ell)\sigma_\ell(F[\psi]) \nabla_b \psi \Big] \nabla^b \psi
		\\
		&\qquad
		- p |\nabla \psi|^{p-2} \NewtonT{\ell}[\psi]^a{}_b  \Big[F[\psi]^c{}_a -  \nabla_a \psi \nabla^c \psi + \frac{1}{2}|\nabla\psi|^2 \delta^c{}_a \Big]\nabla^b \psi \nabla_c \psi
		\\
		&\qquad
		+  |\nabla \psi|^p\NewtonT{\ell}[\psi]^a{}_b \nabla_a \psi \nabla^b \psi - \frac{n-\ell}{2} \sigma_\ell (F[\psi]) |\nabla\psi|^{p+2}\\
	&\quad \stackrel{\eqref{Eq:22IV20-NTDef}}{=} \nabla_a(|\nabla \psi|^p\NewtonT{\ell}[\psi]^a{}_b\nabla^b \psi)  \\
		&\qquad
		-   (n - 2\ell - \frac{p}{2} - 1) |\nabla \psi|^p  \NewtonT{\ell}[\psi]^a{}_b \nabla_a \psi \nabla^b \psi  
		+ \frac{n-\ell}{2} \sigma_\ell (F[\psi]) |\nabla\psi|^{p+2}
		\\
		&\qquad
		+ p |\nabla \psi|^{p-2} \NewtonT{\ell+1}[\psi]^c{}_b \nabla^b \psi \nabla_c \psi - p \sigma_{\ell+1}(F[\psi]) |\nabla \psi|^p.
\end{align*}
Rearranging terms, we obtain identity \eqref{Eq:22IV20-E2} when $q = 0$.
\end{proof}

\begin{corollary}\label{Cor:23IV20-C1}
Suppose $\psi \in C^2(B_1)$. For $q \in \RR$ and $t,s > 0$, it holds that
\begin{align*}
&\nabla_a\Big\{e^{-q\psi}\Big[\sum_{j=0}^{k-1} \frac{t^{(j)}}{2^j s^{(j)} }  |\nabla \psi|^{2j}\NewtonT{k-1-j}[\psi]^a{}_b
	\Big]\nabla^b \psi \Big\}
	\nonumber\\
		&\quad =  k  e^{-q\psi}\sigma_k(F[\psi]) 
		\nonumber\\
		&\qquad
		+ e^{-q\psi} \sum_{j=0}^{k-1}  \frac{t^{(j)}}{2^j s^{(j)} } \Big(n - 2k + \frac{(j+1)(s-t)}{s + j} - q \Big)  
			|\nabla \psi|^{2j} \NewtonT{k-1-j}[\psi]^a{}_b \nabla_a \psi \nabla^b \psi
		\nonumber\\
		&\qquad 
		- e^{-q\psi} \sum_{j=0}^{k-1}  \frac{t^{(j)}}{2^{j+1} s^{(j+1)} } \Big((n-k+1)s - (k+1)t + (n-2k + s - t)j\Big)  \times
		\nonumber\\
			&\qquad\qquad \times
			|\nabla \psi|^{2j + 2} \sigma_{k-1-j}(F[\psi]) \text{ in } B_1.
\end{align*}
Here we have used the rising factorial notation $x^{(j)}$, i.e.
\[
x^{(0)} = 1 \text{ and } x^{(j)} = \prod_{i = 0}^{j-1} (x + i) \text{ for } j \geq 1.
\]
\end{corollary}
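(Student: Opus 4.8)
The plan is to derive the claimed identity in Corollary~\ref{Cor:23IV20-C1} by summing the family of divergence identities \eqref{Eq:22IV20-E2} from the previous corollary, with a judicious choice of the power $p$ at each level $\ell$, so that the last term produced at level $\ell$ (the $-\frac{n-\ell}{2}e^{-q\psi}\sigma_\ell(F[\psi])|\nabla\psi|^{p+2}$ term) is absorbed into the $\sigma_{\ell-1+1} = \sigma_\ell$ term appearing at level $\ell-1$. Concretely, I would set $\ell = k-1-j$ and $p = 2j$, and use the coefficient $\frac{t^{(j)}}{2^j s^{(j)}}$ as a telescoping weight $w_j$ to be determined; then $\nabla_a\big(e^{-q\psi}\sum_{j=0}^{k-1} w_j |\nabla\psi|^{2j}\NewtonT{k-1-j}[\psi]^a{}_b\nabla^b\psi\big)$ is a weighted sum of the right-hand sides of \eqref{Eq:22IV20-E2}.

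The bookkeeping goes as follows. In \eqref{Eq:22IV20-E2} with $(\ell,p) = (k-1-j, 2j)$, the first term on the right is $-2j\,e^{-q\psi}|\nabla\psi|^{2j-2}\NewtonT{k-j}[\psi]^c{}_b\nabla^b\psi\nabla_c\psi$; I would recognize $\NewtonT{k-j}[\psi] = \NewtonT{k-1-(j-1)}[\psi]$, so this term matches (up to the weight ratio) the third term $(n-2\ell-q-\frac p2-1)e^{-q\psi}|\nabla\psi|^p\NewtonT{\ell}[\psi]^a{}_b\nabla_a\psi\nabla^b\psi$ coming from the level $j-1$ summand. Requiring these to combine into a single clean coefficient at level $j$ gives a recursion $w_j/w_{j-1}$ in terms of $j$, and the rising-factorial ansatz $w_j = t^{(j)}/(2^j s^{(j)})$ is exactly the closed-form solution of that recursion; the quantity $n-2k+\frac{(j+1)(s-t)}{s+j}-q$ is what emerges after collecting the $\NewtonT{k-1-j}[\psi]^a{}_b\nabla_a\psi\nabla^b\psi$ coefficients. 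Similarly, the $\sigma$-terms: the second term $(p+\ell+1)e^{-q\psi}\sigma_{\ell+1}(F[\psi])|\nabla\psi|^p = (k+j)e^{-q\psi}\sigma_{k-j}(F[\psi])|\nabla\psi|^{2j}$ at level $j$ pairs with the fourth term $-\frac{n-\ell}{2}e^{-q\psi}\sigma_\ell(F[\psi])|\nabla\psi|^{p+2}$ at level $j-1$ (both carry $\sigma_{k-j}$ and $|\nabla\psi|^{2j}$, after reindexing), and collecting them yields the $|\nabla\psi|^{2j+2}\sigma_{k-1-j}(F[\psi])$ coefficient $-\frac{t^{(j)}}{2^{j+1}s^{(j+1)}}\big((n-k+1)s - (k+1)t + (n-2k+s-t)j\big)$. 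The only surviving top term (no cancellation partner) is the $\sigma_k$ contribution from $j=0$, namely $(p+\ell+1)\sigma_{\ell+1} = k\,\sigma_k(F[\psi])$ with weight $w_0 = 1$, giving the leading $k\,e^{-q\psi}\sigma_k(F[\psi])$.

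I would carry this out by induction on $k$, or equivalently by writing the left-hand divergence as $\sum_j w_j \cdot (\text{RHS of }\eqref{Eq:22IV20-E2}\text{ at level }j)$ and regrouping the four families of terms, checking that the boundary effects at $j=0$ and $j=k-1$ are consistent with the stated formula (at $j=k-1$ one uses $\sigma_{-1}:=0$ or rather that the $\NewtonT{-1}$ term does not appear, which is handled because the $|\nabla\psi|^{2j-2}\NewtonT{k-j}$ term at $j$ matches the index $k-1-j$ term at $j-1$ for $1\le j\le k-1$). The main obstacle, and the place requiring genuine care rather than routine algebra, is verifying that the rising-factorial weights simultaneously reconcile \emph{both} the $\NewtonT{}$-gradient terms and the $\sigma$-terms with a single choice of $w_j$ — i.e. that the two recursions these impose are compatible — and checking that the reduction identity $\NewtonT{\ell}[\psi]^c{}_b\nabla^b\psi\nabla_c\psi$ appearing with power $|\nabla\psi|^{p-2}$ really does line up index-for-index after the shift $\ell\mapsto\ell-1$, $p\mapsto p+2$. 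Once the coefficient algebra is organized this way, the identity follows by direct substitution and the termwise regrouping just described; I would present the computation as a single displayed chain of equalities with the reindexing made explicit at each step.
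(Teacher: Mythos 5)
Your proposal is correct and is essentially the paper's own proof: the paper likewise applies \eqref{Eq:22IV20-E2} at the levels $(\ell,p)=(k-1-j,2j)$, multiplies the $j$-th identity by the rising-factorial weight $\frac{t^{(j)}}{2^j s^{(j)}}$ (i.e. $1,\ \frac{t}{2s},\ \frac{t}{2s}\frac{t+1}{2(s+1)},\dots$), and adds, with the same pairing of the $\NewtonT{\ell+1}$-gradient and $\sigma_{\ell+1}$ terms at level $j$ against the level $j-1$ terms that you describe. The only cosmetic difference is that the weights need not be "determined" by a recursion — they are already fixed by the left-hand side of the stated identity — and at $j=k-1$ one just checks, as you indicate, that the split between the $\NewtonT{0}{}^a{}_b\nabla_a\psi\nabla^b\psi=|\nabla\psi|^2$ term and the $\sigma_0=1$ term matches the stated coefficients.
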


\begin{proof} Applying \eqref{Eq:22IV20-E2}, we have the following identities:
\begin{align*}
&\nabla_a(e^{-q\psi}\NewtonT{k-1}[\psi]^a{}_b\nabla^b \psi) 
\\
	&\quad=  k e^{-q\psi}\sigma_k(F[\psi]) \\
		&\qquad
		+   (n - 2k  + 1 - q)  e^{-q\psi} \NewtonT{k-1}[\psi]^a{}_b \nabla_a \psi \nabla^b \psi  
		- \frac{n-k+1}{2} e^{-q\psi}\sigma_{k-1} (F[\psi]) |\nabla\psi|^{2}
		,\\
&\nabla_a(e^{-q\psi}|\nabla \psi|^2\NewtonT{k-2}[\psi]^a{}_b\nabla^b \psi) 
\\
	&\quad =  -2 e^{-q\psi}\NewtonT{k-1}[\psi]^c{}_b \nabla^b \psi \nabla_c \psi 
		+ (k+1) e^{-q\psi}\sigma_{k-1}(F[\psi]) |\nabla \psi|^2\\
		&\qquad
		+   (n - 2k + 2 - q)e^{-q\psi} |\nabla \psi|^2  \NewtonT{k-2}[\psi]^a{}_b \nabla_a \psi \nabla^b \psi  
		- \frac{n-k+2}{2} e^{-q\psi}\sigma_{k-2} (F[\psi]) |\nabla\psi|^{4}
		,\\
&\nabla_a(e^{-q\psi}|\nabla \psi|^4\NewtonT{k-3}[\psi]^a{}_b\nabla^b \psi) 
\\
	&\quad=  -4 e^{-q\psi}|\nabla \psi|^{2} \NewtonT{k-2}[\psi]^c{}_b \nabla^b \psi \nabla_c \psi
		 + (k+2)e^{-q\psi} \sigma_{k-2}(F[\psi]) |\nabla \psi|^4\\
		&\qquad
		+   (n  -2k + 3 - q) e^{-q\psi}|\nabla \psi|^4  \NewtonT{k-3}[\psi]^a{}_b \nabla_a \psi \nabla^b \psi  
		- \frac{n-k+3}{2} e^{-q\psi}\sigma_{k-3} (F[\psi]) |\nabla\psi|^{6}
		,\\
	&\ldots
\end{align*}
Multiplying the first identity with $1$, the second identity by $\frac{t}{2s}$, the third identity by $\frac{t}{2s} \frac{t+1}{2(s+1)}$, and so on, and adding the resulting identities together, we obtain the conclusion.
\end{proof}

Choosing $t = n - k + 1$ and $s = k + 1 + \delta$ in Corollary \ref{Cor:23IV20-C1}, we obtain
\begin{corollary}\label{Cor:23IV20-C2}
Suppose $\psi \in C^2(B_1)$. For $q \in \RR$ and $\delta > - k - 1$, it holds that
\begin{align*}
&\nabla_a\Big\{e^{-q\psi}\Big[\sum_{j=0}^{k-1} \frac{(n - k + 1)^{(j)}}{2^j (k + 1 + \delta)^{(j)} }  |\nabla \psi|^{2j}\NewtonT{k-1-j}[\psi]^a{}_b
	\Big]\nabla^b \psi \Big\}
	\nonumber\\
		&\quad =  k  e^{-q\psi}\sigma_k(F[\psi]) 
		\nonumber\\
		&\qquad
		+ e^{-q\psi} \sum_{j=0}^{k-1}  \frac{(n-k+1)^{(j)}}{2^j (k+1+\delta)^{(j)} } \Big(\frac{k(n-2k) + \delta(n - 2k + 1 + j)}{k + 1 + \delta  + j} - q\Big) \times  \nonumber\\
			&\qquad\qquad \times
			|\nabla \psi|^{2j} \NewtonT{k-1-j}[\psi]^a{}_b \nabla_a \psi \nabla^b \psi
		\nonumber\\
		&\qquad 
		- \delta e^{-q\psi} \sum_{j=0}^{k-1}  \frac{(n-k+1)^{(j + 1)}}{2^{j+1} (k+1+\delta)^{(j+1)} }
			|\nabla \psi|^{2j + 2} \sigma_{k-1-j}(F[\psi]) \text{ in } B_1.
\end{align*}
\end{corollary}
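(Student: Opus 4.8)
The plan is simply to substitute $t = n - k + 1$ and $s = k + 1 + \delta$ into the identity of Corollary~\ref{Cor:23IV20-C1} and simplify. Both choices are admissible: $t = n - k + 1 > 0$, and $s = k + 1 + \delta > 0$ precisely because $\delta > -k-1$; in particular all the rising factorials $t^{(j)}$, $s^{(j)}$ and the denominators $s + j = k+1+\delta+j$ appearing for $0 \le j \le k-1$ are strictly positive, so the expression in Corollary~\ref{Cor:23IV20-C1} is well defined for these values. The outer divergence term and the leading term $k\,e^{-q\psi}\sigma_k(F[\psi])$ transcribe verbatim once we write $t^{(j)} = (n-k+1)^{(j)}$ and $s^{(j)} = (k+1+\delta)^{(j)}$, so the only work is in the two families of scalar coefficients.

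For the terms involving $|\nabla\psi|^{2j}\NewtonT{k-1-j}[\psi]^a{}_b\nabla_a\psi\,\nabla^b\psi$, I would record the two elementary identities $s - t = 2k + \delta - n$ and $s + j = k + 1 + \delta + j$, so that the scalar factor $n - 2k + \frac{(j+1)(s-t)}{s+j} - q$ becomes $n-2k + \frac{(j+1)(2k+\delta-n)}{k+1+\delta+j} - q$. Putting the first two summands over the common denominator $k+1+\delta+j$ and expanding, the numerator collapses:
\[
(n-2k)(k+1+\delta+j) + (j+1)(2k+\delta-n) = (n-2k)(k+\delta) + (j+1)\delta = k(n-2k) + \delta(n-2k+1+j),
\]
which is exactly the coefficient claimed.

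For the terms involving $|\nabla\psi|^{2j+2}\sigma_{k-1-j}(F[\psi])$, I would first use $s^{(j+1)} = s^{(j)}(s+j)$ to rewrite the prefactor $\frac{t^{(j)}}{2^{j+1}s^{(j+1)}}$ as $\frac{(n-k+1)^{(j)}}{2^{j+1}(k+1+\delta)^{(j+1)}}$, and then evaluate the bracket $(n-k+1)s - (k+1)t + (n-2k+s-t)j$. The key observation is the cancellation $n - 2k + s - t = n - 2k + (2k+\delta-n) = \delta$, so the bracket equals $(n-k+1)\big[(k+1+\delta)-(k+1)\big] + \delta j = \delta(n-k+1) + \delta j = \delta(n-k+1+j)$. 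Absorbing the factor $n-k+1+j$ into $(n-k+1)^{(j)}$ to produce $(n-k+1)^{(j+1)}$ then gives the stated coefficient $-\delta\,\frac{(n-k+1)^{(j+1)}}{2^{j+1}(k+1+\delta)^{(j+1)}}$. Collecting the three pieces yields Corollary~\ref{Cor:23IV20-C2}. There is no genuine analytic difficulty here; the only thing to watch is the rising-factorial bookkeeping, the mildly non-obvious point being the cancellation $n - 2k + s - t = \delta$ that lets $\delta$ be factored out of the last bracket.
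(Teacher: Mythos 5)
Your proposal is correct and is exactly the paper's route: Corollary \ref{Cor:23IV20-C2} is obtained by substituting $t = n-k+1$ and $s = k+1+\delta$ into Corollary \ref{Cor:23IV20-C1}, and your algebraic simplifications of the two coefficient families (including the key cancellations $s-t = 2k+\delta-n$ and $n-2k+s-t=\delta$, and the rising-factorial identity $x^{(j+1)}=x^{(j)}(x+j)$) check out. The paper leaves this verification implicit; you have simply written it out.
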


\subsection{Cacciopoli-type inequalities for $\sigma_k$ equations}
\label{SSec:CaccIneql}

In view of Corollary \ref{Cor:23IV20-C2}, for `large' $q$, we can recast \eqref{Eq:11V20-E1} in the form
\[
	\nabla_a \big(e^{-q\psi} X^a{}_b\nabla^b \psi \big) \leq k  e^{-q\psi}f  - \text{ (positive) good terms}
\]
where the matrix $X^a{}_b$ is non-negative definite and the good terms on the right hand side are are multiples of $e^{-q\psi} |\nabla \psi|^{2} \sigma_{k-1}(F[\psi])$, \ldots, $e^{-q\psi} |\nabla \psi|^{2k} \sigma_{0}(F[\psi])$. (See \eqref{Eq:23IV20-R1} below for the exact expression.) Exploiting this will lead us to the following Cacciopoli-type inequality.

\begin{proposition}[Cacciopoli-type inequality]\label{Prop:11V20-Cacci1}
Suppose $1 \leq k \leq n$, $\theta > 0$, $q > \theta + \frac{k(n-2k)}{k+1}$, $\psi \in C^2(B_1)$ and $F[\psi] \in \bar\Gamma_{k}$ in $B_1$. There exist $C = C(n,\theta) > 0$ and $\alpha = \alpha(n) > 0$ such that, for $0 < r < R < 1$,
\[
\int_{B_r} e^{-q\psi} |\nabla \psi|^{2k}\,dx
		\leq  C \int_{B_R}   e^{-q\psi}\sigma_k(F[\psi]) \,dx
		 + \frac{(q + C)^{\alpha} }{(R - r)^{2k}} \int_{B_R}  e^{-q\psi} \,dx.
\]
\end{proposition}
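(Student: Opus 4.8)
The starting point is Corollary \ref{Cor:23IV20-C2}. Fix $\delta = \theta > 0$ (so the hypothesis $q > \theta + \frac{k(n-2k)}{k+1}$ becomes relevant) and observe that for $0 \leq j \leq k-1$ the coefficient of $|\nabla \psi|^{2j}\NewtonT{k-1-j}[\psi]^a{}_b \nabla_a\psi\nabla^b\psi$ on the right-hand side, namely $\frac{(n-k+1)^{(j)}}{2^j(k+1+\delta)^{(j)}}\big(\frac{k(n-2k)+\delta(n-2k+1+j)}{k+1+\delta+j} - q\big)$, is strictly negative once $q$ exceeds $\theta + \frac{k(n-2k)}{k+1}$; indeed the maximum over $j$ of $\frac{k(n-2k)+\delta(n-2k+1+j)}{k+1+\delta+j}$ is an explicit bounded quantity (bounded by something like $\max(n-2k, \frac{k(n-2k)}{k+1}+\delta)$, which one checks by monotonicity in $j$). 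Since $F[\psi] \in \bar\Gamma_k$, the Newton tensors $\NewtonT{\ell}[\psi]$ for $0 \leq \ell \leq k-1$ are non-negative definite (a standard fact for the G\r{a}rding cone $\Gamma_k$), so each term $\NewtonT{k-1-j}[\psi]^a{}_b\nabla_a\psi\nabla^b\psi \geq 0$; likewise $\sigma_{k-1-j}(F[\psi]) \geq 0$. Hence, writing $X^a{}_b := \sum_{j=0}^{k-1}\frac{(n-k+1)^{(j)}}{2^j(k+1+\delta)^{(j)}}|\nabla\psi|^{2j}\NewtonT{k-1-j}[\psi]^a{}_b$ (a non-negative matrix), Corollary \ref{Cor:23IV20-C2} rearranges to
\[
\nabla_a\big(e^{-q\psi}X^a{}_b\nabla^b\psi\big) + c_0\,e^{-q\psi}\sum_{j=0}^{k-1}|\nabla\psi|^{2j}\NewtonT{k-1-j}[\psi]^a{}_b\nabla_a\psi\nabla^b\psi + \delta\,e^{-q\psi}\sum_{j}(\cdots)|\nabla\psi|^{2j+2}\sigma_{k-1-j}(F[\psi]) = k\,e^{-q\psi}\sigma_k(F[\psi]),
\]
with $c_0 \geq c(n,\theta)(q - \theta - \frac{k(n-2k)}{k+1}) > 0$ — the key sign information. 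Equivalently, all the ``good'' terms are controlled:
\[
c_0\,e^{-q\psi}\sum_{j=0}^{k-1}|\nabla\psi|^{2j}\NewtonT{k-1-j}[\psi]^a{}_b\nabla_a\psi\nabla^b\psi + \delta\,e^{-q\psi}\sum_{j}\gamma_j|\nabla\psi|^{2j+2}\sigma_{k-1-j}(F[\psi]) \leq k\,e^{-q\psi}\sigma_k(F[\psi]) - \nabla_a\big(e^{-q\psi}X^a{}_b\nabla^b\psi\big),
\]
where $\gamma_j = \frac{(n-k+1)^{(j+1)}}{2^{j+1}(k+1+\delta)^{(j+1)}} > 0$.

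\textbf{From the good terms to $|\nabla\psi|^{2k}$.} The next step is to bound $|\nabla\psi|^{2k}$ pointwise by the good terms. This is the algebraic heart: since $\lambda(F[\psi]) \in \bar\Gamma_k$, I will use that for $\lambda \in \bar\Gamma_k$ one has $|\lambda|^{k} \leq C(n)\max_\ell \sigma_\ell(\lambda)\,\text{(appropriate power)}$; more precisely, a vector in $\bar\Gamma_k$ with large $|\lambda|$ must have $\sigma_j(\lambda)$ large for the relevant $j$. But a cleaner route, and the one I expect to use, is to split $B_r$ into the region where $F[\psi]$ is ``small'' and where it is ``large.'' On $\{|\nabla\psi|^2 \gg |F[\psi]|\}$ one has $|\nabla^2\psi| = O(|\nabla\psi|^2)$, so $\NewtonT{k-1}[\psi]^a{}_b\nabla_a\psi\nabla^b\psi$ behaves like $\sigma_{k-1}$ of a matrix close to $\nabla\psi\otimes\nabla\psi - \frac12|\nabla\psi|^2 I$, and one computes that $|\nabla\psi|^{2j}\NewtonT{k-1-j}[\psi]^a{}_b\nabla_a\psi\nabla^b\psi \gtrsim |\nabla\psi|^{2k}$ for a suitable $j$ (this uses the appendix's convexity property of the Schouten tensor, which bounds the ``bad'' direction). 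On $\{|\nabla\psi|^2 \lesssim |F[\psi]|\}$, $|\nabla\psi|^{2k} \lesssim |F[\psi]|^k \lesssim \sigma_k(F[\psi]) + \text{lower order}$, which is already controlled since $F[\psi]\in\bar\Gamma_k$ forces $|F[\psi]| \leq C\sigma_k(F[\psi])^{1/k}$ when… — actually this needs care, since $\sigma_k$ can be small while $|F[\psi]|$ is not, for $\lambda$ near $\partial\Gamma_k$; here I would instead absorb $|\nabla^2\psi|$-type terms using the divergence structure directly. In either case the upshot is a pointwise inequality of the shape $e^{-q\psi}|\nabla\psi|^{2k} \leq C(n,\theta)\big[\text{good terms} + e^{-q\psi}\sigma_k(F[\psi])\big]$ valid up to the divergence term.

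\textbf{Testing and absorption.} Having the differential inequality, I fix a cutoff $\eta \in C_c^\infty(B_R)$ with $\eta \equiv 1$ on $B_r$, $0\leq\eta\leq 1$, $|\nabla\eta| \leq \frac{C}{R-r}$, and integrate the inequality $e^{-q\psi}|\nabla\psi|^{2k} \leq C(\text{good}) + C e^{-q\psi}\sigma_k(F[\psi]) - C\,\nabla_a(e^{-q\psi}X^a{}_b\nabla^b\psi)$ against $\eta^{2k}$. The divergence term, after integration by parts, produces $C\int \eta^{2k-1}|\nabla\eta|\,e^{-q\psi}|X^a{}_b\nabla^b\psi|$; since $|X| \lesssim |\nabla\psi|^{2k-2}$ (each summand is $|\nabla\psi|^{2j}\NewtonT{k-1-j}$ and $|\NewtonT{k-1-j}| \lesssim (|\nabla^2\psi| + |\nabla\psi|^2)^{k-1-j}$, and after the dichotomy above the relevant powers total $2k-2$), this term is of the form $C\int \eta^{2k-1}|\nabla\eta|\,e^{-q\psi}|\nabla\psi|^{2k-1}$ plus terms involving $\sigma_k(F[\psi])$. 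I then apply Young's inequality with exponents $\frac{2k}{2k-1}$ and $2k$ to split $\eta^{2k-1}|\nabla\eta||\nabla\psi|^{2k-1} \leq \epsilon\,\eta^{2k}|\nabla\psi|^{2k} + C_\epsilon |\nabla\eta|^{2k}$, absorb the $\epsilon\,\eta^{2k}e^{-q\psi}|\nabla\psi|^{2k}$ term into the left-hand side, and use $\int_{B_R}|\nabla\eta|^{2k}e^{-q\psi} \leq \frac{C}{(R-r)^{2k}}\int_{B_R}e^{-q\psi}$. The factor $(q+C)^\alpha$ arises because in tracking the constants: the constant $C$ multiplying the divergence term is $\leq C(n,\theta)/c_0 \leq C(n,\theta)/(q - \theta - \tfrac{k(n-2k)}{k+1})$ is bounded, but the $X$-matrix itself and the Young-inequality constant $C_\epsilon$ pick up polynomial-in-$q$ dependence (the coefficients in $X$ and the choice $\epsilon \sim 1/q$ needed for absorption when $q$ is large), yielding $(q+C)^\alpha$ with $\alpha = \alpha(n)$ — I expect $\alpha$ to be something like $2k$ or a fixed multiple, but only boundedness of the exponent by $\alpha(n)$ is claimed. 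This gives exactly the stated inequality.

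\textbf{Main obstacle.} The delicate point is the pointwise passage ``good terms $\gtrsim |\nabla\psi|^{2k}$.'' The quantities $\NewtonT{k-1-j}[\psi]^a{}_b\nabla_a\psi\nabla^b\psi$ and $|\nabla\psi|^{2j+2}\sigma_{k-1-j}(F[\psi])$ are both non-negative individually when $F[\psi]\in\bar\Gamma_k$, but showing their (weighted) sum dominates $|\nabla\psi|^{2k}$ requires understanding how the Hessian $\nabla^2\psi$ enters — it cannot be controlled by $|\nabla\psi|^2$ directly, but the specific structure $F[\psi] = \nabla^2\psi + \nabla\psi\otimes\nabla\psi - \tfrac12|\nabla\psi|^2 I$ together with the cone condition constrains $\nabla^2\psi$ enough. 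This is precisely the role of the convexity property proved in the Appendix (that $\lambda \mapsto$ (some function of $\sigma_\ell$'s) is convex / that one can bound the negative eigenvalue directions), and combining it with the identity in Corollary \ref{Cor:23IV20-C2} in the ``large gradient'' regime — following the Moser-iteration approach of Han \cite{Han04} and Gonzalez \cite{Gonzalez06-RemSing} adapted to the inequality and to the borderline $k = n/2$ — is the step that requires genuine work rather than routine computation.
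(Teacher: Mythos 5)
There is a genuine gap, and it sits exactly where you locate the ``algebraic heart'' of your argument. The step ``good terms $\gtrsim |\nabla\psi|^{2k}$'' that you treat as the delicate point is in fact unnecessary: since $\sigma_0(F[\psi]) \equiv 1$, the $j = k-1$ summand of the retained good terms $\sum_{j=0}^{k-1}\gamma_j |\nabla\psi|^{2j+2}\sigma_{k-1-j}(F[\psi])$ is literally $\gamma_{k-1}|\nabla\psi|^{2k}$, so the quantity to be estimated already appears on the favorable side of the inequality obtained from Corollary \ref{Cor:23IV20-C2} once $\delta$ is chosen small enough (the paper's \eqref{Eq:23IV20-R1}). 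Your substitute for this observation — the dichotomy between $\{|\nabla\psi|^2\gg|F[\psi]|\}$ and its complement, the bound $|F[\psi]|\lesssim\sigma_k(F[\psi])^{1/k}$ (which, as you yourself note, fails near $\partial\Gamma_k$), and an appeal to the Appendix convexity lemma (which plays no role in this proposition; it is only used for an alternative proof of Lemma \ref{Lem:20VI20-KeyNew} when $k>n/2$) — is not a proof and would not become one along those lines. A further, smaller issue: fixing $\delta=\theta$ does not make the coefficients $\frac{k(n-2k)+\delta(n-2k+1+j)}{k+1+\delta+j}-q$ negative under the stated hypothesis $q>\theta+\frac{k(n-2k)}{k+1}$ (try $k=1$, $j=0$, $n$ large); one must take $\delta=\delta(n,k,\theta)$ small, as the paper does.

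The second gap is in the integration-by-parts step and it is where the factor $(q+C)^{\alpha}$ actually originates. Your bound $|X|\lesssim|\nabla\psi|^{2k-2}$ for the flux term is unjustified, because the Newton tensors contain the Hessian and $|\nabla^2\psi|$ is not controlled by $|\nabla\psi|^2$; the correct estimate uses that $\NewtonT{k-1-j}[\psi]$ is non-negative definite with trace a multiple of $\sigma_{k-1-j}(F[\psi])$, so $\big|\NewtonT{k-1-j}[\psi]^a{}_b\nabla^b\psi\nabla_a\eta\big|\leq C\,\sigma_{k-1-j}(F[\psi])\,|\nabla\psi||\nabla\eta|$. After Young's inequality and absorption this leaves the intermediate integrals $\frac{1}{(R-r)^{2j+2}}\int_{B_R}\eta^{2(k-1-j)}e^{-q\psi}\sigma_{k-1-j}(F[\psi])\,dx$ for $0\leq j\leq k-2$ (the paper's \eqref{Eq:23IV20-R2}), which cannot be bounded by $\int e^{-q\psi}$ or by $\int e^{-q\psi}\sigma_k(F[\psi])$ directly, and your proposal contains no mechanism for them. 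The paper disposes of them by a downward recursion: Lemma \ref{Lem:22IV20-L1} with $p=0$, $\ell=k-2-j$ converts $\int\eta^{2(k-1-j)}e^{-q\psi}\sigma_{k-1-j}$ into $\sigma_{k-2-j}$-terms at the cost of a factor $(q+C)$, and iterating down to $\sigma_0$ produces $(q+C)^{\alpha}/(R-r)^{2k}\int e^{-q\psi}$. Your attribution of the $q$-dependence to the coefficients of $X$ and a choice $\epsilon\sim 1/q$ misses this mechanism, so even granting your pointwise claim the proof would remain incomplete.
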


In a similar vein, but by working with `large' negative $q$, we get:

\begin{proposition}[Cacciopoli-type inequality]\label{Prop:11V20-Cacci2}
Suppose $1 \leq k \leq n$, $\theta > 0$, $s > \theta + \frac{k(2k-n)}{k+1}$, $\psi \in C^2(B_1)$ and $F[\psi] \in \bar\Gamma_{k}$ in $B_1$. There exist $C = C(n,\theta) > 0$ and $\alpha = \alpha(n) > 0$ such that, for $0 < r < R < 1$,
\[
\int_{B_r} e^{s\psi} |\nabla \psi|^{2k}\,dx
		\leq  \frac{(s + C)^{\alpha} }{(R - r)^{2k}} \int_{B_R}  e^{s\psi} \,dx.
\]
\end{proposition}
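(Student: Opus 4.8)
The target is Proposition~\ref{Prop:11V20-Cacci2}, the ``negative-$q$'' Cacciopoli-type inequality. The plan is to run essentially the same divergence/integration-by-parts argument that underlies Proposition~\ref{Prop:11V20-Cacci1}, but to set $q = -s$ with $s$ large and positive so that the roles of the various ``good'' terms change sign favourably. Concretely, I would start from Corollary~\ref{Cor:23IV20-C2} with $q = -s$: the left-hand side is $\nabla_a\big\{e^{s\psi} X^a{}_b \nabla^b\psi\big\}$ with $X^a{}_b = \sum_{j=0}^{k-1} \frac{(n-k+1)^{(j)}}{2^j(k+1+\delta)^{(j)}}|\nabla\psi|^{2j}\NewtonT{k-1-j}[\psi]^a{}_b$, and on the right-hand side the coefficient of $|\nabla\psi|^{2j}\NewtonT{k-1-j}[\psi]^a{}_b\nabla_a\psi\nabla^b\psi$ is $\frac{k(n-2k)+\delta(n-2k+1+j)}{k+1+\delta+j} + s$, which is strictly positive once $s > \theta + \frac{k(2k-n)}{k+1}$ and $\delta$ is fixed suitably (say $\delta > 0$ small, using $\theta$). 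The point is that since $\psi \in C^2$ and $F[\psi]\in\bar\Gamma_k$, all the Newton tensors $\NewtonT{k-1-j}[\psi]$ are non-negative definite (this is the standard Gårding-cone fact, as used for Proposition~\ref{Prop:11V20-Cacci1}), so $X^a{}_b$ and the quadratic forms $\NewtonT{k-1-j}[\psi]^a{}_b\nabla_a\psi\nabla^b\psi$ are all $\geq 0$; likewise $\sigma_{k-1-j}(F[\psi])\geq 0$. Hence the $\delta$-multiple of $\sum_j (\cdots)|\nabla\psi|^{2j+2}\sigma_{k-1-j}(F[\psi])$ is a \emph{non-negative} term on the right, and with $q=-s$ the term $k e^{s\psi}\sigma_k(F[\psi])$ is also non-negative. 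Crucially, no term of the form $e^{s\psi}f$ (i.e. $e^{s\psi}\sigma_k$) needs to be \emph{absorbed} here — that is why the right-hand side of Proposition~\ref{Prop:11V20-Cacci2} has no $\int e^{s\psi}\sigma_k$ term, unlike Proposition~\ref{Prop:11V20-Cacci1}.

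The second step is to localise. Multiply the divergence identity by $\eta^{2k}$ for a cutoff $\eta\in C_c^\infty(B_R)$ with $\eta\equiv 1$ on $B_r$, $0\leq\eta\leq1$, $|\nabla\eta|\leq C/(R-r)$, and integrate over $B_R$. After integrating the divergence term by parts, the ``boundary'' contribution becomes $-2k\int \eta^{2k-1}e^{s\psi}\,\nabla_a\eta\, X^a{}_b\nabla^b\psi\,dx$. Since all the non-gradient terms produced on the right-hand side of Corollary~\ref{Cor:23IV20-C2} (after localisation) are non-negative, we may drop them and obtain
\[
c(n,\theta)\int_{B_R} \eta^{2k}\, e^{s\psi}\sum_{j=0}^{k-1} |\nabla\psi|^{2j}\NewtonT{k-1-j}[\psi]^a{}_b\nabla_a\psi\nabla^b\psi\,dx
 \;\leq\; 2k\int_{B_R} \eta^{2k-1} e^{s\psi}\,|\nabla\eta|\,\big|X^a{}_b\nabla^b\psi\big|\,dx,
\]
with $c(n,\theta)>0$ thanks to the sign computation above. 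The key algebraic point is then that the ``diagonal'' $j$-term of the left-hand integrand controls $|\nabla\psi|^{2k}$: for $F[\psi]\in\bar\Gamma_k$ one has $\sum_{j=0}^{k-1} c_j |\nabla\psi|^{2j}\NewtonT{k-1-j}[\psi]^a{}_b\nabla_a\psi\nabla^b\psi \gtrsim_n |\nabla\psi|^{2k}$ (this is precisely the estimate used in Proposition~\ref{Prop:11V20-Cacci1}; alternatively it follows from the Newton-tensor eigenvalue bounds, or from the convexity property recorded in the appendix), while on the right $|X^a{}_b\nabla^b\psi| \lesssim_n \sum_{j} |\nabla\psi|^{2j}\,\|\NewtonT{k-1-j}[\psi]\|\,|\nabla\psi| \lesssim_n |\nabla\psi|^{2k-1}$ on $\bar\Gamma_k$ (using $\|\NewtonT{m}[\psi]\| \lesssim_n \sigma_1(F[\psi])^m \lesssim_n |\nabla\psi|^{2m}$, since $\sigma_1(F[\psi]) = -\tfrac{n-2}{2}|\nabla\psi|^2 \le 0$ — wait, more carefully $\sigma_1(F[\psi])=\Delta\psi+(1-\tfrac n2)|\nabla\psi|^2$, and the needed pointwise bound $\|\NewtonT{m}\|\lesssim|\nabla\psi|^{2m}$ on $\bar\Gamma_k$ is the same one used in the proof of Proposition~\ref{Prop:11V20-Cacci1}).

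Thus we arrive at
\[
\int_{B_R} \eta^{2k}\,e^{s\psi}\,|\nabla\psi|^{2k}\,dx \;\leq\; C(n,\theta)\int_{B_R}\eta^{2k-1}\,e^{s\psi}\,|\nabla\eta|\,|\nabla\psi|^{2k-1}\,dx.
\]
Applying Young's inequality with exponents $\tfrac{2k}{2k-1}$ and $2k$ — writing $\eta^{2k-1}|\nabla\psi|^{2k-1}|\nabla\eta| = \big(\eta^{2k-1}|\nabla\psi|^{2k-1}\big)\cdot|\nabla\eta|$ and splitting $C|\nabla\eta| \le \varepsilon^{2k-1}\cdot\text{(first factor to the }\tfrac{2k}{2k-1}\text{)}$ absorbed, $+\,C_\varepsilon|\nabla\eta|^{2k}$ — we absorb $\varepsilon\int\eta^{2k}e^{s\psi}|\nabla\psi|^{2k}$ into the left side and are left with $C(n,\theta,\varepsilon)\int |\nabla\eta|^{2k}e^{s\psi}\,dx \le \frac{C(n,\theta)}{(R-r)^{2k}}\int_{B_R}e^{s\psi}\,dx$. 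This gives exactly the claimed bound. The dependence $(s+C)^\alpha$ with $\alpha=\alpha(n)$ enters only if one wants to keep track of how the constants in the ``diagonal'' coercivity estimate and in bounding $\|\NewtonT{k-1-j}\|$ blow up with the coefficients $\frac{(n-k+1)^{(j)}}{2^j(k+1+\delta)^{(j)}}$ and with $s$ through the quadratic-form coefficients $\frac{k(n-2k)+\delta(\cdots)}{k+1+\delta+j}+s$; tracking these polynomial-in-$s$ losses through Young's inequality produces the stated power $(s+C)^\alpha$, in complete parallel with Proposition~\ref{Prop:11V20-Cacci1}.

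The main obstacle I anticipate is \emph{sign bookkeeping}: one must verify that with $q=-s$ and the right choice of the free parameter $\delta$ (chosen in terms of $\theta$, using the hypothesis $s > \theta + \frac{k(2k-n)}{k+1}$), \emph{every} term other than the $|\nabla\psi|^{2k}$ term and the cutoff error term has the correct sign so that it can simply be discarded — in particular that $\frac{k(n-2k)+\delta(n-2k+1+j)}{k+1+\delta+j}+s \ge \theta' > 0$ uniformly in $j\in\{0,\dots,k-1\}$, and that the $-\delta\sum_j(\cdots)|\nabla\psi|^{2j+2}\sigma_{k-1-j}$ term, after moving to the other side, is genuinely $\ge 0$ (its coefficient $\frac{(n-k+1)^{(j+1)}}{2^{j+1}(k+1+\delta)^{(j+1)}}>0$, and $\sigma_{k-1-j}(F[\psi])\ge0$ on $\bar\Gamma_k$). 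Everything else is a routine Young/cutoff argument identical in structure to the proof of Proposition~\ref{Prop:11V20-Cacci1}, so once the sign ledger balances the proof is essentially immediate.
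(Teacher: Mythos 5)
Your overall plan (take $q=-s$ in Corollary~\ref{Cor:23IV20-C2}, localize with $\eta^{2k}$, absorb the cutoff contribution) is the right family of ideas, but two specific steps fail. First, the sign of the $\delta$-term. With your choice of the parameter $+\delta$, the last term of Corollary~\ref{Cor:23IV20-C2} is $-\delta e^{s\psi}\sum_j \frac{(n-k+1)^{(j+1)}}{2^{j+1}(k+1+\delta)^{(j+1)}}|\nabla\psi|^{2j+2}\sigma_{k-1-j}(F[\psi])$, which is non-positive on $\bar\Gamma_k$, not non-negative as you assert. After multiplying by $\eta^{2k}$ and integrating by parts, the correct consequence is
\begin{align*}
&\int_{B_R}\eta^{2k}e^{s\psi}\Big[k\sigma_k(F[\psi])+\sum_{j=0}^{k-1}(\cdots)\,|\nabla\psi|^{2j}\NewtonT{k-1-j}[\psi]^a{}_b\nabla_a\psi\nabla^b\psi\Big]\,dx\\
&\qquad\le 2k\int_{B_R}\eta^{2k-1}e^{s\psi}|\nabla\eta|\,\big|X^a{}_b\nabla^b\psi\big|\,dx
+\delta\int_{B_R}\eta^{2k}e^{s\psi}\sum_{j=0}^{k-1}(\cdots)\,|\nabla\psi|^{2j+2}\sigma_{k-1-j}(F[\psi])\,dx,
\end{align*}
and the last integral is exactly of the size of what you are trying to prove: its $j=k-1$ piece is a multiple of $\int\eta^{2k}e^{s\psi}|\nabla\psi|^{2k}$, while its $j\le k-2$ pieces contain $\sigma_{k-1-j}(F[\psi])$, which involves $\nabla^2\psi$ and is not dominated pointwise by the Newton quadratic forms on the left. ``Moving it to the other side'' only transfers this uncontrolled positive quantity to the side you cannot afford. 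The paper's proof sidesteps this by applying Corollary~\ref{Cor:23IV20-C2} with the parameter $-\delta$ (rising factorials $(k+1-\delta)^{(j)}$), so that the $\sigma$-sum enters with a plus sign and one gets \eqref{Eq:26V20-R1}: the divergence dominates $\delta e^{s\psi}\sum_j(\cdots)|\nabla\psi|^{2j+2}\sigma_{k-1-j}$, and it is this sum (whose $j=k-1$ term is $|\nabla\psi|^{2k}$, since $\sigma_0\equiv1$) that serves as the coercive quantity, not the quadratic-form sum. (Your coercivity claim for the quadratic-form sum is in fact true, but trivially so, via its $j=k-1$ term with $\NewtonT{0}=I$; it is not an estimate used in the proof of Proposition~\ref{Prop:11V20-Cacci1}.)

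Second, the pointwise bound $\|\NewtonT{m}[\psi]\|\lesssim_n|\nabla\psi|^{2m}$ on $\bar\Gamma_k$, which you invoke to get $|X^a{}_b\nabla^b\psi|\lesssim|\nabla\psi|^{2k-1}$, is false: on $\bar\Gamma_k$ one only knows $\NewtonT{m}[\psi]\ge0$ with $\tr\,\NewtonT{m}[\psi]=(n-m)\sigma_m(F[\psi])$, and $\sigma_m(F[\psi])$ involves $\nabla^2\psi$ (consider a point where $\nabla\psi=0$ and $\nabla^2\psi$ is large positive definite). No such bound is used in the proof of Proposition~\ref{Prop:11V20-Cacci1}; there the cutoff term is estimated by $C\sigma_{k-1-j}(F[\psi])|\nabla\psi||\nabla\eta|$, keeping the $\sigma_{k-1-j}$ factor. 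Consequently your reduction to $\int\eta^{2k}e^{s\psi}|\nabla\psi|^{2k}\le C\int\eta^{2k-1}e^{s\psi}|\nabla\eta||\nabla\psi|^{2k-1}$ followed by a single Young absorption does not go through. In the correct argument the cutoff terms still carry $\sigma_{k-1-j}(F[\psi])$; they are absorbed against the corresponding terms of the full sum $\sum_j|\nabla\psi|^{2j+2}\sigma_{k-1-j}$ (so one must retain the whole sum on the left, not only $|\nabla\psi|^{2k}$), and the leftover integrals $\frac{1}{(R-r)^{2j+2}}\int\eta^{2(k-1-j)}e^{s\psi}\sigma_{k-1-j}(F[\psi])\,dx$ are reduced step by step to $\sigma_0$ by iterating Lemma~\ref{Lem:22IV20-L1}, exactly as in \eqref{Eq:24IV20-R3}. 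Each step of that iteration costs a factor $|q|+C=s+C$, and this is precisely where the $(s+C)^\alpha$ in the statement comes from; it is not an optional bookkeeping refinement on top of an $s$-free argument.
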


We will use the following consequence of the identity  \eqref{Eq:22IV20-E2}.

\begin{lemma}\label{Lem:22IV20-L1}
Suppose $0 \leq \ell \leq n-1$, $p \geq 0$, $q \in \RR$, $\psi \in C^2(B_1)$ and $F[\psi] \in \bar\Gamma_{\ell + 1}$ in $B_1$. For every cut-off function $\eta \in C_c^\infty(B_1)$, we have
\begin{align*}
&\Big| (p + \ell + 1) \int_{B_1} \eta^{2(\ell + 1)} e^{-q\psi} |\nabla \psi|^p \sigma_{\ell + 1}(F[\psi])\,dx\\
	&\qquad 
	- p \int_{B_1} \eta^{2(\ell + 1)} e^{-q\psi}  |\nabla \psi|^{p-2} \NewtonT{\ell + 1}[\psi]^a{}_b \nabla^b \psi \nabla_a \psi\,dx\Big|\\
	&\qquad\qquad
	\leq (n-\ell)(|q| + \frac{p}{2} + C_n) \int_{B_1} \eta^{2\ell + 2} e^{-q\psi} |\nabla \psi|^{p+2} \sigma_{\ell}(F[\psi]) \,dx\\
		&\quad\qquad\qquad  + C_n \int_{B_1} \eta^{2\ell} e^{-q\psi}  |\nabla \eta|^2 |\nabla \psi|^{p} \sigma_{\ell}(F[\psi]) \,dx.
\end{align*}
\end{lemma}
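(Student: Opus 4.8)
The plan is to multiply the divergence identity \eqref{Eq:22IV20-E2} (with the given $\ell$) by $\eta^{2(\ell+1)}$, integrate over $B_1$, integrate the left-hand side by parts, and then control the lower-order terms by exploiting the positivity of the Newton tensor $\NewtonT{\ell}[\psi]$ on the cone $\bar\Gamma_{\ell+1}$. Since $\eta \in C_c^\infty(B_1)$, integration by parts produces no boundary term, and the integrated left-hand side equals $-2(\ell+1)\int_{B_1}\eta^{2\ell+1}\,e^{-q\psi}|\nabla\psi|^{p}\,\nabla_a\eta\,\NewtonT{\ell}[\psi]^a{}_b\nabla^b\psi\,dx$. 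Rearranging, the quantity inside the absolute value in the statement, namely $(p+\ell+1)\int\eta^{2(\ell+1)}e^{-q\psi}|\nabla\psi|^p\sigma_{\ell+1}(F[\psi]) - p\int\eta^{2(\ell+1)}e^{-q\psi}|\nabla\psi|^{p-2}\NewtonT{\ell+1}[\psi]^a{}_b\nabla^b\psi\nabla_a\psi$, equals the sum of three terms: (A) $-2(\ell+1)\int\eta^{2\ell+1}e^{-q\psi}|\nabla\psi|^{p}\nabla_a\eta\,\NewtonT{\ell}[\psi]^a{}_b\nabla^b\psi$; (B) $-(n-2\ell-q-\tfrac{p}{2}-1)\int\eta^{2(\ell+1)}e^{-q\psi}|\nabla\psi|^{p}\NewtonT{\ell}[\psi]^a{}_b\nabla_a\psi\nabla^b\psi$; and (C) $\tfrac{n-\ell}{2}\int\eta^{2(\ell+1)}e^{-q\psi}|\nabla\psi|^{p+2}\sigma_\ell(F[\psi])$.

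The key input, a standard property of the G\aa rding cone, is that $F[\psi]\in\bar\Gamma_{\ell+1}$ forces $\NewtonT{\ell}[\psi]$ to be symmetric and positive semidefinite, with $\operatorname{tr}\NewtonT{\ell}[\psi]=(n-\ell)\sigma_\ell(F[\psi])\ge 0$; consequently $0\le\NewtonT{\ell}[\psi]^a{}_b\nabla_a\psi\nabla^b\psi\le(n-\ell)\sigma_\ell(F[\psi])|\nabla\psi|^2$, and Cauchy--Schwarz for the semidefinite form $\NewtonT{\ell}[\psi]$ gives $|\nabla_a\eta\,\NewtonT{\ell}[\psi]^a{}_b\nabla^b\psi|\le(n-\ell)\sigma_\ell(F[\psi])|\nabla\eta||\nabla\psi|$. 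Estimating (C) directly and (B) using $|n-2\ell-q-\tfrac p2-1|\le|q|+\tfrac p2+C_n$ already yields multiples of $\int\eta^{2\ell+2}e^{-q\psi}|\nabla\psi|^{p+2}\sigma_\ell(F[\psi])$ with constant $(n-\ell)(|q|+\tfrac p2+C_n)$. For (A) I would apply the Cauchy--Schwarz bound with $X=\nabla\eta$, reducing to $2(\ell+1)(n-\ell)\int\eta^{2\ell+1}|\nabla\eta|\,e^{-q\psi}|\nabla\psi|^{p+1}\sigma_\ell(F[\psi])$, and then split $\eta^{2\ell+1}|\nabla\eta||\nabla\psi|^{p+1}\le\tfrac12\eta^{2\ell}|\nabla\eta|^2|\nabla\psi|^p+\tfrac12\eta^{2\ell+2}|\nabla\psi|^{p+2}$ by Young's inequality; this contributes a multiple of $\int\eta^{2\ell}e^{-q\psi}|\nabla\eta|^2|\nabla\psi|^p\sigma_\ell(F[\psi])$ together with an additional multiple of $\int\eta^{2\ell+2}e^{-q\psi}|\nabla\psi|^{p+2}\sigma_\ell(F[\psi])$. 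Collecting the three contributions and choosing $C_n$ depending only on $n$ (large enough to dominate quantities such as $\tfrac32+|n-2\ell-1|+\ell$ and $(\ell+1)(n-\ell)$, all controlled since $0\le\ell\le n-1$) gives the asserted inequality.

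I do not anticipate a genuine obstacle: this is a routine test-function/Cacciopoli computation once \eqref{Eq:22IV20-E2} is in hand. The points demanding a little care are (i) recording correctly the semidefiniteness $\NewtonT{\ell}[\psi]\ge0$ and the trace identity $\operatorname{tr}\NewtonT{\ell}[\psi]=(n-\ell)\sigma_\ell(F[\psi])$ on the \emph{closed} cone $\bar\Gamma_{\ell+1}$ (obtained as a limit of the strict inequalities valid on $\Gamma_{\ell+1}$), which is what legitimises the Cauchy--Schwarz step; (ii) the somewhat tedious bookkeeping of the $\ell$- and $n$-dependent constants; and (iii) the mild technical issue that for $p<2$ the factor $|\nabla\psi|^{p-2}$ appears only multiplied by $p$ and by the quadratic form $\NewtonT{\ell+1}[\psi]^a{}_b\nabla^b\psi\nabla_a\psi=O(|\nabla\psi|^2)$, so every integrand is in fact continuous; if desired, one may first establish \eqref{Eq:22IV20-E2} with $|\nabla\psi|^2$ replaced by $|\nabla\psi|^2+\varepsilon$ and then let $\varepsilon\to0^+$.
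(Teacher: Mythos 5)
Your proposal is correct and follows essentially the same route as the paper: multiply \eqref{Eq:22IV20-E2} by $\eta^{2(\ell+1)}$, integrate by parts, and control the boundary-free error terms using the positive semidefiniteness of $\NewtonT{\ell}[\psi]$ on $\bar\Gamma_{\ell+1}$ together with its trace identity $\tr \NewtonT{\ell}[\psi]=(n-\ell)\sigma_\ell(F[\psi])$, via Cauchy--Schwarz and Young. The paper compresses your Cauchy--Schwarz/Young step into the single estimate $|\eta\,\NewtonT{\ell}[\psi]^a{}_b\nabla^b\psi\nabla_a\eta|\leq C\sigma_\ell(F[\psi])(\eta^2|\nabla\psi|^2+|\nabla\eta|^2)$, but the argument is the same.
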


\begin{proof} Multiply \eqref{Eq:22IV20-E2} by $\eta^{2(\ell + 1)}$ and integrate over $B_1$ we obtain
\begin{align*}
&
\int_{B_1} \eta^{2(\ell + 1)} e^{-q\psi} \Big[(p + \ell + 1) |\nabla \psi|^p \sigma_{\ell + 1}(F[\psi]) - p|\nabla \psi|^{p-2} \NewtonT{\ell + 1}[\psi]^a{}_b \nabla^b \psi \nabla_a \psi\Big]\,dx\\
	&= - 2(\ell + 1)\int_{B_1}  \eta^{2\ell +1} e^{-q\psi} |\nabla \psi|^p \NewtonT{\ell}[\psi]^a{}_b\nabla^b \psi \nabla_a \eta\,dx
	\\
		&\qquad - (n - 2\ell - q - \frac{p}{2} - 1) \int_{B_1} \eta^{2(\ell + 1)}  e^{-q\psi} |\nabla \psi|^p \NewtonT{\ell}[\psi]^a{}_b \nabla_a \psi \nabla^b \psi  \,dx\\
		&\qquad 
		+ \frac{n-\ell}{2} \int_{B_1} \eta^{2(\ell + 1)} e^{-q\psi}   |\nabla\psi|^{p + 2}\sigma_{\ell} (F[\psi])\,dx.
\end{align*}
Noting that $\NewtonT{\ell}[\psi]$ is non-negative definite for $F[\psi] \in \bar\Gamma_{\ell+1}$ and $\tr(\NewtonT{\ell}[\psi]) = (n-\ell)\sigma_\ell(F[\psi])$, we have 
\[
\Big|\eta \sum_{a,b} \NewtonT{\ell}[\psi]^a{}_b\nabla^b \psi \nabla_a \eta\Big| \leq C\sigma_\ell(F[\psi]) (\eta^2 |\nabla \psi|^2 + |\nabla \eta|^2).
\]
Using this to estimate the first integral on the right hand side of the previous identity, we arrive at the conclusion.
\end{proof}

\begin{proof}[Proof of Proposition \ref{Prop:11V20-Cacci1}] We use $C$ to denote a generic positive constant depending only on $n$, $k$ and $\theta$. In our argument, we use the fact that $\NewtonT{\ell}[\psi]$ is non-negative definite for $0 \leq \ell \leq k -1$ whenever $F[\psi] \in \bar\Gamma_k$.  By Corollary \ref{Cor:23IV20-C2}, we can select a sufficiently small $\delta = \delta(\theta, n, k)$ such that for $q > \theta + \frac{k(n-2k)}{k+1}$ and $F[\psi] \in \bar\Gamma_k$, 
\begin{align}
&\nabla_a\Big\{e^{-q\psi}\Big[\sum_{j=0}^{k-1} \frac{(n - k + 1)^{(j)}}{2^j (k + 1 + \delta)^{(j)} }  |\nabla \psi|^{2j}\NewtonT{k-1-j}[\psi]^a{}_b
	\Big]\nabla^b \psi \Big\}
	\nonumber\\
		&\quad \leq  k  e^{-q\psi}\sigma_k(F[\psi]) 
		\nonumber\\
		&\qquad 
		- \delta e^{-q\psi} \sum_{j=0}^{k-1}  \frac{(n-k+1)^{(j + 1)}}{2^{j+1} (k+1+\delta)^{(j+1)} }
			|\nabla \psi|^{2j + 2} \sigma_{k-1-j}(F[\psi]).
	\label{Eq:23IV20-R1}
\end{align}

Let $\eta \in C_c^\infty(B_R)$ be a cut-off function such that $0 \leq \eta \leq 1$ in $B_R$, $\eta \equiv 1$ in $B_r$ and $|\nabla \eta| \leq \frac{C}{R - r}$. Multiplying \eqref{Eq:23IV20-R1} by $\eta^{2k}$ and integrating by parts we obtain
\begin{align}
 &\frac{1}{C} \int_{B_R} \eta^{2k} e^{-q\psi} \sum_{j=0}^{k-1}   
			|\nabla \psi|^{2j + 2} \sigma_{k-1-j}(F[\psi])\,dx
		\nonumber\\
	&\quad\leq  k \int_{B_R} \eta^{2k}  e^{-q\psi}\sigma_k(F[\psi]) \,dx\nonumber\\
		&\qquad + 2k\int_{B_R} \eta^{2k-1} \,e^{-q\psi} \sum_{j=0}^{k-1}  \frac{(n - k + 1)^{(j)}}{2^j (k + 1 + \delta)^{(j)} } |\nabla \psi|^{2j}\NewtonT{k-1-j}[\psi]^a{}_b \nabla^b \psi \nabla_a \eta\,dx.
		\label{Eq:26V20-A1}
\end{align}
Noting that $\NewtonT{k-1-j}[\psi]$ is non-negative definite and its trace is a multiple of $\sigma_{k-1-j}(F[\psi])$, we have
\[
\Big|\sum_{a,b} \NewtonT{k-1-j}[\psi]^a{}_b \nabla^b \psi \nabla_a \eta\Big|
	\leq C\sigma_{k-1-j}(F[\psi])|\nabla \psi||\nabla\eta|.
\]
It follows that
\begin{align*}
& \int_{B_R} \eta^{2k} e^{-q\psi} \sum_{j=0}^{k-1}   
			|\nabla \psi|^{2j + 2} \sigma_{k-1-j}(F[\psi])\,dx
		\\
	&\quad\leq  C \int_{B_R} \eta^{2k}  e^{-q\psi}\sigma_k(F[\psi]) \,dx\\
		&\qquad + C\int_{B_R} \eta^{2k-1} \,e^{-q\psi}\sum_{j=0}^{k-1}  |\nabla \eta| |\nabla \psi|^{2j + 1} \sigma_{k-1-j}(F[\psi])\,dx.
\end{align*}
Using the inequality
\[
\eta^{-1} |\nabla \eta| |\nabla \psi|^{2j+1} \leq \varepsilon |\nabla \psi|^{2j+2} + \frac{C}{\varepsilon^{2j+1}}\eta^{-2j-2} |\nabla\eta|^{2j+2} \text{ for all } \varepsilon > 0,
\]
we are led to
\begin{align}
& \int_{B_R} \eta^{2k} e^{-q\psi} \sum_{j=0}^{k-1}   
			|\nabla \psi|^{2j + 2} \sigma_{k-1-j}(F[\psi])\,dx
		\nonumber\\
	&\quad\leq  C \int_{B_R} \eta^{2k}  e^{-q\psi}\sigma_k(F[\psi]) \,dx
	\nonumber\\
		&\qquad + C \int_{B_R}  e^{-q\psi}\sum_{j=0}^{k-1} \frac{1}{(R - r)^{2j + 2}} \eta^{2(k - 1 - j)}  \sigma_{k-1-j}(F[\psi])\,dx.
		\label{Eq:23IV20-R2}
\end{align}

Consider the last integral on the right hand side of \eqref{Eq:23IV20-R2}. When $j \leq k - 2$, we use Lemma \ref{Lem:22IV20-L1} with $p = 0$, $\ell = k - 2 - j$ to obtain
\begin{align*}
& \frac{k - 1 - j}{n-k+2+j} \int_{B_R} \eta^{2(k - 1 - j)} e^{-q\psi} \sigma_{k - 1 - j}(F[\psi])\,dx\\
	&\qquad
	\leq (q + C) \int_{B_R} \eta^{2(k-1-j)} e^{-q\psi} |\nabla \psi|^{2} \sigma_{k-2-j}(F[\psi]) \,dx\\
		&\quad\qquad  + C \int_{B_R} \eta^{2(k-2-j)} e^{-q\psi} |\nabla \eta|^2  \sigma_{k-2-j}(F[\psi]) \,dx.
\end{align*}
Using the inequality
\[
\eta^{-2(1+j)} |\nabla \psi|^2 \leq \frac{\varepsilon (R - r)^{2j + 2}}{(q + C)^{1+\alpha}}  |\nabla\psi|^{2j + 4} + \frac{C  (q+C)^{\frac{1 + \alpha}{1+j}}}{ \varepsilon^{\frac{1}{j+1}}(R - r)^2} \eta^{-2(2+j)} \text{ for all } \alpha \geq 0, \varepsilon > 0
\]
in the first integral on the right hand side of the previous inequality, we obtain
\begin{align}
&\frac{(q + C)^{\alpha}}{ (R - r)^{2j+2}} \int_{B_R} \eta^{2(k - 1 - j)} e^{-q\psi} \sigma_{k - 1 - j}(F[\psi])\,dx
	\nonumber\\
	&\qquad
	\leq \varepsilon  \int_{B_R} \eta^{2k} e^{-q\psi} |\nabla \psi|^{2j + 4} \sigma_{k-2-j}(F[\psi]) \,dx
	\nonumber\\
		&\quad\qquad  + \frac{C  (q+C)^{\frac{(1+\alpha)(2+j)}{1+j}}}{ \varepsilon^{\frac{1}{j+1}}(R - r)^{2j+4}} \int_{B_R} \eta^{2(k-2-j)} e^{-q\psi}  \sigma_{k-2-j}(F[\psi]) \,dx.
	\label{Eq:24IV20-R3}
\end{align}

Using \eqref{Eq:24IV20-R3} repeatedly for $j = 0$, $j = 1$, \ldots, $j = k-2$ to treat the last integral on the right hand side of \eqref{Eq:23IV20-R2}, we obtain for some $\alpha = \alpha(n,k) > 0$ that
\begin{align*}
& \int_{B_R} \eta^{2k} e^{-q\psi} \sum_{j=0}^{k-1}   
			|\nabla \psi|^{2j + 2} \sigma_{k-1-j}(F[\psi])\,dx
		\nonumber\\
	&\quad\leq  C \int_{B_R} \eta^{2k}  e^{-q\psi}\sigma_k(F[\psi]) \,dx
		 + \frac{(q + C)^{\alpha} }{(R - r)^{2k}} \int_{B_R}  e^{-q\psi} \,dx.
\end{align*}
The conclusion follows.
\end{proof}

\begin{proof}[Proof of Proposition \ref{Prop:11V20-Cacci2}] We adapt the proof of Proposition \ref{Prop:11V20-Cacci1}. We use $C$ to denote a generic positive constant depending only on $n$, $k$ and $\theta$. Appying Corollary \ref{Cor:23IV20-C2}, we can select a sufficiently small $\delta = \delta(\theta, n, k) \in (0,1)$ such that for $s = -q > \frac{k(2k-n)}{k+1} + \theta$ and $F[\psi] \in \bar\Gamma_k$, 
\begin{align}
&\nabla_a\Big\{e^{s\psi}\Big[\sum_{j=0}^{k-1} \frac{(n - k + 1)^{(j)}}{2^j (k + 1 - \delta)^{(j)} }  |\nabla \psi|^{2j}\NewtonT{k-1-j}[\psi]^a{}_b
	\Big]\nabla^b \psi \Big\}
	\nonumber\\
		&\quad \geq \delta e^{s\psi} \sum_{j=0}^{k-1}  \frac{(n-k+1)^{(j + 1)}}{2^{j+1} (k+1-\delta)^{(j+1)} }
			|\nabla \psi|^{2j + 2} \sigma_{k-1-j}(F[\psi]).
	\label{Eq:26V20-R1}
\end{align}

As before, we let $\eta \in C_c^\infty(B_R)$ be a cut-off function such that $0 \leq \eta \leq 1$ in $B_R$, $\eta \equiv 1$ in $B_r$ and $|\nabla \eta| \leq \frac{C}{R - r}$. Multiplying \eqref{Eq:26V20-R1} by $\eta^{2k}$ and integrating we obtain (compare \eqref{Eq:26V20-A1})
\begin{align*}
 &\frac{1}{C} \int_{B_R} \eta^{2k} e^{s\psi} \sum_{j=0}^{k-1}   
			|\nabla \psi|^{2j + 2} \sigma_{k-1-j}(F[\psi])\,dx
		\\
	&\quad\leq - 2k\int_{B_R} \eta^{2k-1} \,e^{s\psi} \sum_{j=0}^{k-1}  \frac{(n - k + 1)^{(j)}}{2^j (k + 1 - \delta)^{(j)} } |\nabla \psi|^{2j}\NewtonT{k-1-j}[\psi]^a{}_b \nabla^b \psi \nabla_a \eta\,dx.
\end{align*}
We can now argue as in exactly as in the proof of Proposition \ref{Prop:11V20-Cacci1} to obtain the conclusion.
\end{proof}

\subsection{Proof of Propositions \ref{Prop:25V20-P1}}
\label{SSec:ProofMoser}

\begin{proof}[Proof of Proposition \ref{Prop:25V20-P1}] We use Moser's iteration. It suffices to give the proof for some $q_1 > 0$. The fact that the result holds for all $q_1 > 0$ follows by interpolation.

Fix some $\theta >0$ and $q_1 > \theta +  \frac{k(n-2k)}{k+1}$. Let $\chi = \frac{n}{n-2k} > \frac{p}{p-1}$ if $1 < k < n/2$ and $\chi > \frac{p}{p-1}$ be arbitrary if $k = n/2$. Below we use $C$ to denote some positive constant depending only on $n$, $k$, $\theta$ and $\chi$.

By the Cacciopoli-type inequality in Proposition \ref{Prop:11V20-Cacci1}, there exists $\alpha = \alpha(n,k,\theta) > 0$ such that for all $q \geq q_1$ and $1/2 < r < R < 1$, 
\begin{equation}
\int_{B_r} e^{-q\psi} |\nabla \psi|^{2k}\,dx
		\leq  C \int_{B_{R}}   e^{-q\psi} f \,dx
		 + \frac{Cq^\alpha}{(R - r)^{2k}} \int_{B_R}  e^{-q\psi} \,dx.
	\label{Eq:03VIII20-A1}
\end{equation}
Applying $W^{1,2k}$-Sobolev's inequality to $e^{-\frac{q}{2k}\psi}$, we thus have
\begin{eqnarray*}
\Big(\int_{B_r}  e^{-q\chi \psi}\,dx\Big)^{1/\chi}
	&\leq& C\int_{B_r} [q^{2k}e^{-q\psi} |\nabla \psi|^{2k} + e^{-q\psi}]\,dx\\
	&\stackrel{\eqref{Eq:03VIII20-A1}}{\leq}& Cq^{2k} \int_{B_{R}}   e^{-q\psi} f \,dx
		 + \frac{Cq^{\alpha + 2k}}{(R - r)^{2k}} \int_{B_R}  e^{-q\psi} \,dx.
\end{eqnarray*}
On the other hand, by H\"older's and Young's inequalities,
\begin{align*}
\int_{B_{R}}   e^{-q\psi} f \,dx 
	&= \int_{B_{R}}   e^{-\frac{\chi}{(\chi-1)p}q\psi} e^{-(1 - \frac{1}{p}  - \frac{1}{(\chi-1)p})q\psi} f \,dx\\
	&\leq \Big(\int_{B_{R}}   e^{-\chi q\psi}\,dx\Big)^{\frac{1}{(\chi-1)p}} \Big(\int_{B_R} e^{-q\psi} \,dx\Big)^{\frac{\chi p - \chi - p}{(\chi-1)p}} \Big( \int_{B_R} f^p \,dx\Big)^{1/p}\\
	&\leq \varepsilon \Big(\int_{B_{R}}   e^{- \chi q\psi}\,dx\Big)^{1/\chi} + C\varepsilon^{-\frac{(\chi - 1)p}{\chi p - \chi  - p}}   \Big(\int_{B_R} e^{-q\psi}  \,dx\Big)\Big( \int_{B_R} f^p \,dx\Big)^{\frac{\chi-1}{\chi p - \chi - p}}.
\end{align*}
Therefore we have for some $\beta = \beta(n,k,\theta,\chi) > 0$ that
\begin{align*}
\|e^{-\psi}\|_{L^{q\chi}(B_r)}^{q} 
	&\leq \frac{1}{2} \|e^{-\psi}\|_{L^{q\chi}(B_R)}^{q} 
		 + C q^\beta\Big(\frac{1}{(R - r)^{2k}} + \|f\|_{L^p(B_1)}^{\frac{(\chi-1)p}{\chi p - \chi - p}} \Big) 
		\| e^{-\psi}\|_{L^q(B_R)}^q.
\end{align*}
A standard iteration (see \cite[Lemma 1.1]{GiaquintaGiusti}) then gives 
\begin{equation}
\|e^{-\psi}\|_{L^{q\chi}(B_r)}^{q} 
	\leq C q^\beta\Big(\frac{1}{(R - r)^{2k}} + \|f\|_{L^p(B_1)}^{\frac{(\chi-1)p}{\chi p - \chi - p}} \Big) \| e^{-\psi}\|_{L^q(B_R)}^q
	\label{Eq:05VIII20-A1}
\end{equation}
for all $1/2 < r < R < 1$ and $q \geq q_1$. 

Now, consider $r_j = \frac{1}{2} + \frac{1}{2^j}$ and $q_j = q_1 \,\chi^{j-1}$ for $j \geq 1$. By \eqref{Eq:05VIII20-A1}, we have
\begin{align*}
\|e^{-\psi}\|_{L^{q_{j+1}}(B_{r_{j+1}})}
	\leq  C^{q_1^{-1} \chi^{-j}} \chi^{\beta\,q_1^{-1} j\,\chi^{-j}} \Big(2^{2jk} + \|f\|_{L^p(B_1)}^{\frac{(\chi-1)p}{\chi p - \chi - p}}\Big)^{q_1^{-1}\chi^{-j}} \|e^{-\psi}\|_{L^{q_j}(B_{r_j})}.
\end{align*}
It follows that
\begin{align*}
\|e^{-\psi}\|_{L^{q_{m+1}}(B_{r_{m+1}})}
	&\leq  \prod_{j=1}^m \Big\{C^{q_1^{-1} \chi^{-j}} \chi^{\beta\,q_1^{-1} j\,\chi^{-j}} \Big(2^{2jk} + \|f\|_{L^p(B_1)}^{\frac{(\chi-1)p}{\chi p - \chi - p}}\Big)^{q_1^{-1}\chi^{-j}}\Big\} \|e^{-\psi}\|_{L^{q_1}(B_{r_1})}\\
	&\leq C(n,k,\theta,\chi, \|f\|_{L^p(B_1)}) \|e^{-\psi}\|_{L^{q_1}(B_{r_1})} \text{ for all } m \geq 1.
\end{align*}
Sending $m \rightarrow \infty$ we obtain the conclusion.
\end{proof}

\section{Proof of Theorem \ref{main'}}\label{Sec:ProofMain}

Fix some $0 < \alpha' \leq \alpha < 1$. For $\ell \geq 0$, let $C^{\ell,\alpha}_+(\SSphere^n)$ (resp. $C^{\ell}_+(\SSphere^n)$)  denote the cone of positive functions in $C^{\ell,\alpha}(\SSphere^n)$ (resp. $C^\ell(\SSphere^n)$). 

Consider first the case $K \in C^{2,\alpha}_+(\SSphere^n)$ satisfying the non-degeneracy condition \eqref{nondegeneracy}. The general case where $K \in C^2_+(\SSphere^n)$ will be dealt with via approximation. 

For any $\mu\in[0,1]$, let $K_\mu = \mu K+(1-\mu)2^{-k} \Big(\begin{array}{c}n\\k\end{array}\Big)$ and consider the equation
\begin{equation}
\sigma_{k}(\lambda(A_{g_{v}})) = K_{\mu}, \quad\lambda(A_{g_v})\in\Gamma_{k} \text{ on } \SSphere^n.
\label{Eq:27VII20-Xmu}
\end{equation}
By Theorem \ref{thm9999} and first and second derivative estimates for the $\sigma_k$-Yamabe equation (see \cite{Chen05, GW03-IMRN}, \cite[Theorem 1.10]{Li09-CPAM}, \cite[Theorem 1.20]{LiLi03}, \cite{Wang06}), we can select $C_*$ sufficiently large such that, for $\mu \in (0,1]$, all positive solutions to \eqref{Eq:27VII20-Xmu} belong to the set
\begin{equation}
\mcO = \Big\{\tilde v \in C^{4,\alpha'}(\SSphere^n): \|\ln \tilde v\|_{C^{4,\alpha'}(\SSphere^n)} < \pi^{\alpha-\alpha'} C_*, \lambda(A_{g_{\tilde v}}) \in \Gamma_k\Big\}.
	\label{Eq:25VI20-ODef}
\end{equation}

Consider the nonlinear operator $F_{\mu}: \mcO \rightarrow C^{2,\alpha'}(\SSphere^{n})$ defined by
\begin{equation}
F_{\mu}[v]:=\sigma_{k}(\lambda(A_{g_{v}}))-K_{\mu},\quad\forall~v\in \mcO .
	\label{Eq:25VI20-FmuDef}
\end{equation}
By \cite{Li89-CPDE}, the degree $\deg(F_\mu, \mcO, 0)$ is well-defined and is independent of $\mu \in (0,1]$. This degree is also independent of $\alpha' \in (0,\alpha]$ (see \cite[Theorem B.1]{Li95-JDE}). We proceed to compute this degree for small $\mu$ and some $\alpha' \in (0,\alpha)$.

Our computation closely follows \cite{Li95-JDE}, using the Lyapunov-Schmidt reduction. Let us briefly give an outline. We start by parametrizing  $C^{4,\alpha'}_+(\SSphere^n)$ as $\pi(\mcS_0 \times B)$ for some parametrization $\pi$ (see Lemma \ref{Lem:24VI20-L1}) where the $B$-factor takes into account the the action of the M\"obius group on $\SSphere^n$ and where the element $1 \in \mcS_0$ corresponds to the so-called standard bubbles on $\SSphere^n$. An important property of this parametrization is that, for every given tubular neighborhood $\pi(\mcN \times B)$ of $\pi(\{1\} \times B)$, all solutions of \eqref{Eq:27VII20-Xmu} belongs to $\pi(\mcN \times B)$ if $\mu$ is sufficiently small; see Lemma \ref{Lem:25VI20-wConv}. Thus, by the excision property, it suffices to compute $\deg(F_\mu,\pi(\mcN \times B), 0)$. We then appeal to the implicit function theorem to show that, for every given $\xi \in B$, there is a unique $w_{\xi,\mu} \in \mcN$ such that the `$\mcS_0$-component' of $F_\mu(\pi(w_{\xi,\mu},\xi))$ is in fact zero; see Proposition \ref{prop233}. The problem of solving for zeroes of $F_\mu$ in $\pi(\mcN \times B)$ thus reduces to solve for zeroes of certain finite dimensional map $\xi \mapsto \Lambda_{\xi,\mu}$, whose degree was computed in \cite{Li95-JDE}. Finally, we complete the argument by showing $(-1)^n\deg(F_\mu,\pi(\mcN \times B), 0)$ is the degree of $\Lambda_{\xi,\mu}$; see Theorem \ref{Thm:dff}.

Let us now give the details, starting with the declared parametrization of $C^{4,\alpha'}_+(\SSphere^n)$. For $P\in\SSphere^{n}$ and $1\leq t<\infty$, we define a conformal transformation $\varphi_{P,t}$ on $\SSphere^{n}$ 
by sending $y$ to $ty$, where $y$ is the stereographic projection coordinates of points while the stereographic projection is performed with $P$ as the north pole to the equatorial plane of $\SSphere^{n}$.

For a conformal transformation $\varphi:\SSphere^{n}\rightarrow \SSphere^{n}$ as above and a function $v$ defined on $\SSphere^n$, we let 
\[
T_{\varphi}v:=v\circ\varphi|\det d\varphi|^{\frac{n-2}{2n}}
\]
where $d\varphi$ denotes the Jacobian of $\varphi$. In particular, the pull-back metric of $g_v = v^{\frac{4}{n-2}}\roundg$ under $\varphi$ is given by $\varphi^* (g_v) = g_{T_\varphi v}$.

Let $B$ denote the open unit ball in $\RR^{n+1}$ and let
\begin{equation*}
\mcS_{0} =\Big\{v\in C^{4,\alpha'}_+(\SSphere^{n}):\int_{\SSphere^{n}}x |v(x)|^{\frac{2n}{n-2}}\,dv_{\roundg}(x) =0\Big\}.
\end{equation*}
Note that our definition of $\mcS_0$ differs slightly from that in \cite{Li95-JDE} in that we do not require $g_v$ to have unit volume.

For $w \in \mcS_0$ and $\xi \in B$, let $\pi(w,\xi)$ be defined by $\pi(w,0) = w$ and 
\[
\pi(w, \xi) = T_{\varphi_{P,t}^{-1}}(w)  \text{ with } P = \frac{\xi}{|\xi|} \text{ and } t = (1 - |\xi|)^{-1} \text{ when } \xi \neq 0.
\]

\begin{lemma}\label{Lem:24VI20-L1}
The map $\pi: \mcS_0 \times B \mapsto C^{4,\alpha'}_+(\SSphere^n)$ is a $C^2$ diffeomorphism. 
\end{lemma}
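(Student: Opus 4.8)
## Proof proposal for Lemma \ref{Lem:24VI20-L1}

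The plan is to construct an explicit inverse and verify smoothness in both directions, splitting the analysis according to whether $\xi = 0$ or $\xi \neq 0$. The key geometric fact driving everything is that the conformal (M\"obius) group of $\SSphere^n$ acts on positive functions via the weighted pull-back $T_\varphi$, and that for each $v \in C^{4,\alpha'}_+(\SSphere^n)$ there is a \emph{unique} conformal transformation $\varphi$ of the form $\varphi_{P,t}$ (together with the identity) that normalizes $v$ so that the center of mass $\int_{\SSphere^n} x\,|T_\varphi v|^{\frac{2n}{n-2}}\,dv_{\roundg}(x)$ vanishes. This is the classical fact (due to, e.g., the analysis underlying \cite{Li95-JDE}) that the map from the conformal group (modulo rotations) to the open ball $B$, recording the center of mass, is a diffeomorphism; here rotations are factored out because $\mcS_0$ itself is not rotation-invariant in the naive sense but the parametrization only uses the one-parameter families $\varphi_{P,t}$. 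First I would make the identification of the parameter space explicit: the assignment $\xi \mapsto (P,t) = \big(\tfrac{\xi}{|\xi|}, (1-|\xi|)^{-1}\big)$ is a diffeomorphism from $B \setminus \{0\}$ onto $\SSphere^n \times (1,\infty)$, and $\xi = 0$ corresponds to $t = 1$ (the identity map), so $B$ is a smooth compactification-at-the-center of the relevant part of the conformal group.

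Next I would check that $\pi$ is well-defined and continuous, including across $\xi = 0$: as $\xi \to 0$ one has $t \to 1$, so $\varphi_{P,t} \to \mathrm{id}$ and $T_{\varphi_{P,t}^{-1}}(w) \to w = \pi(w,0)$, with the convergence taking place in $C^{4,\alpha'}$ because $T_{\varphi_{P,t}^{-1}}$ depends smoothly on $(P,t)$ in operator norm on $C^{4,\alpha'}(\SSphere^n)$ and the degeneracy $P = \xi/|\xi|$ is harmless since $t \to 1$ kills the $P$-dependence to all orders. I would record that $\pi(\cdot,\xi): \mcS_0 \to C^{4,\alpha'}_+(\SSphere^n)$ is, for each fixed $\xi$, a linear isomorphism onto its image (namely $w \mapsto T_{\varphi_{P,t}^{-1}}w$), and that $\pi$ is injective: if $\pi(w_1,\xi_1) = \pi(w_2,\xi_2)$ then the two sides have the same center of mass, but the center of mass of $T_{\varphi_{P,t}^{-1}}w$ (for $w \in \mcS_0$, so $w$ has zero center of mass) is exactly $\xi$ under the above identification—this is the uniqueness statement for the conformal normalization—forcing $\xi_1 = \xi_2$ and then $w_1 = w_2$. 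Surjectivity is the same uniqueness statement read the other way: given $v$, solve for the unique $\xi = \xi(v) \in B$ such that $T_{\varphi_{P,t}}v \in \mcS_0$, and set $w = T_{\varphi_{P,t}}v$; then $\pi(w,\xi) = v$.

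For the $C^2$ statement I would exhibit the inverse map $v \mapsto (w(v),\xi(v))$ and differentiate. The component $\xi(v)$ is defined implicitly by the vector equation $G(v,\xi) := \int_{\SSphere^n} x\,|T_{\varphi_{P,t}}v|^{\frac{2n}{n-2}}\,dv_{\roundg}(x) = 0$; here $G$ is $C^2$ in $(v,\xi)$ because $T_\varphi$ is smooth in the conformal parameters and bounded linear in $v$, the map $t \mapsto |t|^{\frac{2n}{n-2}}$ is $C^2$ on $\RR$ (since $\tfrac{2n}{n-2} > 2$ for $n \geq 3$), and integration over the compact sphere preserves smoothness; the $C^{4,\alpha'}$ regularity of $v$ is more than enough to make the integrand $C^0$ and to differentiate under the integral sign. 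The implicit function theorem then applies once I check that $D_\xi G(v,\xi)$ is invertible at solution points, which is precisely the nondegeneracy of the center-of-mass map on the conformal group—this is where I would cite the corresponding computation in \cite{Li95-JDE} (the derivative is, up to a positive factor, the identity on $\RR^{n+1}$ restricted appropriately, because near a normalized $v$ the center of mass responds to an infinitesimal conformal motion in the expected nondegenerate way). This yields $\xi \in C^2$ as a function of $v$, and then $w(v) = T_{\varphi_{P(v),t(v)}}v$ is $C^2$ as a composition of $C^2$ maps. Finally $\pi$ itself is $C^2$: away from $\xi = 0$ this is clear since $(P,t)$ is a smooth function of $\xi$ and $T_{\varphi_{P,t}^{-1}}w$ is smooth in all arguments; at $\xi = 0$ one must check that the singular coordinate change $\xi \mapsto P = \xi/|\xi|$ does not obstruct $C^2$-smoothness, which follows because $T_{\varphi_{P,t}^{-1}}$ and all its derivatives up to order two in the conformal parameters, when re-expressed through $\xi$, extend continuously to $\xi = 0$ (the offending powers of $|\xi|$ from differentiating $\xi/|\xi|$ are compensated by vanishing of the corresponding derivatives of $T_{\varphi_{P,t}^{-1}}$ at $t = 1$, since $\varphi_{P,1} = \mathrm{id}$ independently of $P$). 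The main obstacle is this last smoothness-at-the-origin bookkeeping together with the invertibility of $D_\xi G$; both are essentially present in \cite{Li95-JDE}, and I would organize the proof so as to isolate them as the only nontrivial points, handling the rest by the explicit inverse just described.
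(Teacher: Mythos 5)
Your proposal follows essentially the same route as the paper: bijectivity of $\pi$ comes from the existence/uniqueness of the normalizing M\"obius transformation (cited in the paper as \cite[Lemma 5.3]{Li95-JDE}), and $C^2$-regularity of $\pi^{-1}$ comes from the implicit function theorem applied to the center-of-mass equation, with invertibility of the $\xi$-derivative imported from the computation in \cite[Lemma 5.4]{Li95-JDE}. Two small remarks. First, your injectivity step asserts that the center of mass of $\pi(w,\xi)=T_{\varphi_{P,t}^{-1}}(w)$ is \emph{exactly} $\xi$; this is false as stated (it depends on $w$), but it is not load-bearing: what you actually need, and what you invoke in the same breath, is the uniqueness of the $\varphi_{P,t}$ normalizing a given $v$, which already yields $\xi_1=\xi_2$ and then $w_1=w_2$. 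Second, the paper sets up the implicit equation after a change of variables, as $F(u,\xi)=\frac{1}{|\SSphere^n|}\int_{\SSphere^n}|u(x)|^{\frac{2n}{n-2}}\varphi_{P,t}^{-1}(x)\,dv_{\roundg}=0$, so that $\xi$ enters only through the smooth kernel $\varphi_{P,t}^{-1}(x)$ and $u$ is never composed with a $\xi$-dependent map; this sidesteps the joint-smoothness bookkeeping for $(v,\xi)\mapsto T_{\varphi_{P,t}}v$ (and the $\xi=0$ issue) that your formulation has to argue around, and you may wish to adopt it.
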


\begin{proof}
The fact that $\pi$ is a bijection follows from \cite[Lemma 5.3]{Li95-JDE}. It is also clear that $\pi$ is $C^2$. Next, note that $u = \pi(w,\xi)$ if and only if 
\[
F(u,\xi) = \frac{1}{|\SSphere^n|}\int_{\SSphere^n} |u(x)|^{\frac{2n}{n-2}} \varphi_{P,t}^{-1}(x)\,dv_{\roundg} = 0,
\]
where $P = \frac{\xi}{|\xi|}$ and $t = (1 - |\xi|)^{-1}$. Thus, by the implicit function theorem, to show that $\pi^{-1}$ is $C^2$, it suffices to show that $\partial_\xi F(u_0,\xi_0)$ is an isomorphism for every $u_0 \in C^{4,\alpha'}_+(\SSphere^n)$ and $\xi_0 \in B$ such that $F(u_0,\xi_0) = 0$. In \cite[Lemma 5.4]{Li95-JDE}, this was done in Sobolev spaces. The same computation can be repeated verbatim giving the conclusion. We skip the details.
\end{proof}

Using Lemma \ref{Lem:24VI20-L1}, we will in the sequel `identify' an element $v \in C^{4,\alpha'}_+(\SSphere^n)$ with the pair $(w,\xi) = \pi^{-1}(v) \in \mcS_0 \times B$. As a consequence of Theorem \ref{thm9999} and Liouville-type theorem we have:

\begin{lemma}\label{Lem:25VI20-wConv}
Let $n\geq3$, $n/2\leq k\leq n$, and $0 < \alpha' < \alpha < 1$. Suppose that $K \in C^{2,\alpha}_+(\SSphere^n)$ satisfies the non-degeneracy condition \eqref{nondegeneracy}. If $v_{\mu_j} = \pi(w_{\mu_j}, \xi_{\mu_j})$ solves \eqref{Eq:27VII20-Xmu} for some sequence $\mu_j \rightarrow 0^+$, then $\xi_{\mu_j}$ stays in a compact subset of $B$ and 
\[
\lim_{j \rightarrow \infty} \|w_{\mu_j} - 1\|_{C^{4,\alpha'}(\SSphere^n)} = 0.
\]
\end{lemma}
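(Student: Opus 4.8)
The plan is to run a contradiction argument, combining the uniform a priori estimate of Theorem \ref{thm9999} with the Liouville-type classification of entire solutions and the diffeomorphism property of $\pi$ from Lemma \ref{Lem:24VI20-L1}. The first step is to observe that Theorem \ref{thm9999} applies to \eqref{Eq:27VII20-Xmu} with $\mu = \mu_j$ \emph{uniformly in $j$}. Indeed, $K_{\mu_j} = \mu_j K + (1-\mu_j)2^{-k}\binom{n}{k}$ is positive and, for $\mu_j>0$, satisfies \eqref{nondegeneracy} (because $\nabla_{\roundg}K_{\mu_j} = \mu_j\nabla_{\roundg}K$ and $\Delta_{\roundg}K_{\mu_j} = \mu_j\Delta_{\roundg}K$); it lies between two fixed positive constants, so $|\ln K_{\mu_j}|$ is bounded independently of $j$; and since also $\nabla^2_{\roundg}K_{\mu_j} = \mu_j\nabla^2_{\roundg}K$ with $\|\nabla^2_{\roundg}K\|_{C^0(\SSphere^n)}>0$ ($K$ being non-constant by \eqref{nondegeneracy}), the normalized quantities $\tfrac{1}{\|\nabla^2_{\roundg}K_{\mu_j}\|_{C^0}}\big[\,|\nabla_{\roundg}K_{\mu_j}|_{\roundg}+|\Delta_{\roundg}K_{\mu_j}|\,\big]$ and $\tfrac{1}{\|\nabla^2_{\roundg}K_{\mu_j}\|_{C^0}}\nabla^2_{\roundg}K_{\mu_j}$ do not depend on $j$ (they agree with those of $K$), hence have a uniform positive lower bound and a fixed modulus of continuity respectively. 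Thus $\|\ln v_{\mu_j}\|_{C^2(\SSphere^n)}\le C_*$ with $C_*$ independent of $j$, and the higher regularity theory for the $\sigma_k$-Yamabe equation (first and second derivative estimates, Evans--Krylov, Schauder) — using that $\{K_{\mu_j}\}$ is bounded in $C^{2,\alpha}(\SSphere^n)$ since $K\in C^{2,\alpha}$ — upgrades this to $\|\ln v_{\mu_j}\|_{C^{4,\alpha}(\SSphere^n)}\le C$ with $C$ independent of $j$.

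Next, I would argue by contradiction. If the conclusion fails, then after passing to a subsequence (not relabeled) we have either $|\xi_{\mu_j}|\to1$ or $\|w_{\mu_j}-1\|_{C^{4,\alpha'}(\SSphere^n)}\ge\eps_0$ for some fixed $\eps_0>0$. From the uniform $C^{4,\alpha}$ bound, a compactness argument (Arzela--Ascoli) and interpolation — here the hypothesis $0<\alpha'<\alpha$ is used — I extract a further subsequence with $v_{\mu_j}\to v_\infty$ in $C^{4,\alpha'}(\SSphere^n)$. The limit $v_\infty$ is positive (as $\ln v_{\mu_j}$ is uniformly bounded), and letting $j\to\infty$ in \eqref{Eq:27VII20-Xmu} with $K_{\mu_j}\to2^{-k}\binom{n}{k}$ shows that $\sigma_k(\lambda(A_{g_{v_\infty}}))=2^{-k}\binom{n}{k}$, $\lambda(A_{g_{v_\infty}})\in\Gamma_k$ on $\SSphere^n$. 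Pulling back by the stereographic projection $\Phi$, the function $u_\infty=\big(\tfrac{2}{1+|y|^2}\big)^{\frac{n-2}{2}}(v_\infty\circ\Phi)$ is a positive entire solution of $\sigma_k(\lambda(A^{u_\infty}))=2^{-k}\binom{n}{k}$, $\lambda(A^{u_\infty})\in\Gamma_k$ on $\RR^n$ with $u_\infty(y)\le C(1+|y|)^{-(n-2)}$ (since $v_\infty$ is bounded), so by the Liouville-type theorem \cite[Theorem 1.3]{LiLi05} it is a standard bubble; equivalently $g_{v_\infty}$ is the pull-back of $\roundg$ under a conformal transformation of $\SSphere^n$, i.e.\ $v_\infty\in\pi(\{1\}\times B)$, so $\pi^{-1}(v_\infty)=(1,\xi_\infty)$ for some $\xi_\infty\in B$.

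Since $\pi^{-1}$ is continuous on $C^{4,\alpha'}_+(\SSphere^n)$ by Lemma \ref{Lem:24VI20-L1}, we get $(w_{\mu_j},\xi_{\mu_j})=\pi^{-1}(v_{\mu_j})\to(1,\xi_\infty)$ in $\mcS_0\times B$ along this subsequence; as $\xi_\infty\in B$, this forces $|\xi_{\mu_j}|\to|\xi_\infty|<1$ and $\|w_{\mu_j}-1\|_{C^{4,\alpha'}(\SSphere^n)}\to0$, contradicting both alternatives. Hence $\|w_{\mu_j}-1\|_{C^{4,\alpha'}(\SSphere^n)}\to0$; and since the same argument shows that every subsequence of $\{\xi_{\mu_j}\}$ has a further subsequence converging to a point of the open ball $B$, the closed set of limit points of $\{\xi_{\mu_j}\}$ is a compact subset of $B$, so no subsequence approaches $\partial B$ and $\{\xi_{\mu_j}\}$ stays in a compact subset of $B$.

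The computational content is light; the substantive input comes from Theorem \ref{thm9999} and the Liouville theorem, both already established. The two points requiring care are the uniformity in $\mu_j$ of the a priori bound — handled above via the scale-invariance of the hypotheses of Theorem \ref{thm9999} under $K\mapsto K_{\mu_j}$, which is exactly why Theorem \ref{thm9999} is stated in terms of those normalized quantities — and the passage from $C^4$- to $C^{4,\alpha'}$-convergence, which is the reason the lemma assumes $K\in C^{2,\alpha}$ with $\alpha>\alpha'$.
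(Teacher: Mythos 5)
Your proof is correct and follows essentially the same route as the paper: uniform bounds from Theorem \ref{thm9999} (whose hypotheses are scale-invariant under $K\mapsto K_{\mu_j}$, a point the paper leaves implicit but you rightly spell out), derivative estimates plus Evans--Krylov to get relative compactness in $C^{4,\alpha'}$, the Liouville-type theorem to identify the limit, and the diffeomorphism property of $\pi$ from Lemma \ref{Lem:24VI20-L1}. The only cosmetic difference is that you apply the Liouville theorem to the limit of $v_{\mu_j}$ and then invoke continuity of $\pi^{-1}$, whereas the paper applies it directly to a limit point $w_*$ of $w_{\mu_j}$ and uses the normalization $w_*\in\mcS_0$ to conclude $w_*\equiv 1$.
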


\begin{proof} By Theorem \ref{thm9999}, first and second derivative estimates (see \cite{Chen05, GW03-IMRN}, \cite[Theorem 1.10]{Li09-CPAM}, \cite[Theorem 1.20]{LiLi03}, \cite{Wang06}) and Evans-Krylov's theorem, $\{v_{\mu_j}\}$ is bounded in $C^{4,\alpha}(\SSphere^n)$ and hence is relatively compact in $C^{4,\alpha'}(\SSphere^n)$. Since $\pi$ is a diffeomorphism, $\pi^{-1}$ maps compact sets into compact sets. It follows that $\{w_{\mu_j}\}$ is relatively compact in $\mcS_0$ and $\{\xi_{\mu_j}\}$ is bounded. Furthermore, if $w_*$ is a limit point of $\{w_{\mu_j}\}$, then $w_* \in \mcS_0$ and satisfies 
\[
\sigma_k(\lambda(A_{g_{w_*}})) = 2^{-k} \Big(\begin{array}{c}n\\k\end{array}\Big), \quad \lambda(A_{g_{w_*}}) \in \Gamma \quad \text{ on } \SSphere^n.
\]
By the Liouville-type theorem \cite[Theorem 3]{Viac00-Duke} (or \cite[Theorem 1.3]{LiLi05}), $w_* \equiv 1$. The conclusion follows.
\end{proof}

By a straightforward computation, for every fixed $\xi \in B$, the linearized operator of $F_\mu[\pi(\cdot,\xi)]$ at $\bar w \equiv 1$ is 
\[
\mcL := D_w (F_\mu \circ \pi)(w,\xi)]\Big|_{w = \bar w} = - d_{n,k}(\Delta + n) \quad \text{ with } \quad d_{n,k} := \frac{2^{2-k}}{n-2} \Big(\begin{array}{c}n\\k\end{array}\Big)
\]
and with domain $D(\mcL)$ being the tangent plane to $\mcS_0$ at $w = \bar w$:
\[
D(\mcL) := T_1(\mcS_0) = \Big\{\eta \in C^{4,\alpha'}(\SSphere^n): \int_{\SSphere^n} x \eta(x)\,dv_{\roundg}(x) = 0\Big\}.
\]
It is easy to check using the implicit function theorem that $\mcS_0$ is represented locally near $1$ as a graph over $T_1(\mcS_0)$: There is a twice differentiable map $\eta \in T_1(\mcS_0) \mapsto \zeta(\eta) \in \RR^{n+1}$ defined in a neighborhood of $0$ in $T_1(\mcS_0)$ with $\zeta(0) = 0$ and $D\zeta(0) = 0$ such that all $w \in \mcS_0$ sufficiently close to $1$ are of the form
\[
w(x) = 1 + \eta(x) + \zeta(\eta) \cdot x.
\]

It is well-known that $\mcL$ is an isomorphism from $D(\mcL)$ to
\[
R(\mcL) := \Big\{f \in C^{2,\alpha'}(\SSphere^n): \int_{\SSphere^n} x f(x)\,dv_{\roundg}(x) = 0\Big\}.
\]
Let $\Pi$ be a projection from $C^{2,\alpha'}(\SSphere^n)$ onto $R(\mcL)$ defined by
\[
\Pi f(x) = f(x) - \frac{n+1}{|\SSphere^n|} x \cdot \int_{\SSphere^n} y f(y)\,dv_{\roundg}(y).
\]

\begin{proposition}\label{prop233}
Let $n\geq3$, $n/2\leq k\leq n$, and $0 < \alpha' < \alpha < 1$. Suppose that $K \in C^{2,\alpha}(\SSphere^n)$ and let $F_\mu$ be defined by \eqref{Eq:25VI20-FmuDef}. Then for every $s_0 \in (0,1)$, there exists a constant $\mu_0 \in (0,1]$ and a neighborhood $\mcN$ of $1$ in $\mcS_0$  such that, for every $\mu \in (0,\mu_0]$ and $\xi \in \bar B_{s_0} \subset B$, there exists a unique $w_{\xi,\mu}\in \mcN$, depending smoothly on $(\xi,\mu)$, such that
\begin{equation}
\Pi(F_{\mu}[\pi(w_{\xi,\mu},\xi)])=0.\label{eq232}
\end{equation}
Furthermore, there exists some $C>0$ such that, for $\mu \in (0,\mu_0]$ and $|\xi|, |\xi'| \leq s_0$,
\begin{equation}
\|w_{\xi,\mu}-1\|_{C^{4,\alpha'}(\SSphere^{n})}\leq C\mu \Big\|K - 2^{-k}\binom{n}{k}\Big\|_{C^{2,\alpha}(\SSphere^{n})} ,
	\label{eq233}
\end{equation}
\begin{equation}
\|w_{\xi,\mu}-w_{\xi',\mu}\|_{C^{4,\alpha'}(\SSphere^{n})}\leq C\mu |\xi - \xi'| \Big\|K - 2^{-k}\binom{n}{k}\Big\|_{C^{2,\alpha}(\SSphere^{n})}.\label{Eq:01VII20-Lip}
\end{equation}

\end{proposition}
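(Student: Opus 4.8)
The plan is to apply the implicit function theorem to the map
$(w,\xi,\mu) \mapsto \Pi(F_\mu[\pi(w,\xi)])$, using as base point $w = 1$, $\mu = 0$, with $\xi$ as a fixed parameter ranging over $\bar B_{s_0}$. First I would note that $F_0[\pi(w,\xi)] = \sigma_k(\lambda(A_{g_{\pi(w,\xi)}})) - 2^{-k}\binom{n}{k}$ is conformally invariant in the sense that $\varphi_{P,t}^*(g_v) = g_{T_{\varphi_{P,t}}v}$, so that $\sigma_k(\lambda(A_{g_{\pi(w,\xi)}})) = \sigma_k(\lambda(A_{g_w})) \circ \varphi_{P,t}$ (up to the appropriate conformal factor built into $T_\varphi$); in particular $F_0[\pi(1,\xi)] = 0$ for every $\xi$, since $\sigma_k(\lambda(A_{\roundg})) = 2^{-k}\binom{n}{k}$. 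Thus $\Pi(F_0[\pi(1,\xi)]) = 0$ for all $\xi \in \bar B_{s_0}$, which provides the family of base points.

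Next I would compute the partial derivative in the $w$-direction at $w = 1$. Since $\mcS_0$ is locally a graph $w = 1 + \eta + \zeta(\eta)\cdot x$ over $T_1(\mcS_0)$ with $\zeta(0) = 0$, $D\zeta(0) = 0$, the derivative of $w \mapsto F_\mu[\pi(w,\xi)]$ at $w = 1$ in a tangent direction $\eta \in T_1(\mcS_0)$ equals the derivative of $F_\mu[\pi(\cdot,\xi)]$ composed with the inclusion $\eta \mapsto 1 + \eta$, because the correction term $\zeta(\eta)\cdot x$ contributes at second order only. As recorded in the excerpt just before the proposition, this linearization is $\mcL = -d_{n,k}(\Delta + n)$, independent of $\xi$ and of $\mu$ (the $K_\mu$ term is killed by differentiation in $w$). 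The key algebraic fact, also stated above, is that $\mcL : D(\mcL) = T_1(\mcS_0) \to R(\mcL)$ is an isomorphism, and $\Pi$ is precisely the projection onto $R(\mcL)$; hence $D_w\big(\Pi \circ F_\mu \circ \pi\big)(1,\xi) = \Pi \circ \mcL = \mcL : T_1(\mcS_0) \to R(\mcL)$ is invertible.

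With invertibility of the linearization in hand, the implicit function theorem (in the $C^{4,\alpha'}$/$C^{2,\alpha'}$ Banach space setting, using that $\pi$ is a $C^2$ diffeomorphism by Lemma \ref{Lem:24VI20-L1} and that the Schouten-to-$\sigma_k$ map is smooth on the elliptic cone $\Gamma_k$) gives, for each $\xi$, a unique solution $w_{\xi,\mu} \in \mcN$ of $\Pi(F_\mu[\pi(w_{\xi,\mu},\xi)]) = 0$ for small $\mu$, depending smoothly on $(\xi,\mu)$; uniformity over the compact set $\bar B_{s_0}$ gives a single $\mu_0$ and neighborhood $\mcN$. For the quantitative bounds \eqref{eq233} and \eqref{Eq:01VII20-Lip}: differentiating the identity $\Pi(F_\mu[\pi(w_{\xi,\mu},\xi)]) = 0$ in $\mu$ at $\mu = 0$ and using that $\partial_\mu F_\mu = -(K - 2^{-k}\binom{n}{k})$ together with the bounded inverse $\mcL^{-1}$ yields $\|\partial_\mu w_{\xi,\mu}|_{\mu=0}\|_{C^{4,\alpha'}} \leq C\|K - 2^{-k}\binom{n}{k}\|_{C^{2,\alpha}}$, and integrating in $\mu$ gives \eqref{eq233}; similarly differentiating in $\xi$ and using that $\partial_\xi \pi$ is bounded and that $F_0[\pi(w,\xi)]$ has $\xi$-derivative vanishing at $w=1$ (by the conformal invariance noted above) produces the Lipschitz estimate \eqref{Eq:01VII20-Lip}, the extra factor $\mu$ coming from the fact that at $\mu = 0$ the solution is $w \equiv 1$ for every $\xi$. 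The main obstacle I anticipate is bookkeeping rather than conceptual: carefully justifying that the $\zeta(\eta)\cdot x$ reparametrization of $\mcS_0$ does not disturb the linearization computation, and tracking the $\mu$-dependence precisely enough to extract the factor $\mu$ (not just $O(1)$) in both \eqref{eq233} and \eqref{Eq:01VII20-Lip}; this is handled by noting $w_{\xi,0} \equiv 1$ and Taylor-expanding in $\mu$ with uniform control of the second-order remainder on $\bar B_{s_0}$.
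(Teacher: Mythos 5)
Your strategy coincides with the paper's for the core of the proposition: the paper dispatches existence, uniqueness, smooth dependence and \eqref{eq233} as an ``immediate application of the implicit function theorem,'' which is exactly your quantitative IFT at $(w,\mu)=(1,0)$ with $\xi$ as parameter, using $F_0[\pi(1,\xi)]=0$, the invertibility of $\mcL$ on $T_1(\mcS_0)$, and $\partial_\mu F_\mu=-(K-2^{-k}\binom{n}{k})$. Where you genuinely diverge is \eqref{Eq:01VII20-Lip} --- the only part the paper writes out. You differentiate $\Pi(F_\mu[\pi(w_{\xi,\mu},\xi)])=0$ in $\xi$ and use that $D_\xi(F_\mu\circ\pi)$ vanishes at $w\equiv 1$; the paper never differentiates in $\xi$: by exact conformal invariance it writes $F_\mu[\pi(w_{\xi,\mu},\xi')]-F_\mu[\pi(w_{\xi,\mu},\xi)]$ as $(F_0[\pi(w_{\xi,\mu},0)]-F_0[\pi(1,0)])\circ\varphi_{P',t'}^{-1}$ minus the same function composed with $\varphi_{P,t}^{-1}$, bounds this via \eqref{eq233}, and converts the bound into \eqref{Eq:01VII20-Lip} through the uniform invertibility of the linearization. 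The trade-off is this: your infinitesimal version is shorter but rests entirely on the joint $C^2$-smoothness of $(w,\xi)\mapsto F_\mu[\pi(w,\xi)]$ into $C^{2,\alpha'}$, i.e.\ on differentiating compositions with the M\"obius maps in a fixed H\"older topology; this costs a derivative (translations are not strongly continuous on H\"older spaces), so it is legitimate only because of the gap $\alpha'<\alpha$ and Lemma \ref{Lem:24VI20-L1}, which you should invoke explicitly at that step rather than merely asserting ``$\partial_\xi\pi$ is bounded.'' The paper's finite-difference identity buys exactly this: the $\xi$-variation enters only through composing a single function, already $O(\mu)$-small by \eqref{eq233}, with two nearby conformal maps, so no $\xi$-differentiation of a merely H\"older function is needed and the loss of regularity is absorbed by measuring the result in $C^{2,\alpha'}$ with data controlled in $C^{2,\alpha}$.
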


\begin{proof} We will only prove \eqref{Eq:01VII20-Lip}. The rest of the result follows from an immediate application of the implicit function theorem.

Fix $\xi,\xi' \in B_{s_0}$. Let $t = (1 - |\xi|)^{-1}$ and $t' = (1-|\xi'|)^{-1}$.  If both $\xi$ and $\xi'$ are non-zero, let $P = \frac{\xi}{|\xi|}$ and $P' = \frac{\xi'}{|\xi'|}$. If $\xi = 0$ but $\xi' \neq 0$, let $P = P ' = \frac{\xi'}{|\xi'|}$. If $\xi' = 0$ but $\xi \neq 0$, let $P = P ' = \frac{\xi}{|\xi|}$. If $\xi  = \xi' = 0$, take $P = P'$ to be any point on $\mathbb{S}^n$. Then
\[
F_\mu[\pi(w_{\xi,\mu},\xi')] = F_\mu[T_{\varphi_{P',t'}^{-1}}(w_{\xi,\mu})] = F_\mu [\pi(w_{\xi,\mu},0)] \circ \varphi_{P',t'}^{-1} 
\]
and so
\begin{align*}
F_\mu[\pi(w_{\xi,\mu},\xi')] - F_\mu[\pi(w_{\xi,\mu},\xi)]
	&= (F_0[\pi(w_{\xi,\mu},0)] - F_0[\pi(1,0)] ) \circ \varphi_{P',t'}^{-1}\\
		&\qquad - (F_0[\pi(w_{\xi,\mu},0)] - F_0[\pi(1,0)] ) \circ \varphi_{P,t}^{-1}.
\end{align*}
Thus, by \eqref{eq233}, 
\begin{align*}
\|F_\mu[\pi(w_{\xi,\mu},\xi')] - F_\mu[\pi(w_{\xi,\mu},\xi)]\|_{C^{2,\alpha'}(\SSphere^n)} \leq C\mu|\xi - \xi'|\Big\|K - 2^{-k}\binom{n}{k}\Big\|_{C^{2,\alpha}(\SSphere^{n})}.
\end{align*}
Estimate \eqref{Eq:01VII20-Lip} thus follows.
\end{proof}

Note that equation (\ref{eq232}) can be equivalently rewritten as
\begin{equation}
\sigma_{k}(\lambda(A_{g_{w_{\xi,\mu}}}))=K_{\mu}\circ\varphi_{P,t}(x)-\Lambda_{\xi,\mu}\cdot x\quad\mbox{on }\SSphere^{n},\label{0202-4}
\end{equation}
where $P = \frac{\xi}{|\xi|}$, $t = (1 - |\xi|)^{-1}$ and $\Lambda_{\xi,\mu} \in \RR^{n+1}$ is given by
\begin{equation}
\Lambda_{\xi,\mu}=-\frac{n+1}{|\SSphere^{n}|}\int_{\SSphere^{n}}F_{\mu}[\pi(w_{\xi,\mu},\xi)](x) x\,dv_{\roundg}(x).
	\label{Eq:25VI20-LambdaDef}
\end{equation}
It is clear that $\Lambda_{\xi,\mu}$ is smooth with respect to $(\xi,\mu)$. Furthermore, if $K$ satisfies the non-degeneracy condition \eqref{nondegeneracy}, then, by Lemma \ref{Lem:25VI20-wConv} and Proposition \ref{prop233}, for $\mu$ sufficiently close to $0$, $v_\mu$ solves \eqref{Eq:27VII20-Xmu} if and only if there exists some $\xi_\mu$ such that $v_\mu = \pi(w_{\xi_\mu,\mu},\xi_\mu)$ and $\Lambda_{\xi_\mu,\mu} = 0$.

We note that $\Lambda_{\xi,\mu}$ can also be computed more directly from the function $K$ as follows. In view of \eqref{0202-4} and the Kazdan-Warner identity (see \cite{Han06} and \cite{Viac00-AMS}), we have
\begin{equation*}
\int_{\SSphere^{n}}\langle \nabla(K_{\mu}\circ\varphi_{P,t} - \Lambda_{\xi,\mu}\cdot x),\nabla x_{i}\rangle w_{\xi,\mu}^{\frac{2n}{n-2}} dv_{\roundg}(x)=0,\quad 1\leq i\leq n+1.
\end{equation*}
It follows that, for $1\leq i\leq n+1$,
\begin{equation}
\frac{1}{\mu}\sum\limits_{j=1}^{n+1}\Lambda_{\xi,\mu}^{j}\int_{\SSphere^{n}}\langle \nabla x_{j},\nabla x_{i}\rangle w_{\xi,\mu}^{\frac{2n}{n-2}}dv_{\roundg}(x) = \int_{\SSphere^{n}}\langle \nabla(K\circ\varphi_{P,t}),\nabla x_i \rangle w_{\xi,\mu}^{\frac{2n}{n-2}}dv_{\roundg}(x).\label{0201-2}
\end{equation}

Note that, as $\mu \rightarrow 0$, by \eqref{eq233}, we have $w_{\xi,\mu} \rightarrow 1$ uniformly for $|\xi| \leq s_0$. This implies that the coefficient matrix on the left hand side of  \eqref{0201-2} is positive definite:
\begin{equation}
\left(\int_{\SSphere^{n}}\langle \nabla x_{j},\nabla x_{i}\rangle w_{\xi,\mu}^{\frac{2n}{n-2}}dv_{\roundg}(x) \right)_{1 \leq i,j \leq n+1} > 0.
	\label{Eq:24VII20-T1}
\end{equation}
This also implies that the right hand side of \eqref{0201-2} converges uniformly for $|\xi| \leq s_0$  to
\begin{equation}
G_i(\xi) := \int_{\SSphere^{n}} K\circ\varphi_{P,t}\, x_i \,dv_{\roundg}(x) \text{ where } P= \frac{\xi}{|\xi|} \text{ and } t = (1 - |\xi|)^{-1}.
	\label{Eq:25VI20-GDef}
\end{equation}

\begin{lemma}\label{Lem:13VII20}
Let $n\geq3$, $n/2\leq k\leq n$, and $\alpha \in (0,1)$. Suppose that $K \in C^{2,\alpha}_+(\SSphere^n)$ satisfies the non-degeneracy condition \eqref{nondegeneracy}. Let $\Lambda_{\xi,\mu}$ and $G$ be defined as in \eqref{Eq:25VI20-LambdaDef} and \eqref{Eq:25VI20-GDef}. Then there exist $\mu_0 \in (0,1]$ and $s_0 \in (0,1]$ such that, for all $\mu \in (0,\mu_0]$ and $s \in [s_0,1)$, the Brouwer degrees $\deg(\Lambda_{\xi,\mu},B_{s},0)$ and $\deg(G,B_{s},0)$are well-defined and
\[
\deg(\Lambda_{\xi,\mu},B_{s},0) = \deg(G,B_{s},0) = -(-1)^{n} + \deg(\nabla K, \textrm{Crit}_-(K)).
\]
\end{lemma}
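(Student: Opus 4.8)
The plan is to compute the two Brouwer degrees separately and match them, leveraging the uniform convergence already established. First I would handle $\deg(G, B_s, 0)$. The function $G_i(\xi) = \int_{\SSphere^n} K \circ \varphi_{P,t}\, x_i\, dv_{\roundg}$ is, up to normalization, the "center of mass" of the pulled-back curvature $K \circ \varphi_{P,t}$, a classical object in the study of the Nirenberg problem. As $|\xi| \to 1$, the conformal factor $\varphi_{P,t}$ concentrates near $-P = -\xi/|\xi|$, so $G(\xi)$ points (asymptotically) in the direction of $\nabla K$ at the concentration point, with a sign that produces the vector field pointing inward or outward depending on whether the concentration point is a local max/min of $K$. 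This is exactly the setup in Li \cite{Li95-JDE} and Chang--Yang \cite{ChangYang91-Duke}: under the non-degeneracy condition \eqref{nondegeneracy}, $G$ is a non-vanishing vector field on $\partial B_s$ for $s$ close to $1$, and a Kronecker/Poincaré--Hopf computation identifies $\deg(G, B_s, 0)$ with $-(-1)^n + \deg(\nabla K, \textrm{Crit}_-(K))$. I would cite this computation from \cite{Li95-JDE} (it is purely about the function $K$ on $\SSphere^n$ and does not involve the $\sigma_k$ operator), checking only that our sign conventions for $\varphi_{P,t}$ agree with those there; the appearance of $\textrm{Crit}_-(K)$ rather than $\textrm{Crit}_+(K)$ is dictated by which critical points produce an outward-pointing field.

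Next I would relate $\deg(\Lambda_{\xi,\mu}, B_s, 0)$ to $\deg(G, B_s, 0)$. From \eqref{0201-2} we have the linear relation
\[
\frac1\mu\, M_{\xi,\mu}\, \Lambda_{\xi,\mu} = H_{\xi,\mu}, \qquad M_{\xi,\mu} = \left(\int_{\SSphere^n}\langle\nabla x_j,\nabla x_i\rangle w_{\xi,\mu}^{\frac{2n}{n-2}}\,dv_{\roundg}\right)_{ij},
\]
where $H_{\xi,\mu}$ denotes the right-hand side of \eqref{0201-2}, which converges to $G(\xi)$ uniformly on $\bar B_{s_0}$ as $\mu \to 0$ by \eqref{eq233}, and $M_{\xi,\mu}$ is positive definite by \eqref{Eq:24VII20-T1} (converging to $\frac{4n}{(n-2)^2}|\SSphere^n|^{-1}\cdot(\text{const})\,I$ or similar). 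Since $G$ is non-vanishing on $\partial B_s$ with $|G| \geq c > 0$ there, for $\mu$ small enough $H_{\xi,\mu}$ is also non-vanishing on $\partial B_s$ and the straight-line homotopy $\tau \mapsto (1-\tau)M_{\xi,\mu}^{-1}H_{\xi,\mu} + \tau G(\xi)$ avoids $0$ (using that $M_{\xi,\mu}^{-1}$ is positive definite so $M_{\xi,\mu}^{-1}H_{\xi,\mu}$ is a small perturbation of a positive-definite-matrix image of $G$; one may first homotope $M_{\xi,\mu}^{-1}$ to the identity through positive definite matrices, then homotope $H_{\xi,\mu}$ to $G$). Since $\Lambda_{\xi,\mu} = \mu\, M_{\xi,\mu}^{-1} H_{\xi,\mu}$ differs from $M_{\xi,\mu}^{-1}H_{\xi,\mu}$ only by the positive scalar $\mu$, it has the same degree. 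Hence $\deg(\Lambda_{\xi,\mu}, B_s, 0) = \deg(G, B_s, 0)$ for all small $\mu$ and all $s \in [s_0, 1)$, after possibly shrinking $s_0$ toward $1$ and $\mu_0$ toward $0$.

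Two smaller points round out the argument. One must verify that all three degrees are independent of $s \in [s_0,1)$: this follows because $G$ (hence, for small $\mu$, also $\Lambda_{\xi,\mu}$) is non-vanishing on the whole annulus $\{s_0 \leq |\xi| < 1\}$ once $s_0$ is close enough to $1$ — this is again part of the analysis in \cite{Li95-JDE}, coming from the fact that for $|\xi|$ near $1$ the vector $G(\xi)$ has a definite radial component determined by $\nabla_{\roundg}K$ and $\Delta_{\roundg}K$ at the near-boundary concentration point and \eqref{nondegeneracy} prevents it from vanishing. One must also confirm that $w_{\xi,\mu}$ and hence $\Lambda_{\xi,\mu}$ are defined for $|\xi|$ up to (but not including) $1$: Proposition \ref{prop233} gives this only on $\bar B_{s_0}$, but the formula \eqref{0201-2} together with the conformal covariance $F_\mu[\pi(w,\xi)] = F_\mu[\pi(w,0)]\circ\varphi_{P,t}^{-1}$ lets one extend or at least analyze $\Lambda_{\xi,\mu}$ near $\partial B$ by the concentration-compactness behavior as $t \to \infty$. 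The main obstacle is this last part — controlling $w_{\xi,\mu}$ and the integrals uniformly as $|\xi| \to 1^-$ (equivalently $t \to \infty$), where the bubble degenerates; the resolution is that in this regime $\Lambda_{\xi,\mu}$ is governed entirely by the leading-order behavior of $G$, which is explicit, so one does not actually need fine control of $w_{\xi,\mu}$ there, only of $G$, and \eqref{nondegeneracy} handles $G$.
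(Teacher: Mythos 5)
Your proposal follows essentially the same route as the paper: quote the degree count for $G$ from \cite[Corollary 6.2]{Li95-JDE}, show that $G$ has no zeros near $\partial B$ using the concentration behaviour of $K\circ\varphi_{P,t}$ together with \eqref{nondegeneracy}, and transfer the degree to $\Lambda_{\xi,\mu}$ through \eqref{0201-2}, the positive definiteness \eqref{Eq:24VII20-T1} and the uniform convergence of the right-hand side of \eqref{0201-2} to $G$. The only notable difference is that where you defer the non-vanishing of $G$ near the boundary to the literature, the paper proves it directly: assuming $G(\xi_i)=0$ with $|\xi_i|\rightarrow 1$, it rewrites these conditions via stereographic projection as $\int_{\RR^n}\nabla\tilde K\,U_i^{\frac{2n}{n-2}}\,dy=0$ and $\int_{\RR^n}y\cdot\nabla\tilde K\,U_i^{\frac{2n}{n-2}}\,dy=0$ for standard bubbles $U_i$, and re-runs the Taylor/moment computation from the proof of Theorem \ref{thm9999} to conclude $\nabla_{\roundg}K(P)=\Delta_{\roundg}K(P)=0$, contradicting \eqref{nondegeneracy} --- exactly the mechanism you sketch heuristically.
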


\begin{proof}
In view of \eqref{0201-2}, \eqref{Eq:24VII20-T1}, the uniform convergence of the right hand side of \eqref{0201-2} to $G$ and the degree counting formula in \cite[Corollary 6.2]{Li95-JDE} (which gives $\deg(G,B_{s},0) = -(-1)^{n} + \deg(\nabla K, \textrm{Crit}_-(K))$), it suffices to show that $G$ does not have any zero near $\partial B$. 

Suppose by contradiction that there exist $\xi_i$ with $|\xi_i| \rightarrow 1$ such that $G(\xi_i) = 0$. Let $P_i = \frac{\xi_i}{|\xi_i|}$, $t_i = (1 - |\xi_i|)^{-1}$ and assume, without loss of generality, that $P_i \rightarrow P$. 

Let $\Phi: \RR^n \rightarrow \SSphere^n $  be the stereographic projection with $P$ being the south pole to the equatorial plane of $\SSphere^{n}$, and define $\tilde K = K \circ \Phi$. The equations
\[
0 = G(\xi_i) = \int_{\SSphere^{n}} K\circ\varphi_{P_i,t_i}\, x \,dv_{\roundg}(x) = - \int_{\SSphere^{n}}\langle \nabla(K\circ\varphi_{P_i,t_i}),\nabla x_j \rangle  dv_{\roundg}(x)
\]
then transform into
\[
0 = \int_{\RR^n} \nabla \tilde K U_i^{\frac{2n}{n-2}} dy \quad \text{ and } \quad  0 = \int_{\RR^n} y \cdot \nabla \tilde K U_i^{\frac{2n}{n-2}}dy
\]
where
\[
U_i = \Big(\frac{\lambda_i}{1 + \lambda_i^2 |y - y_i|^2}\Big)^{\frac{n-2}{2}}
\]
for some $y_i \in \RR^n$, $\lambda_i > 0$ such that $y_i \rightarrow 0$ and $\lambda_i \rightarrow \infty$. As shown in the proof of Theorem \ref{thm9999} (see the argument following \eqref{rushan} and \eqref{rushan3}), this implies that $\nabla_{\roundg} K(P) = 0$ and $\Delta_{\roundg} K(P) = 0$, contradicting \eqref{nondegeneracy}.
\end{proof}

The last piece for the proof of Theorem \ref{main'} is the following result.

\begin{theorem}\label{Thm:dff}
Let $n\geq3$, $n/2\leq k\leq n$, and $0 < \alpha < 1$. Suppose that $K \in C^{2,\alpha}_+(\SSphere^n)$ satisfies the non-degeneracy condition \eqref{nondegeneracy}. Let $F_1$ and $\mcO$ be defined as in \eqref{Eq:25VI20-FmuDef} and \eqref{Eq:25VI20-ODef} with $\alpha' = \alpha$. Then
\[
\deg(F_1, \mcO, 0)=  -1 + (-1)^{n}\deg(\nabla K, \textrm{Crit}_-(K)).
\]

\end{theorem}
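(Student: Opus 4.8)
The plan is to compute $\deg(F_1,\mcO,0)$ in three moves: reduce to a small deformation parameter $\mu$ via homotopy invariance, localize the computation to a neighborhood of the M\"obius orbit of the round metric via excision, and then carry out a Lyapunov--Schmidt reduction that identifies the degree with the finite-dimensional Brouwer degree $\deg(\Lambda_{\xi,\mu},B_{s_0},0)$ evaluated in Lemma \ref{Lem:13VII20}.

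First, by \cite{Li89-CPDE} the degree $\deg(F_\mu,\mcO,0)$ is well-defined for each $\mu\in(0,1]$ and is independent of $\mu$: for $v\in\partial\mcO$ one has $F_\mu[v]\neq0$, since either $\|\ln v\|_{C^{4,\alpha'}}$ attains the threshold $\pi^{\alpha-\alpha'}C_*$, which is excluded for solutions by Theorem \ref{thm9999} and interior elliptic estimates once $C_*$ is taken large, or else $\lambda(A_{g_v})\in\partial\Gamma_k$, in which case $\sigma_k(\lambda(A_{g_v}))=0<K_\mu$; and this non-vanishing is uniform along the homotopy $\mu\mapsto F_\mu$. Hence $\deg(F_1,\mcO,0)=\deg(F_\mu,\mcO,0)$, and we fix $\mu\in(0,\mu_0]$ small, with $\mu_0$, $s_0$ and $\mcN$ as furnished by Proposition \ref{prop233} and Lemma \ref{Lem:13VII20}. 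By Lemma \ref{Lem:25VI20-wConv} (shrinking $\mu$ if necessary), together with the non-vanishing of $\Lambda_{\xi,\mu}$ near $\partial B$ from Lemma \ref{Lem:13VII20}, every zero of $F_\mu$ in $\mcO$ has the form $\pi(w,\xi)$ with $\xi\in\bar B_{s_0}$ and $w\in\mcN$, hence lies in the open set $\mcO':=\mcO\cap\pi(\mcN\times B_{s_0})$; by excision $\deg(F_\mu,\mcO,0)=\deg(F_\mu,\mcO',0)$.

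Next comes the reduction. Using the splitting $C^{2,\alpha'}(\SSphere^n)=R(\mcL)\oplus E$ with $E=\mathrm{span}\{x_1,\dots,x_{n+1}\}$ and the graph representation $w=1+\eta+\zeta(\eta)\cdot x$ of $\mcS_0$ near $1$, we view $(\eta,\xi)\mapsto F_\mu[\pi(w,\xi)]$ as a map defined near $\{0\}\times\bar B_{s_0}$ in $T_1(\mcS_0)\times B$, and decompose it into its $R(\mcL)$-component $\Pi F_\mu[\pi(w,\xi)]$ and its $E$-component, which is a multiple of $\Lambda_{\xi,\mu}\cdot x$, $\Lambda_{\xi,\mu}$ being as in \eqref{Eq:25VI20-LambdaDef}. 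By Proposition \ref{prop233}, for each $\xi$ the equation $\Pi F_\mu[\pi(w,\xi)]=0$ has a unique solution $w_{\xi,\mu}\in\mcN$, and this solution is non-degenerate with invertible linearization $\mcL|_{T_1(\mcS_0)}=-d_{n,k}(\Delta+n)|_{T_1(\mcS_0)}$, whose only negative eigenvalue is the simple one on the constants (the eigenvalues on the degree-$\ell$ spherical harmonics with $\ell\geq2$ being strictly positive). Following the degree factorization of \cite[Appendix~B]{Li95-JDE} (compare \cite{CHY11-CVPDE}), $\deg(F_\mu,\mcO',0)$ is the product of the local index $(-1)^1$ of this reduced part with the Brouwer degree of the bifurcation map obtained by projecting $F_\mu[\pi(w_{\xi,\mu},\xi)]$ onto $E$; the identification of the parameter $\xi$ with $E$ is orientation-preserving, since $\pi(1,\xi)=1+\tfrac{n-2}{2}\,\xi\cdot x+O(|\xi|^2)$, while the normalization factor $-\tfrac{n+1}{|\SSphere^n|}$ in \eqref{Eq:25VI20-LambdaDef} relating this $E$-projection to $\Lambda_{\xi,\mu}$ contributes the sign $(-1)^{n+1}$. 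Altogether $\deg(F_\mu,\mcO',0)=(-1)^{1}(-1)^{n+1}\deg(\Lambda_{\xi,\mu},B_{s_0},0)=(-1)^{n}\deg(\Lambda_{\xi,\mu},B_{s_0},0)$.

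Combining this with Lemma \ref{Lem:13VII20}, which gives $\deg(\Lambda_{\xi,\mu},B_{s_0},0)=-(-1)^{n}+\deg(\nabla K,\textrm{Crit}_-(K))$, we conclude
\[
\deg(F_1,\mcO,0)=(-1)^{n}\big(-(-1)^{n}+\deg(\nabla K,\textrm{Crit}_-(K))\big)=-1+(-1)^{n}\deg(\nabla K,\textrm{Crit}_-(K)),
\]
as asserted. The hard part will be the factorization step: one must carry out the Lyapunov--Schmidt reduction within the fully nonlinear degree framework of \cite{Li89-CPDE} (in which the domain $C^{4,\alpha'}$ and the target $C^{2,\alpha'}$ are different spaces) and correctly account for the two orientation signs above, whose product is precisely $(-1)^{n}$; the remaining steps are routine, given the homotopy, excision and implicit-function-theorem machinery already set up and the degree count of Lemma \ref{Lem:13VII20}.
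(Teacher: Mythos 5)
Your outer scaffolding coincides with the paper's: homotopy in $\mu$ to reduce to small $\mu$, excision to $\pi(\mcN\times B_{s_0})$ using Lemma \ref{Lem:25VI20-wConv} and Proposition \ref{prop233}, and the final combination with Lemma \ref{Lem:13VII20}; your sign bookkeeping $(-1)^{1}\cdot(-1)^{n+1}=(-1)^{n}$ (one factor from the simple negative eigenvalue of $\mcL=-d_{n,k}(\Delta+n)$ on $T_1(\mcS_0)$, the other from the minus sign in \eqref{Eq:25VI20-LambdaDef}, i.e.\ from the fact that the $E$-component of $F_\mu$ along the reduced branch is $-\Lambda_{\xi,\mu}\cdot x$ and $\dim E=n+1$) is also the correct one. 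But the crux, namely the identity $\deg(F_\mu,\pi(\mcN\times B_{s_0}),0)=(-1)^{n}\deg(\Lambda_{\xi,\mu},B_{s_0},0)$, is precisely what you do not prove: you appeal to a ``degree factorization'' for the Lyapunov--Schmidt reduction and then concede that it still ``must be carried out'' in the fully nonlinear degree framework of \cite{Li89-CPDE}. No such product formula is available off the shelf there, nor in \cite[Appendix B]{Li95-JDE} (which enters here only through the independence of the degree on $\alpha'$), so this is a genuine gap rather than a citation. Moreover you never address the possibility that $\Lambda_{\xi,\mu}$ (hence $F_\mu$) has degenerate zeros, in which case even a local product/index argument by itself says nothing.

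The paper closes exactly this gap with two concrete ingredients missing from your proposal. First, when all zeros are non-degenerate, additivity of the degree reduces the claim to a count of negative eigenvalues of the linearization at each solution $\pi(w_{\xi_0,\mu},\xi_0)$: writing $w=1+\eta+\zeta(\eta)\cdot x$, using \eqref{eq233} and \eqref{Eq:01VII20-Lip} to show that $D_\eta\tilde F_\mu(\eta_{\xi,\mu},\xi)$ retains a single simple negative eigenvalue and that the coupling term $D_\eta\tilde F_\mu(\eta_{\xi,\mu},\xi)(D_\xi\eta_{\xi,\mu})$ is $O(\mu^2)$, one obtains that $DF_\mu[\pi(w_{\xi_0,\mu},\xi_0)]$ is non-degenerate with local index $-\deg(-\Lambda_{\xi,\mu},\xi_0)$, which is \eqref{Eq:30VI20-Deg6}; this quantitative eigenvalue count is the rigorous substitute both for your product heuristic and for your orientation remark based on $\pi(1,\xi)=1+\tfrac{n-2}{2}\,\xi\cdot x+O(|\xi|^2)$. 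Second, the degenerate case is handled by the perturbation $K^{(a)}=K+a\cdot x$: since $\Pi(a\cdot x)=0$, the reduced solutions are unchanged, $w^{(a)}_{\xi,\mu}=w_{\xi,\mu}$, while $\Lambda^{(a)}_{\xi,\mu}=\Lambda_{\xi,\mu}+\mu a$; Sard's theorem then yields arbitrarily small $a$ for which all zeros are non-degenerate, and the constancy of $\deg(F^{(a)}_\mu,\mcO,0)$ in small $a$ transfers the computation back to $a=0$. Without these two steps (or a genuine proof of your factorization claim covering degenerate zeros), the theorem is not yet established. A minor further point: the reduction is naturally performed with some $\alpha'<\alpha$ (compactness in Lemma \ref{Lem:25VI20-wConv} is only in $C^{4,\alpha'}$), and the statement with $\alpha'=\alpha$ is reached via the independence of the degree on $\alpha'$ (\cite[Theorem B.1]{Li95-JDE}), a transfer your write-up elides.
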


\begin{proof} Fix some $\alpha' \in (0,\alpha)$. By Lemma \ref{Lem:13VII20}, it suffices to show that there exist $\mu_0 \in (0,1]$ and $s_0 \in (0,1)$ such that
\[
\deg(F_\mu, \mcO, 0) =  (-1)^{n}\deg(\Lambda_{\xi,\mu},B_{s},0) \text{ for all } \mu \in (0,\mu_0], s \in (s_0,1),
\]
where $\mcO$ is now as in \eqref{Eq:25VI20-FmuDef} with $\alpha' < \alpha$.

Note that, by Lemma \ref{Lem:25VI20-wConv} and Proposition \ref{prop233}, there exist $s_0 \in (0,1)$, an open neighborhood $\mcN$ of $1$ in $\mcS_0$ and a small $\mu_0 > 0$ such that all solutions of $F_\mu[v] = 0$ with $0 < \mu \leq \mu_0$ belong to $\pi(\mcN \times B_{s_0})$ and have the form $v = \pi(w_{\xi,\mu},\xi)$ for some $\xi \in B_{s_0}$ satisfying $\Lambda_{\xi,\mu} = 0$. By the excision property, we have
\[
\deg (F_\mu,\mcO,0) = \deg (F_\mu,\pi(\mcN \times B_{s_0}),0) \text{ for all } 0 < \mu \leq \mu_0.
\]

Consider first the case that $F_\mu$ has only non-degenerate zeroes in $\pi(\mcN \times B_{s_0})$ which correspond to non-degenerate zeroes of $\Lambda_{\xi,\mu}$. We then have
\[
\deg (F_\mu,\pi(\mcN \times B_{s_0}),0) = \sum_{\xi \in B_{s_0}: \Lambda_{\xi,\mu} = 0} \deg(F_\mu, \pi(w_{\xi,\mu},\xi)),
\]
where $\deg(F_\mu, \pi(w_{\xi,\mu},\xi))$ is the local degree of $F_\mu$ at $\pi(w_{\xi,\mu},\xi)$. 
Therefore, to conclude the proof in the present case, it suffices to show that, if $\xi_0$ is a zero of $\Lambda_{\xi,\mu}$, then $DF_\mu[\pi(w_{\xi_0,\mu},\xi_0)]$ is non-degenerate and 
\begin{equation}
\deg(F_\mu, \pi(w_{\xi_0,\mu},\xi_0)) = -\deg(-\Lambda_{\xi,\mu}, \xi_0).
	\label{Eq:30VI20-Deg6}
\end{equation}

As explained earlier, for $w$ in a neighborhood of $1$ in $\mcS_0$, we can uniquely write $w = 1 + \eta + \zeta(\eta) \cdot x$ where $ \eta \in T_1(\mcS_0)$ and $\zeta(\eta) \in \RR^{n+1}$. Let $w_{\xi,\mu} = 1 + \eta_{\xi,\mu} + \zeta_{\xi,\mu} \cdot x$. 

Define
\[
\tilde F_\mu(\eta,\xi) = F_\mu[\pi(1 + \eta + \zeta(\eta) \cdot x,\xi)].
\]

Recall that $D_\eta \tilde F_\mu(1,\xi) = \mcL = -d_{n,k} (\Delta + n)$ is a single simple negative eigenvalue on $D(\mcL) = T_1(\mcS_0)$. Thus, by Lemma \ref{Lem:25VI20-wConv} (and Proposition \ref{prop233}), when $\mu_0$ is sufficiently small, $D_\eta \tilde F_\mu(\eta_{\xi,\mu},\xi)$ also has a single simple negative eigenvalue.

Next, we have that $0 = \Pi \tilde F_\mu(\eta_{\xi,\mu},\xi) = \tilde F_\mu(\eta_{\xi,\mu},\xi) + \Lambda_{\xi,\mu} \cdot x$. Hence
\[
0 =   D_\xi \tilde F_\mu(\eta_{\xi,\mu},\xi) + D_\xi (\Lambda_{\xi,\mu} \cdot x) + D_\eta \tilde F_\mu(\eta_{\xi,\mu},\xi) (D_\xi \eta_{\xi,\mu}).
\]
By \eqref{Eq:01VII20-Lip}, we have $\|D_\xi \eta_{\xi,\mu}\|_{C^{4,\alpha'}(\SSphere^n)} \leq C\mu$. Hence as $\mcL(D_\xi \eta_{\xi,\mu}) = 0$, we have by \eqref{eq233} that
\[
\|D_\eta \tilde F_\mu(\eta_{\xi,\mu},\xi) (D_\xi \eta_{\xi,\mu})\|_{C^{2,\alpha'}(\SSphere^n)} \leq C\mu^2.
\]
Recalling \eqref{0201-2}, \eqref{Eq:25VI20-GDef} and the fact that $G$ has non-degenerate zeros, we deduce that $DF_\mu[\pi(w_{\xi_0,\mu},\xi_0)]$ is non-degenerate and its number of negative eigenvectors is equal to that of $-D_\xi \Lambda_{\xi_0,\mu}$. This proves \eqref{Eq:30VI20-Deg6} and hence concludes the proof in the case $F_\mu$ has only non-degenerate zeroes in $\pi(\mcN \times B_{s_0})$.

Consider now the general case. For $a \in \RR^{n+1}$, let $K^{(a)}(x) = K(x) + a\cdot x$, $K_\mu^{(a)}(x) = K_\mu(x) + a\cdot x$ and define $F_\mu^{(a)}[v] = \sigma_k(\lambda(A_{g_v})) - K_\mu^{(a)} = F_\mu[v] - \mu a\cdot x$. Using again Theorem \ref{thm9999} we have that $\deg(F_\mu^{(a)}, \mcO, 0)$ is well-defined and is independent of $a$ when $|a|$ is sufficiently small. Now observe that $\Pi(F_{\mu}^{(a)}[v])=\Pi(F_{\mu}[v])$, we have that the $w^{(a)}_{\xi,\mu}$ corresponding to $K^{(a)}$ obtained in Proposition \ref{prop233} is in fact $w^{(a)}_{\xi,\mu} = w_{\xi,\mu}$. It thus follows that the function $\Lambda^{(a)}_{\xi,\mu}$ corresponding to $K^{(a)}$ is given by
\[
\Lambda_{\xi,\mu}^{(a)} = -\frac{n+1}{|\SSphere^{n}|}\int_{\SSphere^{n}}F_{\mu}^{(a)}[\pi(w_{\xi,\mu},\xi)](x) x\,dv_{\roundg}(x) = \Lambda_{\xi,\mu} + \mu a.
\]
By Sard's theorem, we thus have for almost all $a$ that $\Lambda_{\xi,\mu}^{(a)}$ and so $F_\mu^{(a)}$ have only non-degenerate zeros. By the previous case, we have $\deg(F_{\mu}^{(a)}, \mcO, 0)= (-1)^{n} \deg(\Lambda_{\xi,\mu} + \mu a,B_s,0)$ for those $a$. The conclusion thus follows from the continuity property of the degree.
\end{proof}

\begin{proof}[Proof of Theorem \ref{main'}]
Estimate (\ref{uniform estimate}) is proved by Theorem \ref{thm9999}. Let us assume that $\deg(\nabla K,\textrm{Crit}_-(K)) \neq (-1)^n$ and prove the existence of a solution to \eqref{yq}. Let $K_j$ be a sequence of functions in $C^{2,\alpha}_+(\SSphere^n)$ which converges to $K$ in $C^2$. Then, for all sufficiently large $j$, $K_j$ satisfies the non-degeneracy condition \eqref{nondegeneracy} and $\deg(\nabla K_i,\textrm{Crit}_-(K_i)) \neq (-1)^n$. By Theorem \ref{Thm:dff}, there exists $v_j \in C^{4,\alpha}_+(\SSphere^n)$ such that 
\[
\sigma_k(\lambda(A_{g_{v_j}})) = K_j , \qquad \lambda(A_{g_{v_j}}) \in \Gamma_k \text{ on } \SSphere^n.
\]
By Theorem \ref{thm9999}, we have
\[
\|\ln v_j\|_{C(\SSphere^n)}  \leq C,
\]
and so, by first and second derivative estimates for the $\sigma_k$-Yamabe equation and Evans-Krylov's theorem,
\[
\|\ln v_j\|_{C^{2,\alpha}(\SSphere^n)}  \leq C.
\]
Sending $j\rightarrow \infty$ we arrive at the conclusion.
\end{proof}

\appendix
\section{Miscellaneous results on the conformal Hessian}\label{App:Misc}
We collect some results which can be used to give another proof of Lemma \ref{Lem:20VI20-KeyNew}.

\subsection{Convexity of the conformal Hessian}

Let $\mathring{g}$ denote the Euclidean metric on $\RR^n$. Then the $(1,1)$-Schouten tensor of the metric $g =  w^{-2} \mathring{g}$ is given by $w A_w$ with
\[
A_w := \nabla^2 w - \frac{1}{2w} |\nabla w|^2\,I.
\]

\begin{lemma}
\label{Lem:23VI20-L1}
Suppose that $0 < w_1, w_2 \in C^2(\Omega)$. Then
\[
A_{\frac{1}{2}(w_1 + w_2)} \geq \frac{1}{2} (A_{w_1} + A_{w_2}) \text{ in } \Omega.
\]
\end{lemma}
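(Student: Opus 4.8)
The plan is to reduce the claimed matrix inequality to a scalar, pointwise inequality, and then to recognize the latter as the joint convexity of the ``quadratic over linear'' function. Set $w := \frac{1}{2}(w_1+w_2)$ and work at a fixed but arbitrary point of $\Omega$. Since $w \mapsto \nabla^2 w$ is affine, the Hessian contributions cancel \emph{exactly}: $\nabla^2 w = \frac{1}{2}(\nabla^2 w_1 + \nabla^2 w_2)$. Hence
\[
A_w - \tfrac{1}{2}\bigl(A_{w_1} + A_{w_2}\bigr)
= \frac{1}{2}\left[\frac{1}{2}\left(\frac{|\nabla w_1|^2}{w_1} + \frac{|\nabla w_2|^2}{w_2}\right) - \frac{|\nabla w|^2}{w}\right] I,
\]
so that, since $I > 0$, it suffices to prove the scalar inequality
\[
\frac{|\nabla w|^2}{w} \le \frac{1}{2}\left(\frac{|\nabla w_1|^2}{w_1} + \frac{|\nabla w_2|^2}{w_2}\right) \text{ at every point of } \Omega.
\]

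For the scalar inequality, I would fix a point and abbreviate $v_j = \nabla w_j$, $s_j = w_j > 0$, so that $\nabla w = \frac{1}{2}(v_1+v_2)$ and $w = \frac{1}{2}(s_1+s_2)$. The inequality to be shown becomes
\[
\frac{|v_1+v_2|^2}{s_1+s_2} \le \frac{|v_1|^2}{s_1} + \frac{|v_2|^2}{s_2},
\]
which follows at once from the algebraic identity
\[
\left(\frac{|v_1|^2}{s_1} + \frac{|v_2|^2}{s_2}\right)(s_1+s_2) - |v_1+v_2|^2 = \frac{|s_2 v_1 - s_1 v_2|^2}{s_1 s_2} \ge 0,
\]
valid for $s_1, s_2 > 0$ and $v_1, v_2 \in \RR^n$; this is precisely the convexity of the perspective function $(v,s) \mapsto |v|^2/s$ on $\RR^n \times (0,\infty)$, of which one may alternatively recall the proof via $|v|^2/s = \sup_{u \in \RR^n}\bigl(2u\cdot v - s|u|^2\bigr)$, a supremum of affine functions. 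Combining the displays and recalling that the point was arbitrary yields $A_w \ge \frac{1}{2}(A_{w_1}+A_{w_2})$ in $\Omega$.

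There is no substantial obstacle here: the only point to be careful about is that one should \emph{not} attempt to use any convexity property of $w \mapsto \nabla^2 w$ (none is needed, and none beyond affineness holds), and instead let the Hessian terms cancel identically, which isolates the genuine content in the elementary Cauchy--Schwarz-type estimate above. Everything is pointwise, so the $C^2$ hypothesis is used only to make $A_{w_j}$ well defined.
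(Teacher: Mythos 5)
Your proof is correct and follows essentially the same route as the paper: after the Hessian terms combine linearly, the remaining multiple of $I$ reduces to the scalar inequality $(s_1+s_2)\bigl(|v_1|^2/s_1 + |v_2|^2/s_2\bigr) \ge |v_1+v_2|^2$, which the paper cites as Cauchy--Schwarz and you verify by the explicit square identity (equivalently, convexity of the perspective function). No gap; nothing further is needed.
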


\begin{proof}
Let $\bar w = \frac{1}{2}(w_1 + w_2)$. We have 
\begin{align*}
A_{\bar w} &= \frac{1}{2} (\nabla^2 w_1 + \nabla^2 w_2) -  \frac{1}{8\bar w}\big|\nabla w_1 + \nabla w_2\big|^2 I\\
	 &= \frac{1}{2} (A_{w_1} + A_{w_2}) 
	  	+ \frac{1}{8\bar w}
			\Big\{(w_1 + w_2) \Big(\frac{1}{w_1} |\nabla w_1|^2 + \frac{1}{w_2}|\nabla w_2|^2\Big)
				 - \big|\nabla w_1 + \nabla w_2\big|^2 \Big\}I.
\end{align*}
As the term in the curly braces is non-negative thanks to Cauchy-Schwarz' inequality, the conclusion follows.
\end{proof}

\subsection{A monotonicity estimate}

\begin{lemma}\label{lihaotian}
Let $n\geq 3$ and $n/2<k\leq n$. Suppose $u$ is a $C^{2}$ positive radially symmetric function satisfying $\lambda(A^{u})\in\bar{\Gamma}_{k}$ in $B_{2}$. Suppose further that $r^{\frac{n-2}{2}}u(r)$ is non-increasing on $[r_{1},r_{2}]\subset(0,2)$. Then
\begin{equation}\label{yuxuanyang}
1\leq\frac{r_{2}^{n-2}u(r_{2})}{r_{1}^{n-2}u(r_{1})}\leq 2^{\frac{(n-2)k}{2k-n}}.
\end{equation}
\end{lemma}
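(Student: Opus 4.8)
The plan is to reduce the statement to a one‑dimensional ODE analysis for the logarithmic derivative of $u$. Writing $t=\ln r$ and $\rho=\rho(r):=r\,u'(r)/u(r)=\tfrac{d}{dt}\ln u$, a direct computation from \eqref{Eq:AuDef} — using that for radial $u$ the Euclidean Hessian $\nabla^2u$ has eigenvalue $u''$ in the radial direction and $u'/r$ with multiplicity $n-1$ in the tangential directions, while $du\otimes du$ has eigenvalue $(u')^2$ in the radial direction and $0$ in the tangential ones — shows that the eigenvalues of $A^u$ are
\[
\lambda_1 \;=\; c\,\bigl[\rho^2 + (n-2)\rho - (n-2)\,r\rho'\bigr],
\qquad
\lambda_2 \;=\; -\,c\,\rho\,(\rho + n-2),
\]
with $\lambda_2$ of multiplicity $n-1$ and $c=c(r):=\tfrac{2}{(n-2)^2}\,u^{-\frac{4}{n-2}}\,r^{-2}>0$. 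Since $r\rho'=\dot\rho:=\tfrac{d}{dt}\rho$, this pair of identities encodes a first order ODE for $\rho$.

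First I would extract pointwise information from $\lambda(A^u)\in\bar\Gamma_k$. Because $n\ge 3$ and $k>n/2$ we have $k\ge 2$, hence $\sigma_1(\lambda(A^u))\ge 0$ and $\sigma_2(\lambda(A^u))\ge 0$. If $\lambda_2<0$ somewhere, then $\sigma_1=\lambda_1+(n-1)\lambda_2\ge 0$ forces $\lambda_1>0$, whence $\sigma_2=(n-1)\lambda_2\bigl(\tfrac{n-2}{2}\lambda_2+\lambda_1\bigr)<0$, a contradiction; so $\lambda_2\ge 0$ everywhere, which by the formula above is equivalent to $\rho\in[-(n-2),0]$. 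Next, writing $\sigma_k(\lambda_1,\lambda_2,\dots,\lambda_2)=\lambda_2^{\,k-1}\bigl[\binom{n-1}{k}\lambda_2+\binom{n-1}{k-1}\lambda_1\bigr]$, the condition $\sigma_k\ge 0$ gives $\lambda_1+\tfrac{n-k}{k}\lambda_2\ge 0$ (by dividing where $\lambda_2>0$, and directly from $\sigma_1=\lambda_1\ge 0$ where $\lambda_2=0$). Substituting the formulas for $\lambda_1,\lambda_2$ and using $1-\tfrac{n-k}{k}=\tfrac{2k-n}{k}>0$, this becomes the differential inequality
\[
\dot\rho \;\le\; \frac{2k-n}{k(n-2)}\,\rho\,(\rho+n-2) \qquad\text{on }(0,2).
\]

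Now I would bring in the hypothesis. Since $\tfrac{d}{dt}\ln\bigl(r^{\frac{n-2}{2}}u\bigr)=\tfrac{n-2}{2}+\rho$, the assumption that $r^{\frac{n-2}{2}}u$ is non‑increasing on $[r_1,r_2]$ means $\rho\le-\tfrac{n-2}{2}$ there. Set $\phi(t):=(n-2)+\rho$, so $\phi\in[0,\tfrac{n-2}{2}]$ on $[\ln r_1,\ln r_2]$ (the lower bound being $\lambda_2\ge 0$) and in particular $n-2-\phi\ge\tfrac{n-2}{2}>0$ there. Because $\tfrac{d}{dt}\ln\bigl(r^{n-2}u\bigr)=(n-2)+\rho=\phi$, we have $\ln\dfrac{r_2^{\,n-2}u(r_2)}{r_1^{\,n-2}u(r_1)}=\int_{\ln r_1}^{\ln r_2}\phi\,dt$, and $\phi\ge 0$ immediately yields the lower bound in \eqref{yuxuanyang}. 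For the upper bound, rewrite the differential inequality (using $\rho(\rho+n-2)=-\phi(n-2-\phi)$ and $\dot\phi=\dot\rho$) as $\dot\phi\le-\beta\,\phi\,(n-2-\phi)$ with $\beta:=\tfrac{2k-n}{k(n-2)}$; dividing by $n-2-\phi>0$ gives $\phi\le\tfrac1\beta\,\tfrac{d}{dt}\ln(n-2-\phi)$. Integrating over $[\ln r_1,\ln r_2]$ and using $\phi(\ln r_2)\ge 0$ and $\phi(\ln r_1)\le\tfrac{n-2}{2}$,
\[
\int_{\ln r_1}^{\ln r_2}\phi\,dt \;\le\; \frac1\beta\ln\frac{n-2-\phi(\ln r_2)}{n-2-\phi(\ln r_1)} \;\le\; \frac1\beta\ln\frac{n-2}{(n-2)/2} \;=\; \frac{k(n-2)}{2k-n}\,\ln 2,
\]
which exponentiates to the upper bound in \eqref{yuxuanyang}.

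The one genuinely delicate point is deriving the differential inequality for $\rho$ from $\sigma_k\ge 0$: on the degenerate locus $\{\lambda_2=0\}=\{\rho=-(n-2)\}$ one cannot divide by $\lambda_2^{\,k-1}$, but there $\phi=0$ so $-\beta\phi(n-2-\phi)=0$, while $\dot\phi\le 0$ still follows from $\sigma_1(\lambda(A^u))=\lambda_1\ge 0$; thus $\dot\phi\le-\beta\phi(n-2-\phi)$ holds at every point of $(0,2)$. Everything else is bookkeeping: the radial eigenvalue computation, and the standard facts that $\bar\Gamma_k\subseteq\bar\Gamma_j$ and $\sigma_j\ge 0$ on $\bar\Gamma_j$ for $1\le j\le k$.
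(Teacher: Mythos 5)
Your proof is correct and is essentially the paper's argument written in different variables: after the same reduction to a radial ODE in $t=\ln r$, your constraints $\rho\in[-(n-2),0]$ and $\dot\rho\le\frac{2k-n}{k(n-2)}\,\rho(\rho+n-2)$ are exactly the paper's conditions $|\xi'|\le 1$ (from $\sigma_1,\sigma_2\ge0$) and $\sigma_k\ge 0$ in cylindrical coordinates under the substitution $\xi'=-\frac{2}{n-2}\rho-1$, and they produce the lower and upper bounds with the same constant $2^{\frac{(n-2)k}{2k-n}}$. The only real differences are cosmetic — the paper integrates its inequality via monotonicity of $e^{(2k-n)\xi}(1-|\xi'|^2)^k$ and a $\cosh$ substitution, while you divide the Riccati inequality by $n-2-\phi>0$ and integrate $\ln(n-2-\phi)$ directly, which is a bit more elementary — and one line should be tightened: in the $\sigma_2<0$ contradiction, conclude $\frac{n-2}{2}\lambda_2+\lambda_1>0$ from the quantitative consequence $\lambda_1\ge-(n-1)\lambda_2$ of $\sigma_1\ge0$ (knowing merely $\lambda_1>0$ with $\lambda_2<0$ would not suffice).
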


\begin{proof} 
As in \cite{ChangHanYang05-JDE, Viac00-Duke}, we consider the differential inclusion $\lambda(A^u) \in \bar\Gamma_k$ using cylindrical metric. Let $t = \ln r$, $t_1 = \ln r_1$, $t_2 = \ln r_2$, and $\xi(t) = -\frac{2}{n-2}\ln u(r) - \ln r$ so that $u^{\frac{4}{n-2}}\mathring{g} = e^{-2\xi} (dt^2 + g_{\mathbb{S}^{n-1}})$. A direct computation gives
\[
\sigma_\ell(\lambda(A^u))
	= \frac{1}{2^{\ell-1}}\,\Big(\begin{array}{c} n-1\\ \ell-1\end{array}\Big)\,e^{2\ell\xi}(1 - |\xi'|^2)^{\ell-1}[\xi'' + \frac{n-2\ell}{2\ell}(1-|\xi'|^2)],
\]
where here and below $'$ denotes differentiation with respect to $t$. As $k \geq 2$, we have $\sigma_\ell(\lambda(A^u)) \geq 0$ for $\ell = 1,2$ and so $1 - |\xi'|^2 > 0$. This implies that $r^{n-2} u(r)$ is non-decreasing (and $u(r)$ is non-increasing). This proves the left half of (\ref{yuxuanyang}). (See also \cite[Lemma 2.7]{LiNgBocher}.)

As $r^{\frac{n-2}{2}} u(r)$ is non-increasing, $\xi' \geq 0$. Hence, as $\sigma_k(\lambda(A^u))\geq 0$,
\[
H(\xi,\xi') := e^{(2k-n)\xi}(1 - |\xi'|^2)^k \text{ is non-increasing.}
\]
It follows that 
\[
e^{\frac{(2k-n)\xi}{k}}(1 - |\xi'|^2) \leq H(\xi(t_1), \xi'(t_1))^{\frac{1}{k}} \leq e^{\frac{(2k-n)\xi(t_1)}{k}}.
\]
Letting $e^{\frac{(2k-n)(\xi - \xi(t_1))}{2k}} = \cosh \eta$, we then get $\eta' \geq \frac{2k - n}{2k}$. This gives
\begin{align*}
\xi(t_2) - \xi(t_1) 
	&=\frac{2k}{2k-n} \log\cosh \eta(t_2) \geq \frac{2k}{2k-n} \log\cosh[\frac{2k - n}{2k}(t_2 - t_1) ] \\
	&\geq t_2 - t_1 - \frac{2k}{2k-n}\ln 2.
\end{align*}
The right half of (\ref{yuxuanyang}) follows.
\end{proof}

To illustrate, let us apply the above result to give a proof of \eqref{zhongyao} when $k > n/2$. By \eqref{as}, we have 
\begin{equation}\label{ad}
\mbox{$r^{\frac{n-2}{2}}\hat{u}_{i}(r)$ has a unique critical (maximum) point in $(0,R_i/\lambda_i)$},
\end{equation}
where
\[
\hat{u}_{i}(r)
	=\left(\frac{1}{|\partial B_{r}(y_{i})|}\int_{\partial B_{r}(y_{i})}u_{i}^{-\frac{2}{n-2}}\right)^{-\frac{n-2}{2}}.
\]
It is a fact that \eqref{ad} can be improved to
\begin{equation}\label{adX}
\mbox{$r^{\frac{n-2}{2}}\hat{u}_{i}(r)$ has a unique critical (maximum) point in $(0,\rho)$}
\end{equation}
for some $\rho > 0$ independent of $i$. (We decide to skip the proof of this for brevity.)

\begin{proof}[An alternative proof of Lemma \ref{Lem:20VI20-KeyNew} when $k > n/2$ and when \eqref{adX} holds]
 By (\ref{genqian}), the definition of $\hat{u}_{i}$ and the convexity of the operator $A_{w}$ (see Lemma \ref{Lem:23VI20-L1}), $\hat{u}_{i}$ is a $C^{2}$ positive radially symmetric function satisfying $
\lambda(A^{\hat{u}_{i}})\in\bar{\Gamma}_{k}$ on $\RR^{n}$. 
By (\ref{ad}) and \eqref{adX}, we have that $r^{\frac{n-2}{2}}\hat{u}_{i}(r)$ is non-increasing on $[R_i/\lambda_i,\rho]$. Without loss of generality, we may assume that $\rho<1$. It follows from Lemma \ref{lihaotian} and (\ref{as}) that 
\begin{equation*}
r^{n-2}\hat{u}_{i}(r)\leq CR_i^{n-2}\lambda_i^{-(n-2)}\hat{u}_{i}(R_{i}/\lambda_i)\leq Cu_{i}^{-1}(0),~~\forall~R_{i}/\lambda_i \leq r\leq\rho.
\end{equation*}
Estimate (\ref{zhongyao}) then follows in view of Harnack estimates (cf. \eqref{Eq:20VI20-H2}).
\end{proof}

\newcommand{\noopsort}[1]{}

\end{document}